\newtheorem{thm}{Theorem}[section]
\newtheorem{lem}[thm]{Lemma}
\newtheorem{prop}[thm]{Proposition}
\newtheorem{coro}[thm]{Corollary}
\newtheorem{Def}[thm]{Definition}
\newtheorem{rem}[thm]{Remark}
\tikzset{courbe/.append style={scale=0.7}}
\def\ds{\displaystyle}
\def\R{\mathbb{R}}
\def\be{\begin{equation}}
\def\ee{\end{equation}}
\def\n{{\boldsymbol n}}
\def\p{\partial}
\def\grad{\boldsymbol{\nabla}}
\def\div{\grad\cdot}
\def\O{\Omega}
\def\a{\alpha}
\def\eps{\epsilon}
\def\x{{\boldsymbol x}}
\def\bt{{\boldsymbol t}}
\def\d{{\rm d}}
\def\ov#1{\overline{#1}}
\def\wh#1{\widehat{#1}}
\def\wt#1{\widetilde{#1}}
\def\bphi{\boldsymbol{\phi}}
\def\bvarphi{\boldsymbol{\varphi}}
\def\bJ{{\boldsymbol  J}}
\def\bV{{\boldsymbol  V}}
\def\bW{{\boldsymbol  W}}
\def\bv{{\boldsymbol v}}
\def\by{{\boldsymbol y}}
\def\bw{{\boldsymbol w}}
\def\0{{\bf 0}}
\def\1{{\bf 1}}
\def\Aa{\mathcal{A}}
\def\bAa{\boldsymbol{\Aa}}
\def\Dd{\mathcal{D}}
\def\Ee{\mathcal{E}}
\def\Ff{\mathcal{F}}
\def\Hh{\mathcal{H}}
\def\Xx{\mathcal{X}}
\def\bXx{{\boldsymbol{\Xx}}}
\def\Zz{\mathcal{Z}}
\def\bZz{\boldsymbol{\mathcal{Z}}}
\def\Ddd{\mathfrak{D}}
\def\dt{{\Delta t}}
\def\bc{{\boldsymbol{c}}}
\def\bmu{{\boldsymbol{\mu}}}
\begin{document}

\title[A degenerate Cahn-Hilliard model as constrained Wasserstein gradient flow]{A two-phase two-fluxes degenerate Cahn-Hilliard model as constrained Wasserstein gradient flow}
\author{Cl\'ement Canc\`es}
\address{Inria, Univ. Lille, CNRS, UMR 8524 - Laboratoire Paul Painlev\'e, F-59000 Lille (\href{mailto:clement.cances@inria.fr}{\tt clement.cances@inria.fr})}
\author{Daniel Matthes}
\address{Zentrum f\"ur Mathematik, Technische Universit\"at M\"unchen, 85747 Garching, Germany 
(\href{mailto:matthes@ma.tum.de}{\tt matthes@ma.tum.de})}
\author{Flore Nabet}
\address{CMAP, Centre de Math\'ematiques Appliqu\'ees, \'Ecole Polytechnique, Route de Saclay, 91128 Palaiseau Cedex, France (\href{mailto:flore.nabet@polytechnique.edu}{\tt flore.nabet@polytechnique.edu})}

\begin{abstract}
        We study a non-local version of the Cahn-Hilliard dynamics for phase separation 
in a two-component incompressible and immiscible mixture with linear mobilities. 
   In difference to the celebrated local model with nonlinear mobility,
   it is only assumed that the divergences of the two fluxes --- but not necessarily the fluxes themselves --- annihilate each other.
   Our main result is a rigorous proof of existence of weak solutions.
   The starting point is the formal representation of the dynamics as a constrained gradient flow in the Wasserstein metric.
   We then show that time-discrete approximations by means of the incremental minimizing movement scheme
   converge to a weak solution in the limit.
   Further, we compare the non-local model to the classical Cahn-Hilliard model in numerical experiments.
   Our results illustrate the significant speed-up in the decay of the free energy due to the higher degree of freedom for the velocity fields.
\end{abstract}

\keywords{Multiphase flow, Cahn-Hilliard type system, constrained Wasserstein gradient flow}
\subjclass{35K65, 35K41, 49J40, 76T99}

\maketitle

\section{Motivation and presentation of the model}

\subsection{Introduction} 
 {We are interested in a non-local Cahn-Hilliard system
\begin{subequations}
  \label{eq:syst-PDE2}
  \begin{align}
    \ds \p_t c_i - \div\left( {m_i c_i} \grad \left(\mu_i + \Psi_i\right)\right) \label{eq:1a}
    &= m_i \theta_i \Delta c_i \quad \text{for}\; i \in \{1,2\},\\
    c_1 + c_2 &= 1,  \label{eq:1b} \\
    \mu_1 - \mu_2 &= -\alpha \Delta c_1 + \chi (1-2c_1),
  \end{align}  
\end{subequations}
modelling the flow of an incompressible and immiscible mixture in a bounded convex domain $\O$.
Above, $m_1$ and $m_2$ are positive mobility constants,
$\chi>0$ is a parameter for the chemical activity,
and the non-negative numbers $\theta_1$ and $\theta_2$ control the degree of thermal agitation.
The system is complemented with initial conditions
\be\label{eq:initial}
(c_i)_{|_{t=0}} = c_i^0 \in H^1(\O) \; \text{with}\; c_i^0 \ge 0 \; \text{and}\; c_1^0 + c_2^0 = 1 \; \text{in}\; \O, 
\ee
and boundary conditions
\be\label{eq:BC}
c_i \grad (\mu_i + \Psi_i) \cdot \n = \grad c_i\cdot \n= 0 \; \text{on}\; \p\O \times (0,\infty).
\ee}

 {
Equation~\eqref{eq:1a} can be rewritten as a conservation law 
\be\label{eq:cons_i_intro}
\p_t c_i + \div\bJ_i = 0, 
\ee
where
\[
\bJ_i = - {m_i c_i} \grad \left(\mu_i + \Psi_i + \theta_i \log(c_i)\right).
\]
Summing \eqref{eq:cons_i_intro} over $i\in \{1,2\}$ and using~\eqref{eq:1b} yields
\[
\div \bJ_{\rm tot} = 0 \quad\text{where}\quad \bJ_{\rm tot} = \bJ_1 + \bJ_2
\]
denotes the total flux. In particular, we do not require that  $\bJ_{\rm tot} = \0$ as for the 
classical (or local) degenerate Cahn-Hilliard model~\cite{deGennes80,Gurtin96,EG96}. 
The relaxation of the constraint from vanishing total flux
to divergence free total flux was initially proposed by E and Palffy-Muhoray in~\cite{EPM97} and 
studied formally by Otto and E in~\cite{OE97}. It was in particular noticed in~\cite{OE97}  that 
the system~\eqref{eq:syst-PDE2} can be interpreted as the constrained Wasserstein gradient flow 
of some Ginzburg-Landau energy where the velocity field $\bV = (\bv_1,\bv_2)$ transporting 
the concentrations $\bc = (c_1,c_2)$ has to preserve the constraint~\eqref{eq:1b}.
}

The nonlocal model is derived formally in Section~\ref{ssec:formal}, then compared to the classical (or local)
degenerate Cahn-Hilliard model in Section~\ref{ssec:comparison}. Numerical illustrations of its behavior
are given in Section~\ref{ssec:num}. In Section 2, we introduce the necessary material to prove our main result, 
that is the global existence of a weak solution to the model~\eqref{eq:syst-PDE2}--\eqref{eq:BC}. 
This existence result is obtained by showing the convergence of a minimizing movement scheme {\em \`a la} 
Jordan, Kinderlehrer, and Otto~\cite{JKO98}. Section 3 is devoted to the proof of the convergence of the minimizing 
movement scheme.

\subsection{Derivation of the model}\label{ssec:formal}

We consider an incompressible mixture composed of two phases flowing within 
a  {bounded} open convex subset $\O$ of $\R^d$ with $d \leq 3$.
The fluid is incompressible, so its composition at time $t\ge 0$ is fully 
described by the saturations $c_i(\x,t) \in [0,1]$, $i \in \{1,2\}$, i.e., the volume ratio 
of the phase $i$ in the fluid. This leads to the constraint 
\be\label{eq:c1+c2}
c_1  + c_2 = 1 \qquad \text{in}\; \O \times (0,\infty).
\ee
 {We assume that this constraint is already satisfied at the initial time $t=0$}, 
where the composition of the mixture is given by $\bc^0 = (c_1^0, c_2^0): \O \to [0,1]^2$, 
 {i.e.}
\be\label{eq:c10+c20}
c_1^0  + c_2^0 = 1 \qquad \text{in}\; \O.
\ee
 {We further assume that both phases have positive mass.}

The motions of each phase is governed by a linear transport 
equation
\be\label{eq:cons-i}
\p_t c_i + \div(c_i \bv_i) = 0 \quad \text{in } \O \times (0,\infty), 
\ee
where $\bv_i : \O \times \R_+ \to \R^d$ denotes the speed of the phase $i$, and $\bV = (\bv_1, \bv_2)$. 
 {To enforce mass conservation,
the boundary $\p\O$ of $\O$ is assumed to be impervious, hence}
\begin{align}
  \label{eq:imper}
  c_1\bv_1\cdot\n = c_2\bv_2\cdot\n = 0 \quad \text{on $\p\O$},
\end{align}
where $\n$ denotes the outward normal to $\p\O$,
 {so that Gauss' theorem implies}
\be\label{eq:m_i}
\int_\O c_i(\x,t) \d\x = \int_\O c_i^0(\x) \d\x = |\O| \ov c_i, \qquad t\ge 0, \; i \in \{1,2\}. 
\ee
Consequently,
at each time $t\ge0$, the saturations $\bc(t)$ belong to 
the set $\bAa = \Aa_1 \times \Aa_2$,
where
$$
\Aa_i = 
\left\{\text{ {$c_i:\O\to[0,1]$ measurable}}\; \middle| \; \int_\O c_i(\x) \d\x= |\O| \ov c_i \right\}, \qquad i \in \{1,2\}.
$$

To each configuration $\bc \in \bAa$, we associate the energy 
$$
\Ee(\bc) = \Ee_{\rm Dir}(\bc) + \Ee_{\rm chem}(\bc) + \Ee_{\rm therm}(\bc)+  \Ee_{\rm cons}(\bc) 
+ \Ee_{\rm ext}(\bc),
$$
 {whose components are as follows.}
\begin{itemize}
\item \textbf{Dirichlet energy:}
   {for given $\alpha_1,\alpha_2>0$},
  $$
  \Ee_{\rm Dir}(\bc) := \begin{cases} 
    \ds \sum_{i\in \{1,2\}} \frac{\alpha_i }2 \int_\O |\grad c_i|^2 \d\x & \text{if}\; \bc \in H^1(\O)^2, \\
    +\infty & \text{otherwise},
  \end{cases}
  $$
  penalizes the spatial variations of the saturation profiles. 
\item \textbf{Chemical energy:} 
   {for a given $\chi>0$},
  $$
  \Ee_{\rm chem}(\bc) = \chi \int_\O c_1 c_2 \d\x
  $$
  measures the impurity of the mixture.
\item \textbf{Thermal energy:}
   {for given $\theta_1,\theta_2\ge0$},
  $$
  \Ee_{\rm therm}(\bc) = \sum_{i\in\{1,2\}} {\theta_i} \int_\O H(c_i) \d\x, 
  \quad\text{with}\quad 
  H(c) = c \log(c) - c + 1 \ge 0, 
  $$
  is the free thermal energy.
  The case $\theta_1 = \theta_2 = 0$ is called the deep quench limit \cite{CENC96}. 
\item \textbf{Constraint:}
  $$
  \Ee_{\rm cons}(\bc) = \int_\O E_{\rm cons}(\bc) \d\x, \quad \text{where}\quad 
  E_{\rm cons}(\bc) = \begin{cases}
    0 & \text{if}\; c_1 + c_2 = 1,\\
    +\infty & \text{otherwise},
  \end{cases}
  $$
  realizes the constraint~\eqref{eq:c1+c2}.
\item \textbf{Exterior potential:}
   {for given external potentials $\Psi_1,\Psi_2\in  {H^{1}(\O)}$},
  $$
  \Ee_{\rm ext}(\bc) = \sum_{i\in\{1,2\}} \int_\O c_i(\x) \Psi_i(\x) \d\x
  $$
  is the potential energy related to exterior forces like gravity or electrostatic forces. 
\end{itemize}
All of these components are convex in $\bc$ \emph{except} for the chemical energy $\Ee_{\rm chem}$,
which, however, is smooth.
 {Denote by 
  $$\bXx = \left\{\bc\in H^1(\O; [0,1])^2 \,\middle| \, c_1 + c_2 = 1 \; \text{a.e. in}\; \O\right\}$$
  the domain of $\Ee$, i.e., 
  \be\label{eq:bXx}
  \Ee(\bc) < \infty \quad \Leftrightarrow \quad \bc \in \bXx. 
  \ee
}

 {Before entering into the rigorous derivation of the PDEs that govern the
gradient flow of the energy $\Ee(\bc)$ with respect to a tensorized Wasserstein distance,
we provide formal calculations based on}
the framework of generalized gradient flows of~\cite{Mie11, Pel-lecture, CGM15} 
in order to identify the underlying PDEs. 

The motion of the phases induces the viscous dissipation
\be\label{eq:Ddd}
\Ddd(\bc, \bV) = \sum_{i\in\{1,2\}}\frac1{2m_i} \int_\O c_i |\bv_i|^2 \d\x, 
\qquad \forall \bc \in \bAa, \; \forall  {\bV \in \bZz(\bc)}, 
\ee
where $m_i$ is the mobility coefficient of the $i$th phase, and where 
\[
\bZz(\bc) = \left\{ \bV=(\bv_1,\bv_2) : \O \to \left(\R^d\right)^2 \; \middle| \; c_i \bv_i \cdot \n = 0 \; \text{on}\; \p\O \right\} 
\]
denotes the space of the admissible (but unconstrained) vector fields.
 {This quantity is closely related to the tensorized Wasserstein distance through 
Benamou-Brenier formula~\cite{BB00}, see~\eqref{eq:Kanto-grad} later on.}
We suppose as in~\cite{CGM15} that at each time $t \ge 0$, 
the phase speeds $\bV = \left(\bv_1, \bv_2\right)$ is selected by the following 
\emph{steepest descent condition}:
\be\label{eq:varprob}
\bV \in \underset{\widetilde \bV=(\wt \bv_1,\wt \bv_2) \in \Zz(\bc)}{\text{argmin}}\left(\Ddd(\bc, \wt \bV) + \sup_{\bw \in \p\Ee(\bc)} 
 \sum_{i\in\{1,2\}} \int_\O c_i \wt \bv_i \cdot \grad w_i \d\x \right),
\ee
 {where $\Ee$'s subdifferential
\begin{align*}
  \p\Ee(\bc)= \left\{  {\bw\in L^2(\O;\R^d)^2} \middle| \Ee(\wh \bc) - \Ee(\bc) - 
  \int_\O \bw \cdot (\wh \bc - \bc) \d\x  \ge o\left(\|\wh \bc-\bc\|_{L^2(\O)}\right) \right\}
\end{align*}
 {is non-empty at each $\bc\in\bXx$ with $c_1\in H^2(\O)$ and $f(c_1)\in L^2(\O)$,
  and there, its elements $\bw=(w_1,w_2)$ are characterized by (c.f.~\cite{CGM15})}
\be\label{eq:dw}
w_1  - w_2 = 
- \alpha \Delta c_1 + \chi (1-2c_1)+ f(c_1) + (\Psi_1 - \Psi_2).
\ee
In formula~\eqref{eq:dw}, we have set $\alpha = \alpha_1 + \alpha_2$ and 
\be\label{eq:f}
f(c_1) = \log\left( \frac{c_1^{\theta_1}}{(1-c_1)^{\theta_2}}\right) = \theta_1 \log(c_1) - \theta_2 \log(c_2).
\ee
}

 {The integral in~\eqref{eq:varprob} has to be understood as the evaluation of the element $\bw=(w_1,w_2)$ in $\Ee$'s subdifferential at $\bc$
  on the infinitesimal changes $\p_t c_i=-\div(c_i \bv_i)$ induced by $\bV$.
  Since condition \eqref{eq:dw} only determines the difference $w_1-w_2$, 
  but not the $w_i$ individually, is easily seen that the maximum above is $+\infty$
  unless $\nabla\cdot(c_1\bv_1+c_2\bv_2)=0$ holds.
  That implies that any minimizer $\bV$ is such that the constraint \eqref{eq:c1+c2} is preserved.
}

 {In order to identify $\bV$, we swap minimization and maximization in the variational problem \eqref{eq:varprob}.
  The inner minimization is a quadratic problem in $\bV$, with solution}
\be\label{eq:v_i}
\bv_i = - {m_i} \grad w_i \quad \text{on}\; \{c_i >0\}\; \text{for}\; i \in \{1,2\}.
\ee
 {
  The outer maximization in then a quadratic problem for $\bw\in\p\Ee(\bc)$,
  which amounts to
}
\begin{align}
  \label{eq:elliptic}
  - \div\left( \sum_{i\in\{1,2\}} {m_i c_i} \grad w_i \right) = 0.  
\end{align}
 {
  Together with the condition \eqref{eq:dw}, this elliptic equation determines $\bw$ uniquely.
}

To sum up, the system of partial differential equations  {implied on the solution to
  the variational problem \eqref{eq:varprob}} is 
\begin{subequations}
  \label{eq:syst-PDE}
  \begin{align}
    \label{eq:syst1}
    \ds \p_t c_i - \div\left( {m_i c_i} \grad w_i\right) &= 0 \quad \text{for}\; i \in \{1,2\},\\
    \label{eq:syst2}
    c_1 + c_2 &= 1,  \\
    \label{eq:syst3}
    w_1 - w_2 &= -\alpha \Delta c_1 + \chi (1-2c_1)+ f(c_1) + (\Psi_1 - \Psi_2), 
    \end{align}
\end{subequations}
to be satisfied in $\O \times (0,\infty)$.
 {The system relation~\eqref{eq:syst-PDE} is complemented 
  by homogeneous Neumann boundary conditions}
\be\label{eq:dcdn0}
-\grad c_1 \cdot \n = 0 \quad \text{on}\; \p\O \times \R_+
\ee
 {and the no-flux conditions \eqref{eq:imper}. 
  Notice that with these boundary conditions,
  the set \eqref{eq:syst-PDE} of equations is (formally) closed in the sense that 
  if $\bc$ is known at some instance of time,
  then the $w_i$ are uniquely determined (up to irrelevant global additive constants)
  by \eqref{eq:syst3} and by the elliptic equation \eqref{eq:elliptic} 
  that follows from adding \eqref{eq:syst1} for $i=1,2$ 
  in combination with the conservation $\p_t(c_1+c_2)=0$ implied by \eqref{eq:syst2}.
  Hence $\bc$'s time derivatives $\partial_tc_i$ are determined as well.}

 {At this point, it is natural to}
define the chemical potential $\mu_i$ of the phase $i$ by 
\be\label{eq:mu_i}
\mu_i = w_i - \theta_i \log(c_i) - \Psi_i,  \qquad i\in\{1,2\},
\ee
so that \eqref{eq:syst3} turns to 
\be\label{eq:dmu}
\mu_1  - \mu_2 = 
- \alpha \Delta c_1 + \chi (1-2c_1).
\ee
 {So far}, $\bmu {=(\mu_1,\mu_2)}$ is only defined up to an additive constant. This degree of freedom is eliminated by 
imposing for almost all $t>0$ that 
\be\label{eq:ov-mu}
\int_\O \ov \mu(\x,t) \d\x  = 0, \quad \text{where}\; \ov \mu = c_1 \mu_1 + c_2 \mu_2.
\ee
 {Whilst the phase chemical potential $\mu_i$ have a clear physical sense only on $\{c_i>0\}$, 
the global chemical potential $\ov \mu$ remains meaningful in the whole $\O$. In particular, its spatial variations can be controlled, and thus $\ov \mu$ itself
too thanks to a Poincar\'e-Wirtinger inequality.}

 {With the help of the chemical potentials, the system \eqref{eq:syst-PDE} turns into~\eqref{eq:syst-PDE2}, 
to be complemented with boundary conditions~\eqref{eq:BC} and initial conditions~\eqref{eq:initial}.}

In the next section, we highlight some differences between the non-local model~\eqref{eq:syst-PDE2} 
and the local degenerate Cahn-Hilliard model that has been studied for instance in~\cite{EG96}.

\subsection{Comparison with the classical degenerate Cahn-Hilliard model}\label{ssec:comparison}

Even in the simple situation where the external potentials $\Psi_i$ are equal to $0$ and where  $\theta_1 = \theta_2 =0$, 
the system~\eqref{eq:syst-PDE} differs as soon as $d\ge2$ from the local degenerate Cahn-Hilliard model that can be written as 
\be\label{eq:EG}
\p_t c - \div(\eta(c) \grad \mu) = 0, \qquad \mu = - \alpha \Delta c + \chi(1-2c), \qquad \eta(c) = \frac{m_1 m_2 c(1-c)}{m_1 c + m_2 (1-c)}.
\ee
We refer to \cite{EG96} for the existence of weak solutions to~\eqref{eq:EG} (complemented with suitable boundary conditions)
and to \cite{BBG01} for the extension of the model to 
the case of N phases ($N\ge3$). Here, $\mu$ is the generalized chemical potential that is defined as the difference of the phase chemical potentials.

The energy $\widetilde \Ee$ associated to~\eqref{eq:EG} is similar to the one of our problem, i.e., 
$$
\widetilde \Ee(c) = \frac{\alpha}2 \int_\O |\grad c|^2 \d\x + \chi \int_\O c(1-c) \d\x.
$$
But both the equation governing the motion~\eqref{eq:cons-i} and the dissipation~\eqref{eq:Ddd} have to be modified. 
More precisely, the continuity equation~\eqref{eq:cons-i} must be replaced by its nonlinear counterpart 
$$
\p_t c  + \div(\eta(c) \bv) = 0, 
$$
while the dissipation is now given by 
$$
\widetilde \Ddd(c,\bv) = \int_\O \eta(c) |\bv|^2 \d\x.
$$
Therefore, the PDEs~\eqref{eq:EG} can still be interpreted as the gradient flow of the energy $\widetilde \Ee$, but 
the geometry is different: rather than considering some classical quadratic Wasserstein distance for each phase and to constrain the 
sum of the concentrations to be equal to 1 (as it will be the case for our approach), the set
 $$\Aa = \left\{ c \in L^1(\O; \R_+) \; \middle|  \; \int_\O c(\x,t)\d\x = \int_\O c^0_1(\x) \d\x \right\}$$ 
has to be equipped with the weighted Wasserstein metric corresponding to the concave mobility $\eta$. 
We refer to~\cite{DNS09} for the description of the corresponding metric and to~\cite{LMS12} for the rigorous 
recovery of~\eqref{eq:EG} by a gradient flow approach.

The difference between the non-local model \eqref{eq:syst-PDE} and the local one~\eqref{eq:EG} can also be seen as follows. 
Summing the first equation of~\eqref{eq:syst-PDE} for $i\in\{1,2\}$ yields 
\be\label{eq:v_tot}
\div\bJ_{\rm tot} = 0, \quad \text{where}\quad 
\bJ_{\rm tot} = \sum_{i\in\{1,2\}} c_i \bv_i =  -\sum_{i\in\{1,2\}} m_i c_i \grad \mu_i. 
\ee
The equation for $c_1$ can then be rewritten 
\be\label{eq:v_tot_c1}
\p_t c_1 + \div\left( \rho(c_1) \bJ_{\rm tot} - \eta(c_1) \grad (\mu_{1}- \mu_2)\right)=0, 
\quad \text{with} \quad 
\rho(c) = \frac{m_1 c}{m_1 c + m_2 (1-c)}.
\ee
Thus our model~\eqref{eq:syst-PDE} boils down to the local Cahn-Hilliard equation as soon as $\bJ_{\rm tot} \equiv \0$. 
This is the case when $d=1$ because of~\eqref{eq:v_tot}, but no longer if $d\ge 2$. 
Since our non-local model does not impose that  $\bJ_1 = -\bJ_2$, it allows for additional motions.
These motions ---corresponding to the transport term $ \div\left( \rho(c_1) \bJ_{\rm tot}\right)$ in~\eqref{eq:v_tot_c1}--- contribute to the dissipation 
as shows the formula 
$$
\Ddd(\bc, -\grad \bmu) = \int_\O \frac{|\bJ_{\rm tot}|^2}{m_1 c_1 + m_2 c_2}\d\x + \widetilde \Ddd(c_1, -\grad (\mu_1 - \mu_2)).
$$
Therefore, and as already noticed by Otto and E in~\cite{OE97}, the instantaneous dissipation corresponding to 
a phase configuration $\bc$ is greater for the non-local model~\eqref{eq:syst-PDE} than for the local model~\eqref{eq:EG}
and the energy decreases faster.

\subsection{Numerical illustration}\label{ssec:num}
The goal of this section is to illustrate the behavior of the model~\eqref{eq:syst-PDE} and to compare it 
with the classical degenerate Cahn-Hilliard problem~\eqref{eq:EG}.
In order to solve numerically \eqref{eq:syst-PDE} we use an implicit in time finite volume scheme 
with upstream mobility described in~\cite{CN_FVCA} and inspired from the oil engineering context~\cite{EHM03}. 
The mesh is triangular and assumed to fulfill the so-called orthogonality condition~\cite{Herbin95,EGH00} 
(this amounts to requiring the mesh to be Delaunay) so that 
the diffusive fluxes can be approximated thanks to a two-point flux approximation in a consistent way. 
As it is exposed in~\cite{CN_FVCA}, the scheme 
is positivity preserving (i.e., $0 \leq c_{i,h} \leq 1$), it is energy diminishing (the discrete counterpart of the energy is decreasing)
and entropy stable. It leads to a nonlinear system of algebraic equations to be solved at each time step. 
It is shown in~\cite{CN_FVCA} that this system admits (at least) one solution that is computed thanks to 
the Newton-Raphson method.

Concerning the problem~\eqref{eq:EG}, we use a similar approach, but since the mobility function $\eta$ is 
no longer monotone, we have to use an implicit Godunov scheme to discretize it as a generalization of the upstream mobility 
(see for instance~\cite{CG16_MCOM}). Here again, the discrete solution remains bounded between $0$ and $1$, 
the energy is decreasing and the entropy remains bounded. Here again, the resulting nonlinear system 
is solved at each time step by the mean of the Newton-Raphson method.

\begin{rem}
Alternative numerical methods have been proposed in order to solve degenerate Cahn-Hilliard 
problems. We won't perform here an exhaustive list, but let us mention the contributions of Barrett {\em et al.} 
based on conformal finite elements \cite{BBG99, BBG01}. Even though very efficient, these methods 
have the drawback of requiring a small stabilization to be tuned following the mesh size. 
Unless one considers non-smooth energies as in \cite{BE92}, the scheme does not preserve the bounds 
$0 \leq c_{i,h} \leq 1$. These difficulties are overpassed in our approach 
by using some entropy stable hyperbolic fluxes to discretize the mobilities.  

Since our model has a Wasserstein gradient flow structure, it would be natural 
to use a Lagrangian method as for instance in~\cite{BCC08, MO17, JMO17, CDMM17}. The main problem with this approach 
is that both phase move with their own speed, therefore such an approach would impose to 
move two meshes simultaneously. It is then rather unclear how to manage the constraint~\eqref{eq:c1+c2} 
in this case. For this reason, it seems more suitable to stick to an Eulerian description. 
An alternative approach to solve numerically our problem would therefore be to 
adapt the ALG2-JKO algorithm of Benamou {\em et al.}~\cite{BCL16} to our setting {, see~\cite{CGLM_preprint}.}
\end{rem}

We propose two different test cases that will allow to illustrate the difference between the local model~\eqref{eq:EG} 
and the non-local model~\eqref{eq:syst-PDE}. For both of them, we do not consider any exterior potential, i.e., $\Psi_i = 0$, 
and we neglect the thermal diffusion, i.e., $\theta_i = 0$. Both phase mobilities $m_i$ are assumed to be equal to $1$.

\subsubsection{Test case 1: from a cross to a circle} 
We start from an initial data that is the characteristic function of a cross and we choose $\alpha = (3.6).10^{-4}$ and $\chi = 0.8$. 
Since $\alpha \ll \chi$, it follows from the Modica and Mortola' result ~\cite{MM80} that the free energy is close to the perimeter 
of a characteristic set (up to a multiplicative constant). This means that (up to a small regularization) both the local 
and the non-local Cahn-Hilliard models aim at minimizing the perimeter of the sets $\{c_1 = 0\}$ and $\{c_1 = 1\}$ 
corresponding to pure phases. Since the non-local model allows for more movements (cf. Section~\ref{ssec:comparison}), 
the energy (thus the perimeter) should decay faster for the nonlocal model. This is indeed what we observe on Figures~\ref{fig:cross_NRJ} and~\ref{fig:cross_snapshots}.

\begin{figure}[htb]
\centering
\includegraphics[width=7cm]{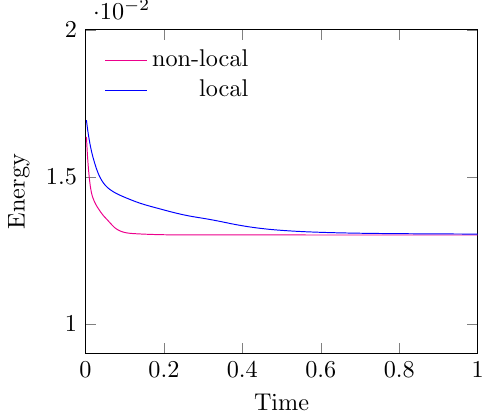}
  \caption{Evolution of the energy $\Ee(\bc)$ along time for the non-local model~\eqref{eq:syst-PDE} and the local one~\eqref{eq:EG}. 
  The decay of the energy is faster for the non-local model, as shown in Section~\ref{ssec:comparison}.}
  \label{fig:cross_NRJ}
 \end{figure}

\begin{figure}[htb]
\centering
\includegraphics[width = 4cm]{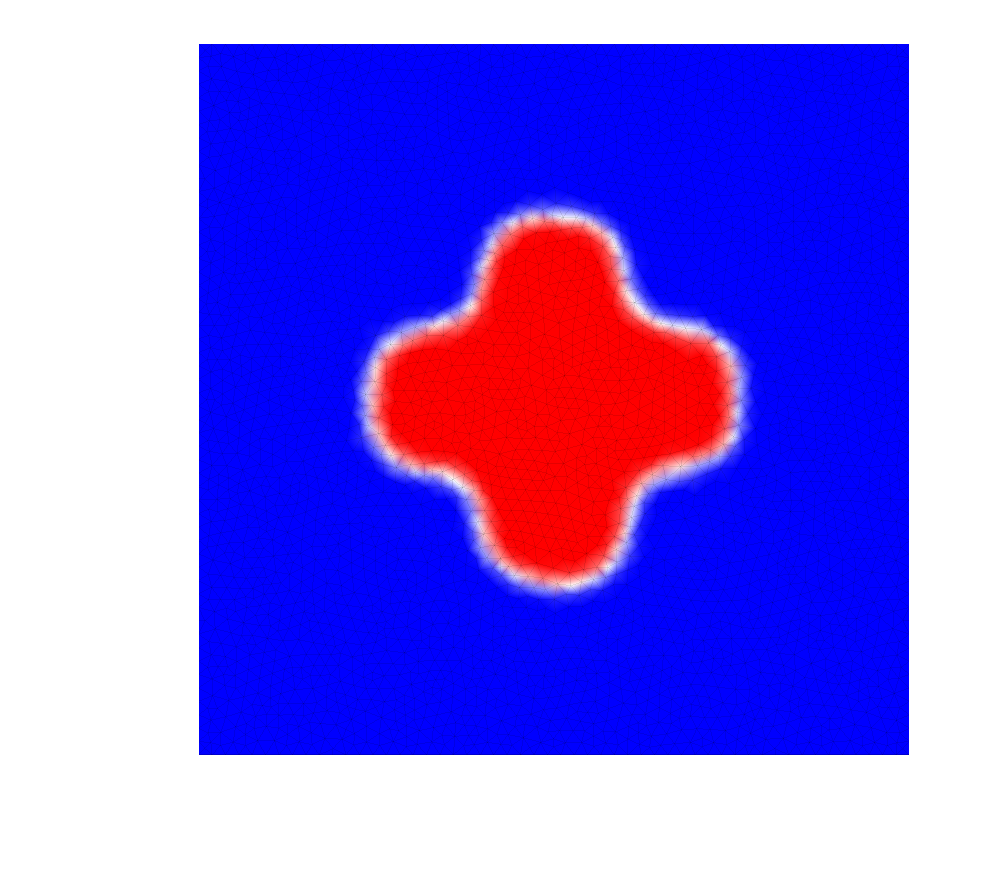}
\includegraphics[width = 4cm]{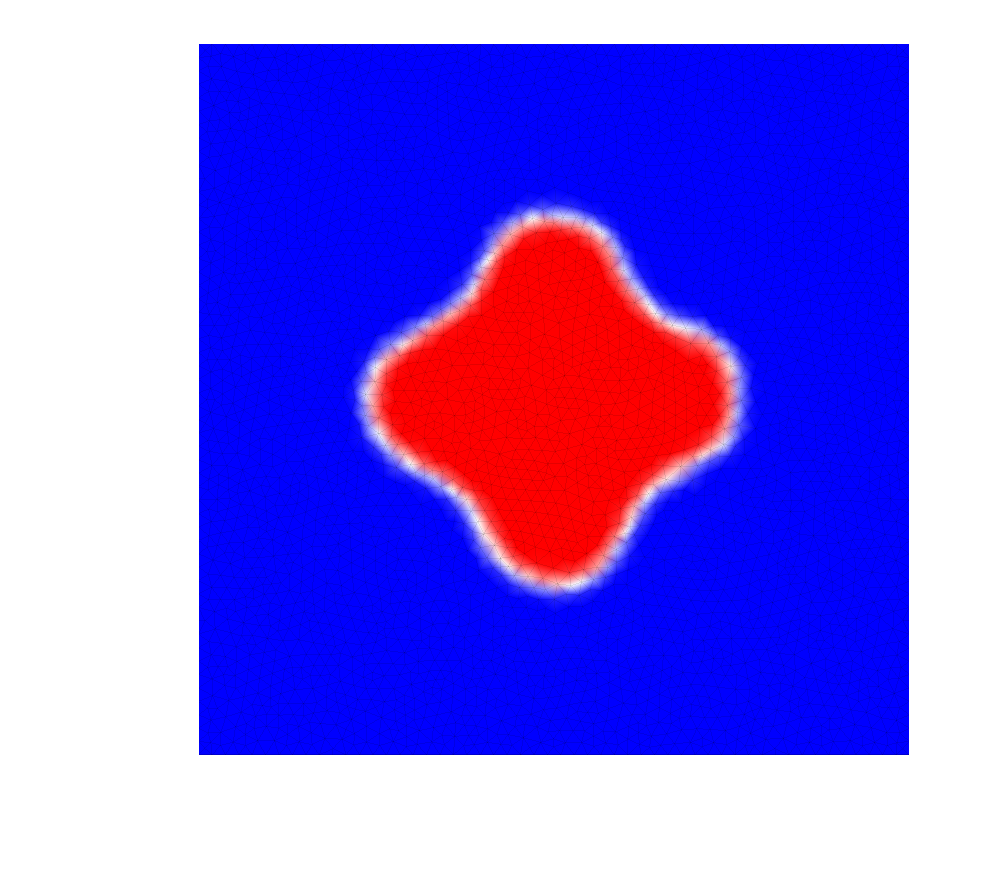}
\includegraphics[width = 4cm]{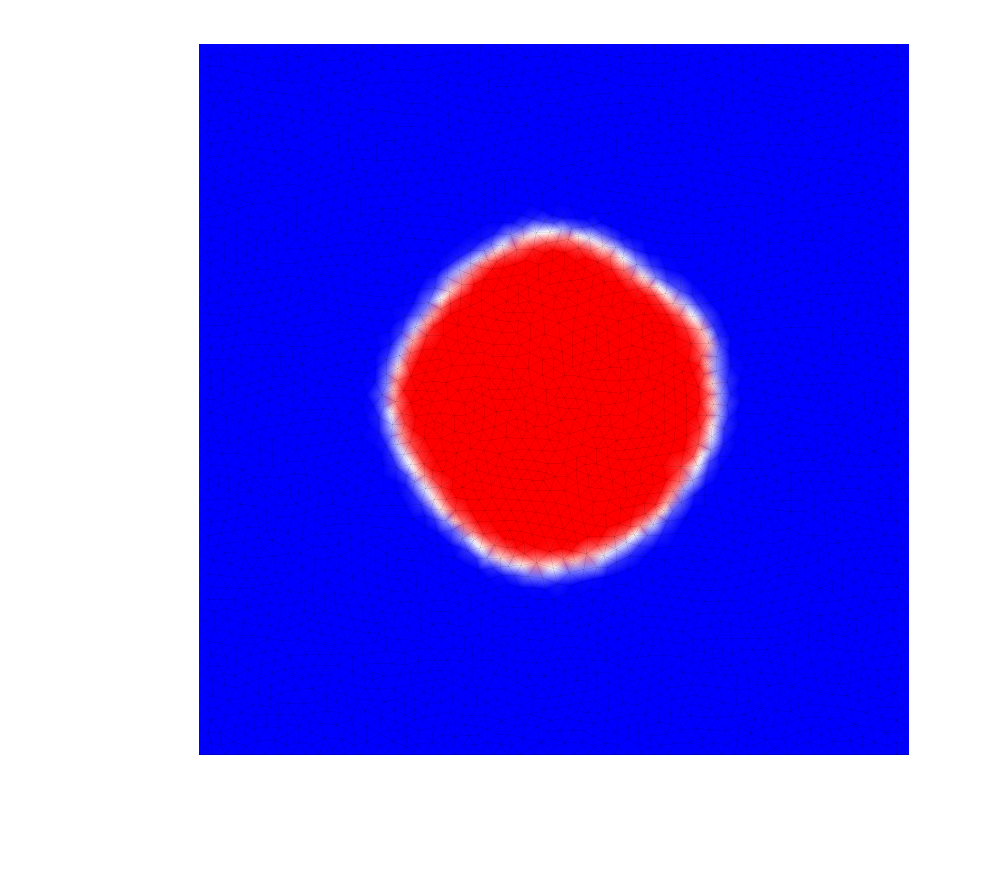}
\\
\includegraphics[width = 4cm]{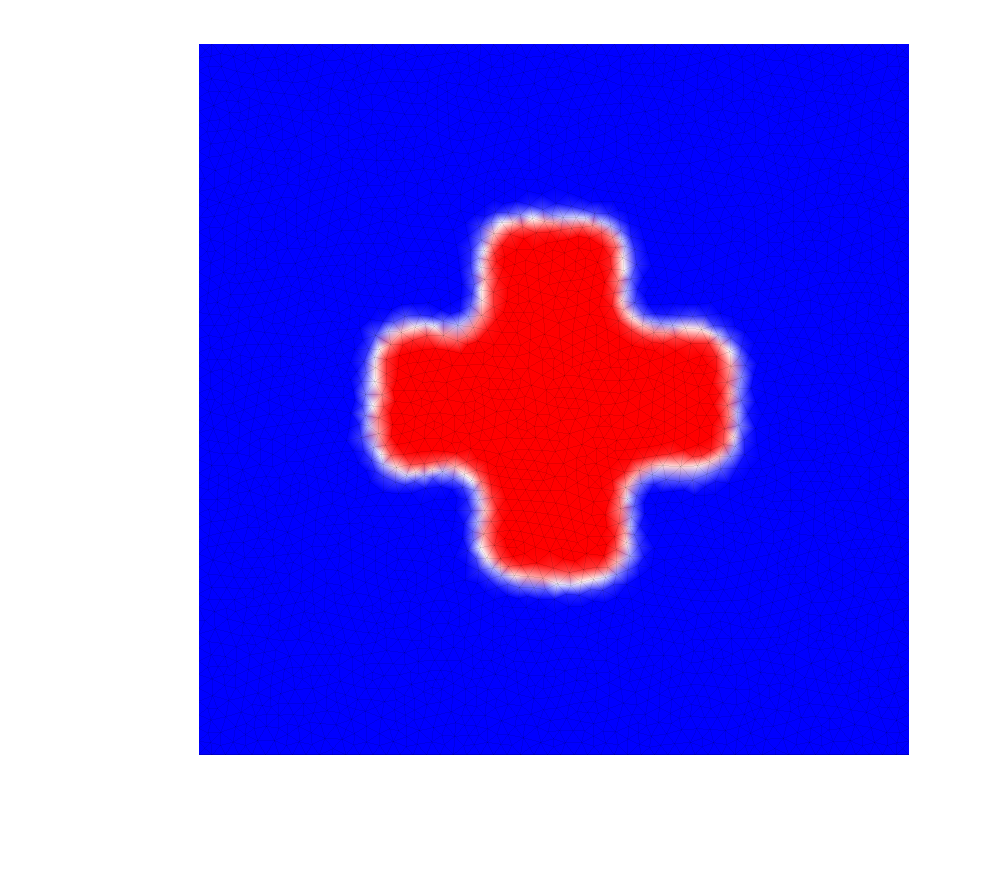}
\includegraphics[width = 4cm]{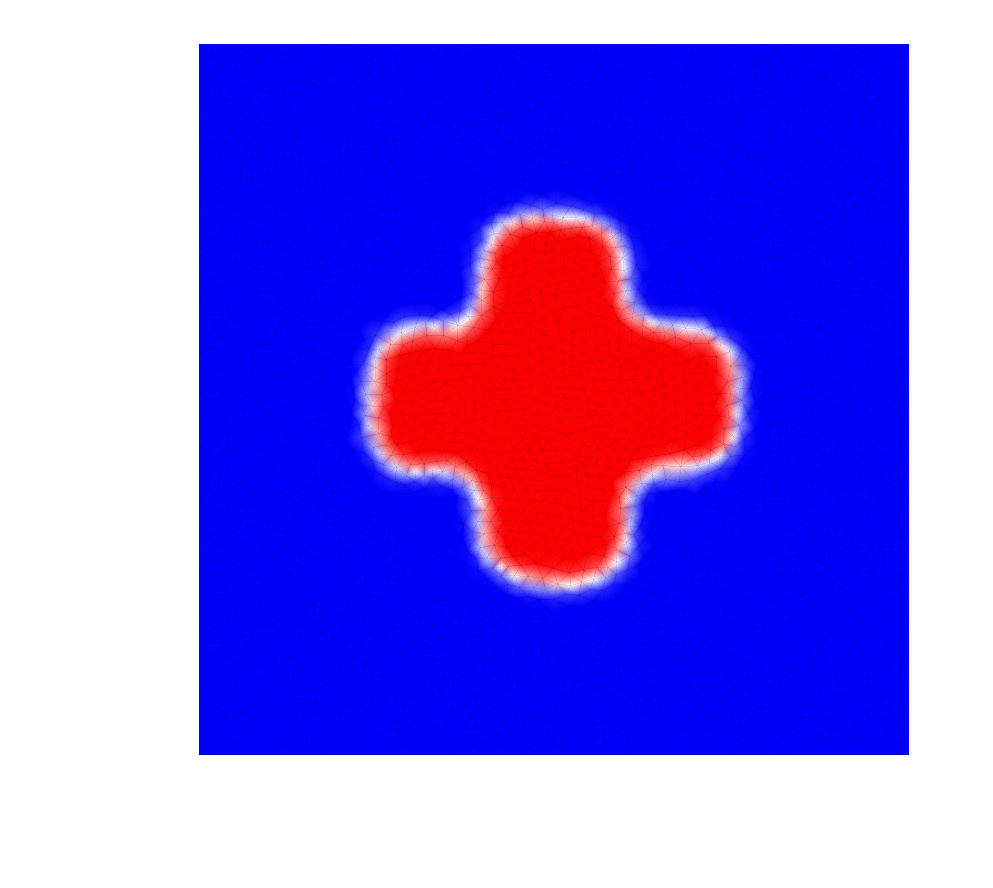}
\includegraphics[width = 4cm]{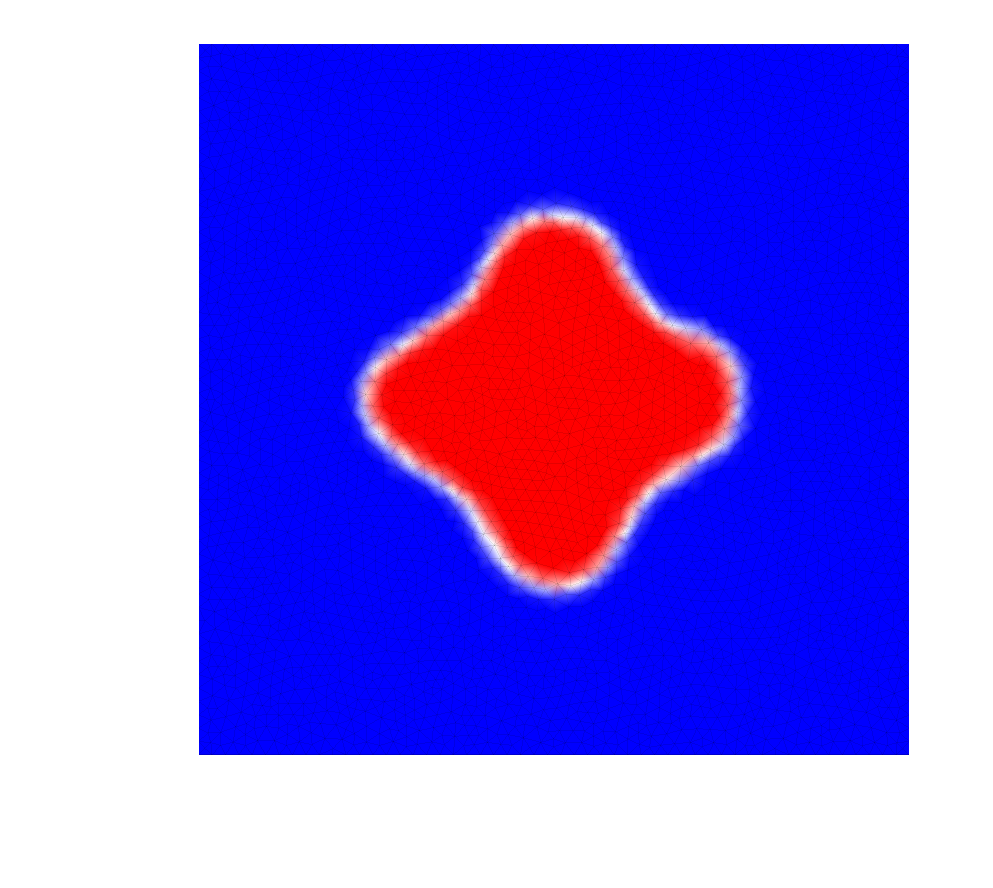}
\caption{Evolution along time of the numerical solution to the non-local problem~\eqref{eq:syst-PDE} 
(top) and to the local problem~\eqref{eq:EG} (bottom).
Snapshots at time $t = 10^{-2}$ (left), $t=2.10^{-2}$ (middle), and $t=10^{-1}$ (right).} \label{fig:cross_snapshots}
\end{figure}

\subsubsection{Test case 2: Spinodal decomposition}
Similarly to the local model, the non-local model is able to reproduce the spinodal decomposition for mixtures. 
In order to illustrate this fact, we start from an initial data which consists in a constant concentration plus a small 
random perturbation: 
\[
c_1^0(\x) = 0.5 + r(\x), \qquad r \ll 1. 
\]
Since $c_1 = 0.5$ is very unfavorable from an energetic point of view, both phase will separate very rapidly, letting areas 
with pure phase appear. Then these area will cluster in order to minimize their perimeter. We choose $\alpha = 3.10^{-4}$ and $\chi = 0.96$ and 
we plot on Figure~\ref{fig:spinodal_snapshots}
some snapshots to illustrate the spinodal decomposition corresponding to models~\eqref{eq:syst-PDE} and \eqref{eq:EG}. 
On Figure~\ref{fig:spinodal_NRJ}, we compare the evolution of the energy along time for spinodal decomposition 
corresponding to both models. As expected, the energy decay is faster for the non-local model than for the local one. 
But contrarily to Test case 1, the solutions seem to converge towards different steady states. 

\begin{figure}[htb]
\centering
\includegraphics[width = 4cm]{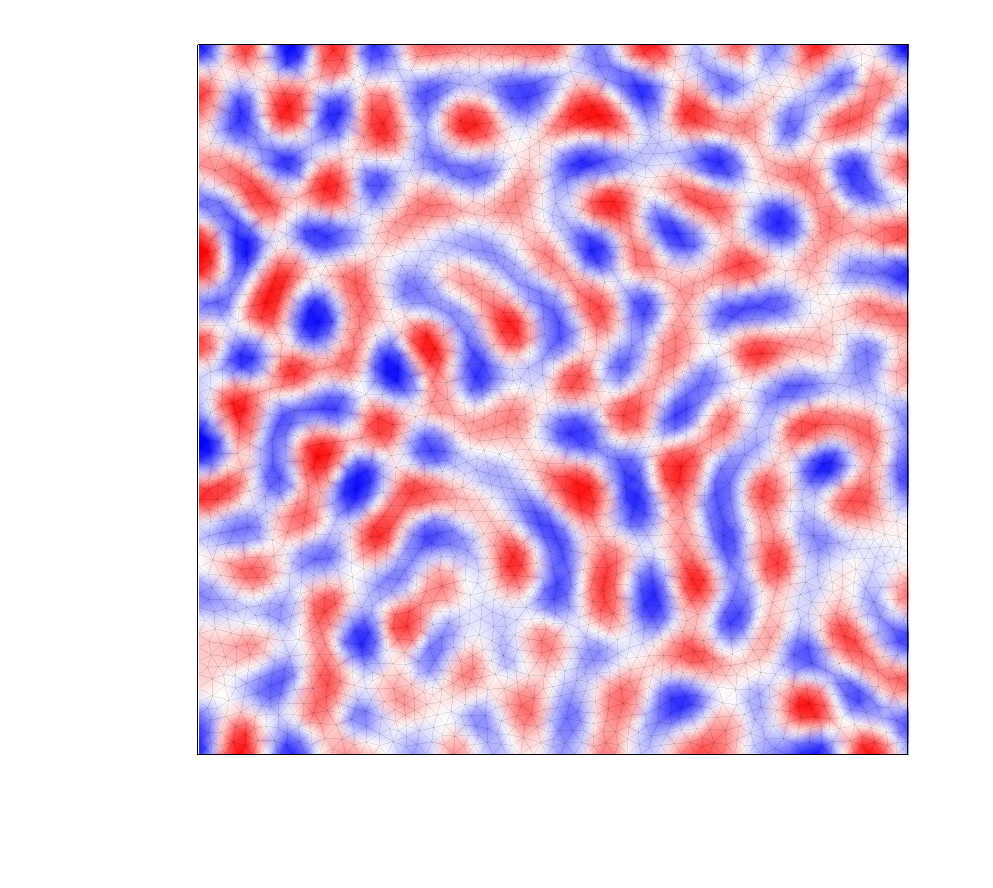}
\includegraphics[width = 4cm]{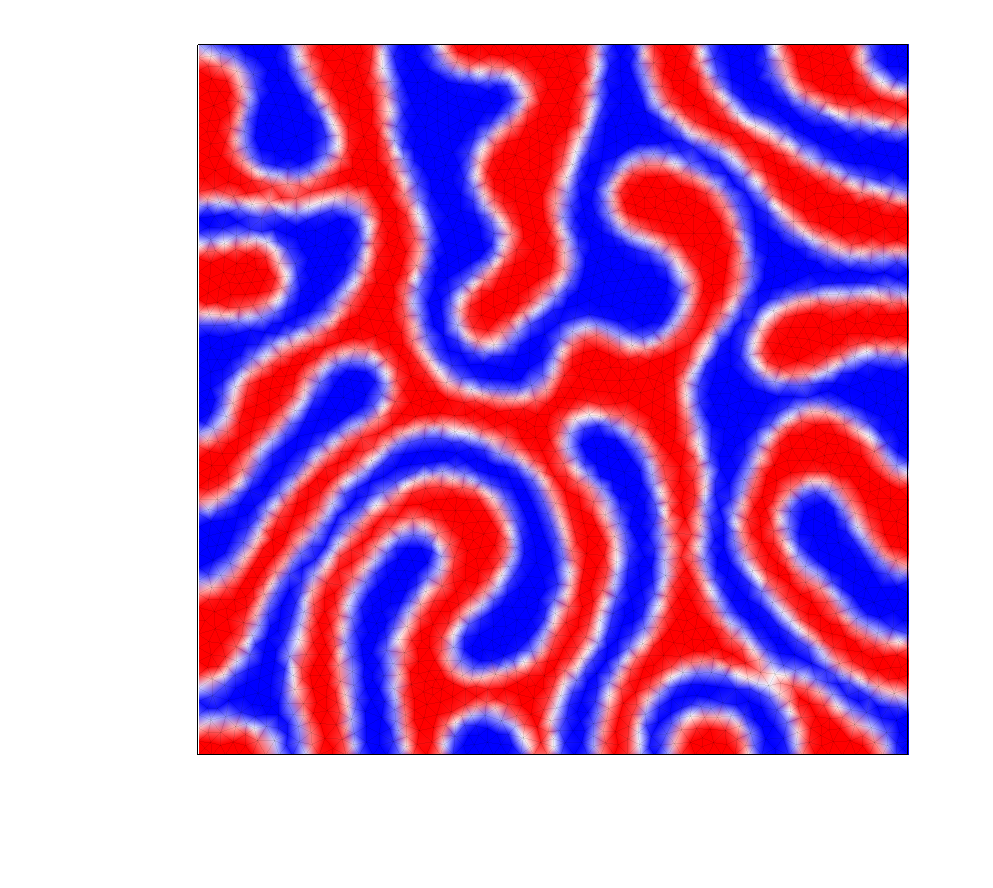}
\includegraphics[width = 4cm]{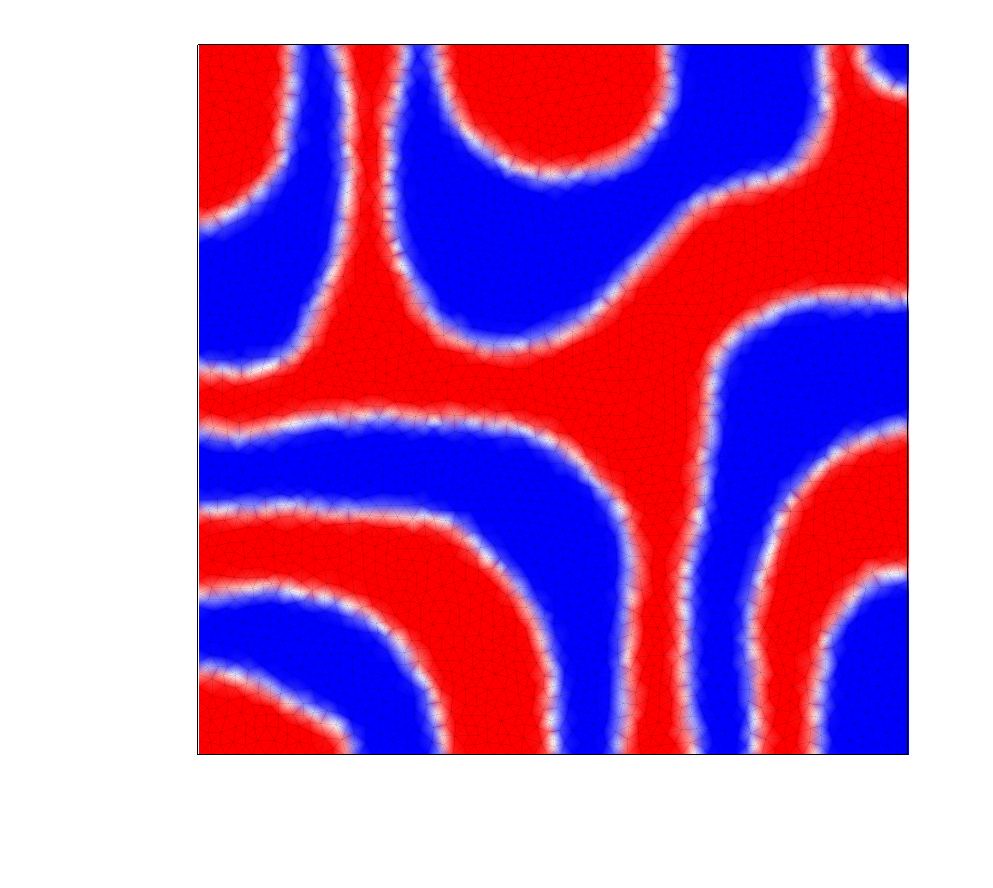}
\\
\includegraphics[width = 4cm]{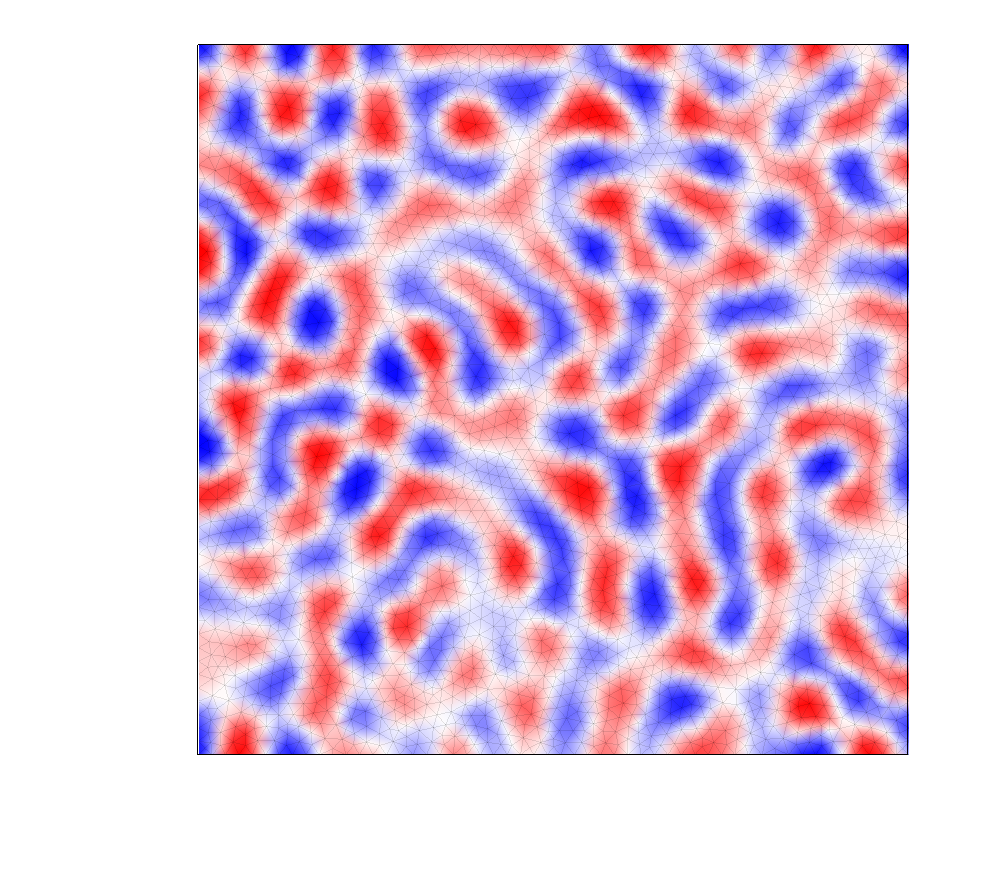}
\includegraphics[width = 4cm]{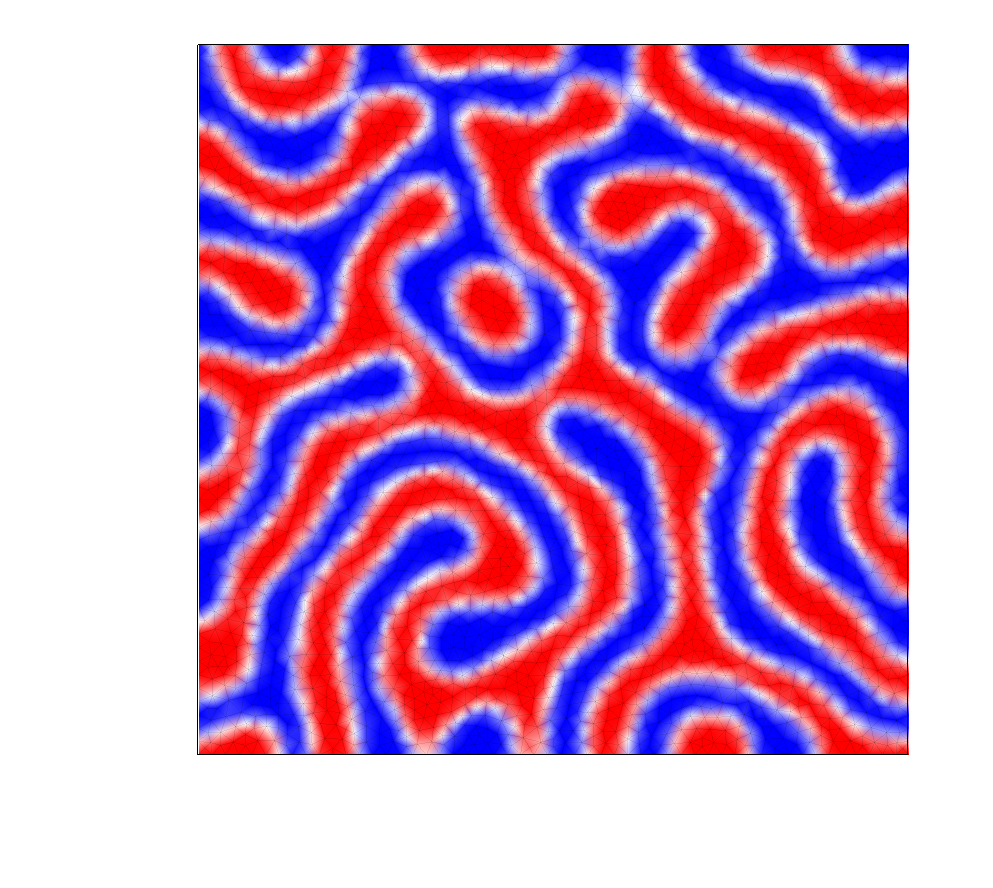}
\includegraphics[width = 4cm]{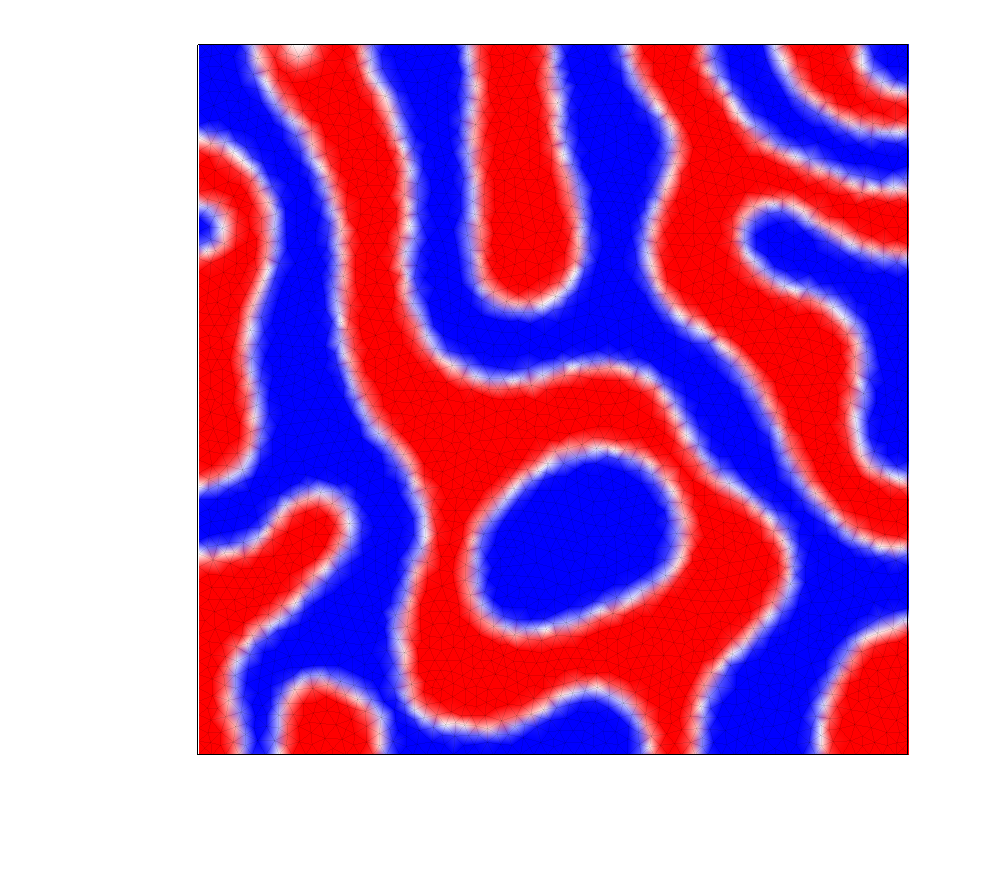}
\caption{Snapshots at times $t=6.10^{-3}$ (left), $t=5.10^{-2}$ (middle), and 
$t =1$ (right) illustrating the spinodal decomposition governed by model~\eqref{eq:syst-PDE} (top) 
and model~\eqref{eq:EG} (bottom). }\label{fig:spinodal_snapshots}
\end{figure}

\begin{figure}[htb]
\centering
\includegraphics[width=7cm]{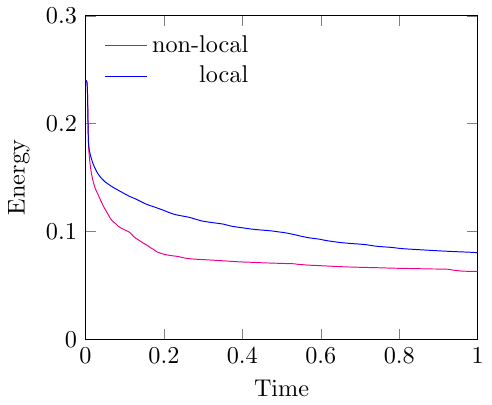}
 \caption{Evolution of the energy $\Ee(\bc)$ along time for the non-local model~\eqref{eq:syst-PDE} and the local one~\eqref{eq:EG} 
 for the spinodal decomposition test case.}\label{fig:spinodal_NRJ}
 \end{figure}

\section{Wasserstein gradient flow, JKO scheme and main result}\label{sec:JKO}

\subsection{Wasserstein distance}\label{ssec:Wasserstein}

As a preliminary to the introduction of the minimizing movement scheme, 
we introduce some necessary material related to Wasserstein (or Monge-Kantorovich) distances
between nonnegative measures of prescribed mass that are absolutely continuous w.r.t. to the Lebesgue measure. 
We refer to Santambrogio's monograph~\cite{Santambrogio_OTAM} for an introduction to optimal transportation and to the Wasserstein distances, 
and to Villani's big book~\cite{Villani09} for a more complete presentation. 

Given  two elements $c_i$ and $\check c_i$  of $\Aa_i$ ($i\in\{1,2\}$), 
a map $\bt: \O \to \O$ is said to send $c_i$ on $\check c_i$ (we write $\check c_i = \bt \# c_i$) if
$$
\int_A \check c_i(\x) \d\x = \int_{\bt^{-1}(A)} c_i(\x) \d\x, \qquad \text{for all Borel subset $A$ of $\O$}.
$$
The Wasserstein distance $W_i(c_i, \check c_i)$ with quadratic cost function 
between $c_i$ and $\check c_i$ is then defined by  
\be\label{eq:W_i}
W_{i}(c_i, \check c_i) = \inf_{\bt \;\text{s.t.}\; \check c_i = \bt\#c_i}\left( \frac1{m_i}\int_\O |\x-\bt(\x)|^2 c_i(\x)\d\x \right)^{1/2}, \qquad i \in \{1,2\}.
\ee
In~\eqref{eq:W_i}, the infimum is in fact a minimum, and $\bt$ is the gradient of a convex function.
In our context of fluid flows, the cost for moving the mass of the phase $i$ from a configuration $c_i$ to another configuration $\check c_i$ 
regardless to the other phase is equal to $W_i^2(c_i, \check c_i)$. The multiplying factor $1/m_i$ is natural since
the more mobile is the phase, the less expensive are its displacements.  
We can then define the tensorized Wasserstein distance $\bW$ on $\bAa$ by
$$
\bW^2(\bc, \check \bc) = W_1^2(c_1, \check c_1) + W_2^2(c_2, \check c_2), \qquad \forall \bc, \check \bc \in \bAa.
$$
In the core of the proof, we will make an extensive use of the Kantorovich dual problem.
More precisely, we will use the fact that 
\be\label{eq:DP}
\frac12 W_i^2(c_i, \check c_i) = \sup_{\substack{
\varphi_i \in L^1(c_i), \psi_i \in L^1(\check c_i) 
\\ \varphi_i(\x) + \psi_i(\by) \leq \frac{|\x - \by|^2}{2m_i}
}} \int_\O c_i(\x) \varphi_i(\x) \d\x + \int_\O \check c_i(\by) \psi_i (\by) \d \by. 
\ee
Here, $L^1(\rho)$ denotes the sets of integrable functions for the measure with density $\rho$. 
Here again, the supremum is in fact a maximum, and the 
Kantorovich potentials $(\varphi_i, \psi_i)$ achieving the $\sup$ in~\eqref{eq:DP} are 
$\d c_i \otimes \d \check c_i$ unique up to an additive constant. 
The optimal transportation $\bt_i$ sending $c_i$ on 
$\check c_i$ achieving the $\inf$ in~\eqref{eq:W_i} is related to the Kantorovich potential by 
$$
\bt_i(\x) = \x - m_i \grad \varphi_i(\x), \qquad \forall \x \in \O
$$
with $\varphi_i$ achieving the sup in~\eqref{eq:DP}. As a consequence, the formula~\eqref{eq:W_i} provides 
\be\label{eq:Kanto-grad}
W_i^2(c_i, \check c_i) = m_i \int_\O c_i \left|\grad \varphi_i \right|^2 \d\x, \qquad i \in \{1,2\}
\ee
to be used in the sequel.

\subsection{The JKO scheme and the approximate solution}\label{ssec:JKO}

We have now all the necessary material at hand to define the minimizing movement scheme. 
Let $\tau>0$ and $\bc^{n-1} \in \bAa\cap \bXx$, then define the functional 
$\Ff^n_\tau: \bAa \to \R\cup \{+\infty\}$  by setting
\[
  \Ff^n_\tau(\bc)  = \Ee(\bc) + \frac{1}{2\tau} \bW^2(\bc, \bc^{n-1}), \qquad \forall \bc \in \bAa.
\]
The functional $\Ff^n_\tau$ is bounded from below since all its components are.
Then we define 
\be\label{eq:JKO}
\bc^n \in \underset{\bc \in \bAa}{\text{argmin}}\;\Ff^n_\tau(\bc).
\ee
 {In case that $\Ff_\tau^n$ possesses several mimimizers, a selection must be made:
  if $c^{n-1}_1$ and $c^{n-1}_2$ are both positive a.e. in $\O$, then any minimizer is fine.
  However, if one of the two densities vanishes on a set of positive measure,
  then we need to select $\bc^n$ such that it is an accumulation point in the weak $H^2(\O)^2$-topology for $\delta\to0$ 
  of the set of minimizers of the functionals}
\begin{align}
  \label{eq:Ffdelta}
  \Ff^{n,\delta}_\tau(\bc) = \Ee(\bc) + \frac{1}{2\tau} \bW^2(\bc, \bc^{n-1,\delta}),
  \quad \text{with}\quad
  c^{n-1,\delta}_i=(1-\delta)c^{n-1}_i + \delta \ov c_i
\end{align}
where $\ov c_i$ is defined by~\eqref{eq:m_i}.
Under certain conditions, 
for instance in the thermally agitated situation where $\theta_1,\theta_2>0$,
it is known a priori that both $c^n_1$ and $c^n_2$ are strictly positive,
and hence no selection is necessary,
see e.g. Lemma \ref{lem:log(c)}.
 {The proof of  {the existence of $\bc^n$ solution to} \eqref{eq:JKO} is given below.
  The proof that one can indeed select the minimizer in the way described above
  is postponed to Corollary \ref{cor:JKO} in Section \ref{sec:Euler},
  where we show compactness properties of the set of minimizers.} 
\begin{prop}\label{prop:JKO}
  For any $\bc^{n-1} \in \bAa\cap \bXx$, there exists (at least) one solution 
  $\bc^n \in \bAa\cap \bXx$ to the minimization scheme~\eqref{eq:JKO}. 
\end{prop}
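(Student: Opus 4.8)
The plan is to prove existence of a minimizer by the direct method of the calculus of variations. First I would fix a minimizing sequence $(\bc^k)_{k\in\N}$ in $\bAa$ for $\Ff^n_\tau$; since $\Ff^n_\tau(\bc^{n-1}) = \Ee(\bc^{n-1}) < \infty$ (because $\bc^{n-1}\in\bXx$), we may assume $\Ff^n_\tau(\bc^k) \le \Ee(\bc^{n-1})+1$ for all $k$. Because every component of $\Ee$ other than $\Ee_{\rm chem}$ is bounded below — indeed $\Ee_{\rm Dir}\ge0$, $\Ee_{\rm therm}\ge0$, $\Ee_{\rm cons}\ge0$, and $\Ee_{\rm ext}$ is bounded on $\bAa$ since $c_i\in[0,1]$, $|\O|<\infty$ and $\Psi_i\in H^1(\O)\subset L^1(\O)$ — and since $\Ee_{\rm chem}(\bc)=\chi\int_\O c_1c_2\,\d\x$ is bounded (again $c_i\in[0,1]$, $|\O|<\infty$), the uniform bound on $\Ff^n_\tau(\bc^k)$ forces $\Ee_{\rm Dir}(\bc^k)\le C$, hence $\|\grad c_i^k\|_{L^2(\O)}\le C$ uniformly. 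Combined with the fixed mass constraint $\int_\O c_i^k = |\O|\ov c_i$ and the Poincar\'e--Wirtinger inequality, this gives a uniform bound on $\|c_i^k\|_{H^1(\O)}$.

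Next I would extract a subsequence (not relabeled) such that $c_i^k \rightharpoonup c_i^n$ weakly in $H^1(\O)$ and, by Rellich--Kondrachov, strongly in $L^2(\O)$ and a.e. in $\O$ for both $i\in\{1,2\}$. The a.e. convergence preserves the pointwise constraints: $0\le c_i^n\le1$ a.e. and $c_1^n+c_2^n=1$ a.e., so $\bc^n\in\bXx$; the $L^2$ (hence $L^1$) convergence preserves the mass constraints, so $\bc^n\in\bAa$. It remains to check lower semicontinuity of $\Ff^n_\tau$ along this subsequence. The term $\Ee_{\rm Dir}$ is l.s.c. under weak $H^1$ convergence (convexity of $\bc\mapsto\int|\grad c_i|^2$); $\Ee_{\rm therm}$ is l.s.c. by convexity of $H$ and Fatou (or weak $L^1$ l.s.c.); $\Ee_{\rm cons}$ is $0$ on the limit since $\bc^n\in\bXx$. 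The nonconvex term $\Ee_{\rm chem}$ and the linear term $\Ee_{\rm ext}$ actually converge, since $c_1^kc_2^k\to c_1^nc_2^n$ and $c_i^k\Psi_i\to c_i^n\Psi_i$ in $L^1(\O)$ by dominated convergence (the integrands are bounded by $1$ and by $|\Psi_i|\in L^1$ respectively, and converge a.e.). Finally, the Wasserstein term $\bc\mapsto\bW^2(\bc,\bc^{n-1})$ is l.s.c. with respect to $L^1$-convergence (equivalently narrow convergence of the associated measures), a standard fact for quadratic Wasserstein distances; hence $\bW^2(\bc^n,\bc^{n-1})\le\liminf_k\bW^2(\bc^k,\bc^{n-1})$. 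Adding up, $\Ff^n_\tau(\bc^n)\le\liminf_k\Ff^n_\tau(\bc^k)=\inf_{\bAa}\Ff^n_\tau$, so $\bc^n$ is a minimizer, and it lies in $\bAa\cap\bXx$ as shown.

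I do not expect any single step to be a serious obstacle — this is the standard JKO well-posedness argument — but the point requiring the most care is the interplay between the pointwise constraints ($0\le c_i\le1$, $c_1+c_2=1$) and the functional-analytic convergences: one must be sure that a.e. convergence along the extracted subsequence is genuinely available (it is, via $H^1\hookrightarrow\hookrightarrow L^2$ and passing to a further subsequence) so that the constraint set is closed, and that the finiteness $\Ee(\bc^{n-1})<\infty$ is what makes the minimizing sequence have the needed Dirichlet bound in the first place. A minor subtlety worth a sentence is that $\Ee_{\rm chem}$ is not weakly l.s.c. in general, which is why one uses strong $L^2$ (hence pointwise a.e.) convergence to pass to the limit in it by dominated convergence rather than relying on semicontinuity.
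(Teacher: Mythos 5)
Your proposal is correct and follows essentially the same route as the paper: a minimizing sequence with uniformly bounded energy, a uniform $H^1$ bound from the Dirichlet term plus Poincar\'e--Wirtinger, extraction of a subsequence converging weakly in $H^1$ and strongly in $L^1$/$L^2$ by Rellich, and lower semicontinuity (or, for the chemical and external terms, genuine continuity) of each component of $\Ff^n_\tau$ together with $L^1$-lower semicontinuity of the Wasserstein distance. Your extra care with the nonconvex term $\Ee_{\rm chem}$ via strong convergence and dominated convergence is exactly the point the paper glosses over when it asserts all components are sequentially l.s.c., so no changes are needed.
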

\begin{proof}
  Let $\left(\bc^{n,k}\right)_{k\ge0}$ be a minimizing sequence with $\bc^{n,0} = \bc^{n-1}$
   {and such that}
  \be\label{eq:mini-Eee}
  \Ee(\bc^{n,k}) \le \Ee(\bc^{n-1}) < \infty \qquad \text{for all $k \ge 0$}. 
  \ee
  We infer from~\eqref{eq:bXx} that $\bc^{n,k} \in \bXx$, 
  hence $0 \le c_i^{n,k} \le 1$ for all $k \ge 0$ and $i \in \{1,2\}$.
  Moreover,  {in view of the presence of the Dirichlet energy in $\Ee$,
    it follows by means of the Poincar\'e-Wirtinger inequality 
    that the $c_i^{n,k}$ are $k$-uniformly bounded in $H^1(\O)$.
    Passing to a subsequence if necessary, the $c_i^{n,k}$ converge to limits $c_i^n$,
    weakly in $H^1(\O)$ thanks to Alaoglu's theorem, 
    and also strongly in $L^1(\O)$ by Rellich's compactness lemma.
    All the components of $\Ee$ are sequentielly lower semi-continuous with respect to this convergence,
    and so}
  $$
  \Ee(\bc^n) \le \operatorname*{lim\,inf}_{k\to\infty}\Ee(\bc^{n,k}).
  $$
   {Finally, also the Wasserstein distance is lower semi-continuous 
    with respect to strong convergence in $L^1(\O)$,}
  $$
  \bW(\bc^{n}, \bc^{n-1}) \le  \operatorname*{lim\,inf}_{k\to\infty}\bW(\bc^{n,k}, \bc^{n-1}).
  $$
  As a consequence, $\bc^n$ is a minimizer of $\Ff_\tau^n$.
\end{proof}
 {Each sequence $\left(\bc^n\right)_{n\ge 1}$ of iterated solutions to the scheme~\eqref{eq:JKO}
  is accompanied by approximate phase potentials $\left(\bmu^n\right)_{n\ge1}$:
  these are introduced in such a way that in the time-continuous limit, 
  the continuity equations $\p_tc_i=\nabla\cdot[m_ic_i\nabla(\mu_i+\Psi_i)]$ hold.
  The suitable quantities $\mu_i^n$ are identified (somewhat \textit{a posteriori}) 
  by comparing the optimality conditions for \eqref{eq:JKO} with the persued limit PDE system \eqref{eq:syst-PDE2}.}

 {In order to justify the formal definition of $\bmu^n$ that we give below in \eqref{eq:defmu}, 
  we anticipate an auxiliary result from Section \ref{sec:Euler},
  that can be understood as a formulation of the time-discrete Euler-Lagrange equations.
  It involves the (backward) Kantorovich potentials
  $\bvarphi^n = \left(\varphi_1^n, \varphi_2^n\right)$ 
  sending $\bc^n$ on $\bc^{n-1}$.
  A subtle point is to overcome the inherent non-uniqueness of $\varphi_1^n$ and $\varphi_2^n$
  --- particularly if one of the $c^n_i$'s vanishes on a set of positive measure ---
  by making a suitable selection, as will be explained in Section \ref{sec:Euler}.
  We note that the intricate selection procedure for the minimizer $\bc^n$ in \eqref{eq:JKO} enters precisely at this point.}
\begin{lem}\label{lem:preEL}
  At each step $n\ge1$, 
  there exist Kantorovich potentials $\bvarphi^n=(\varphi_1^n,\varphi_2^n)$ for sending $\bc^n$ to $\bc^{n-1}$
  such that $F^n:\Omega\to\R$, given by
  \be\label{eq:F^n}
  F^n = \frac{\varphi_1^n}{\tau} -  \frac{\varphi_2^n}{\tau}  - \alpha \Delta c_1^n + f(c_1^n) + \chi(1-2c_1^n) + \Psi_1-\Psi_2,
  \ee
  satisfies
  \begin{align}
    \label{eq:EL}
    F^n\le 0 \quad \text{a.e. in $\{c_1^n>0\}$}, \qquad 
    F^n\ge 0 \quad \text{a.e. in $\{c_2^n>0\}$}.    
  \end{align}
\end{lem}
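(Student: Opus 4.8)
The plan is to derive \eqref{eq:EL} as the first-order optimality (Euler–Lagrange) conditions for the minimization problem \eqref{eq:JKO}, handling the possible degeneracy $\{c_i^n=0\}$ through the regularized problems \eqref{eq:Ffdelta} and a limit passage. The key point is that the Wasserstein term $\frac1{2\tau}\bW^2(\cdot,\bc^{n-1})$ is not smooth in the usual $L^2$ sense, but its first variation along admissible perturbations is controlled by the Kantorovich potentials via a well-known argument (see e.g.\ Santambrogio \cite{Santambrogio_OTAM}, or the original \cite{JKO98}).

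\smallskip

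\textbf{Step 1: Admissible perturbations.} Fix $n$, write $\bc=\bc^n$, $\bvarphi=\bvarphi^n$. Since $c_1+c_2=1$ is the active constraint and the masses $\ov c_i$ are fixed, the admissible infinitesimal perturbations are of the form $c_1\mapsto c_1+s\,\zeta$, $c_2\mapsto c_2-s\,\zeta$ with $\zeta\in L^\infty(\O)$, $\int_\O\zeta\,\d\x=0$, and with $\zeta\ge0$ a.e.\ on $\{c_1=0\}$ and $\zeta\le0$ a.e.\ on $\{c_2=0\}$ (so that $0\le c_i+s\,\zeta\le1$ remains true for small $s>0$). For $\bc$ a minimizer, $\frac{\d}{\d s}\big|_{s=0^+}\Ff_\tau^n\big(c_1+s\zeta,c_2-s\zeta\big)\ge0$ for every such $\zeta$.

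\smallskip

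\textbf{Step 2: First variation of each term.} The Dirichlet, chemical, thermal, and exterior parts are Gateaux-differentiable (using $c_1^n\in H^2(\O)$ from the regularity theory of Section \ref{sec:Euler} to integrate by parts, and $f(c_1^n)\in L^2(\O)$), giving the contribution $\int_\O\big(-\alpha\Delta c_1+\chi(1-2c_1)+f(c_1)+\Psi_1-\Psi_2\big)\zeta\,\d\x$. For the Wasserstein term, the standard one-sided estimate
\[
  \limsup_{s\to0^+}\frac{W_i^2(c_i+s\zeta_i,\check c_i)-W_i^2(c_i,\check c_i)}{2s}\le \int_\O\varphi_i\,\zeta_i\,\d\x
\]
(with $\zeta_1=\zeta$, $\zeta_2=-\zeta$), valid for any Kantorovich potential $\varphi_i$ sending $c_i$ on $\check c_i=c_i^{n-1}$, yields the bound $\frac1\tau\int_\O(\varphi_1-\varphi_2)\zeta\,\d\x$. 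Summing, the optimality inequality becomes $\int_\O F^n\,\zeta\,\d\x\ge0$ for all admissible $\zeta$ with $F^n$ as in \eqref{eq:F^n}. Testing with $\zeta$ supported in $\{c_1>0\}\cap\{c_2>0\}$ (both signs allowed, zero-mean) forces $F^n\equiv\mathrm{const}$ there; testing with one-signed $\zeta$ touching the degenerate sets fixes the sign, giving $F^n\le0$ on $\{c_1>0\}$ and $F^n\ge0$ on $\{c_2>0\}$. Absorbing the additive constant into the (non-unique) potentials $\varphi_i$ — legitimate because Kantorovich potentials are only defined up to a constant — gives exactly \eqref{eq:EL}.

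\smallskip

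\textbf{Step 3: The degenerate case and selection.} When $c_i^{n-1}$ vanishes on a set of positive measure, the Kantorovich potential $\varphi_i^n$ need not be unique even up to a constant, and the one-sided derivative bound above is the only information available. To obtain a \emph{consistent} choice, I would first carry out Steps 1–2 for the regularized functional $\Ff_\tau^{n,\delta}$ of \eqref{eq:Ffdelta}, whose reference measures $c_i^{n-1,\delta}$ are strictly positive: there the Kantorovich potentials $\varphi_i^{n,\delta}$ are unique up to a constant and one gets \eqref{eq:EL} cleanly for the minimizer $\bc^{n,\delta}$. Then pass $\delta\to0$: by the compactness of minimizers (the selected $\bc^n$ is, by definition, a weak-$H^2$ accumulation point of $\bc^{n,\delta}$), together with stability of Kantorovich potentials under convergence of the data (\cite{Santambrogio_OTAM}), one extracts limits $\varphi_i^n$ of $\varphi_i^{n,\delta}$ that are admissible Kantorovich potentials for sending $\bc^n$ to $\bc^{n-1}$, and \eqref{eq:EL} survives the limit since the inequalities are stable under a.e.\ convergence and the sets $\{c_1^n>0\}$, $\{c_2^n>0\}$ behave well. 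This is where the intricate selection of $\bc^n$ in \eqref{eq:JKO} is used.

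\smallskip

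\textbf{Main obstacle.} The delicate part is \emph{not} the formal Euler–Lagrange computation but the rigorous handling of the Wasserstein term's first variation together with the non-uniqueness of the Kantorovich potentials on degenerate sets — i.e.\ ensuring that the \emph{same} selection of $\varphi_i^n$ that makes the inequality \eqref{eq:EL} hold can be used coherently throughout the rest of the argument (in particular when defining $\bmu^n$ via \eqref{eq:defmu}). I expect this to be resolved by threading everything through the $\delta$-regularization and invoking the compactness/stability results of Section \ref{sec:Euler}; the $H^2$-regularity $c_1^n\in H^2(\O)$ needed to make $\Delta c_1^n$ a genuine $L^2$ function (rather than a distribution) is itself a nontrivial input that I would likewise borrow from that section.
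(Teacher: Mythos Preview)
Your overall strategy matches the paper's: derive a linearized optimality condition by perturbing $\bc^n$, handle the degenerate case $|\{c_i^{n-1}=0\}|>0$ through the $\delta$-regularization \eqref{eq:Ffdelta}, and fix the additive constant in $\bvarphi^n$ at the end. The paper packages the outcome of your Step~2 as the global inequality $\int_\O F^n c_1^n\,\d\x\le\int_\O F^n c_1\,\d\x$ for all $\bc\in\bAa$ (Lemma~\ref{lem:linearized}) and then invokes the bathtub principle to obtain \eqref{eq:bathtub}, which is equivalent to your direct test-function deduction of the sign conditions.

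There is, however, one genuine error in Step~2. The one-sided Wasserstein estimate you state,
\[
  \limsup_{s\to0^+}\frac{W_i^2(c_i+s\zeta_i,\check c_i)-W_i^2(c_i,\check c_i)}{2s}\le \int_\O\varphi_i\,\zeta_i\,\d\x,
\]
is \emph{not} valid ``for any Kantorovich potential $\varphi_i$ sending $c_i$ on $\check c_i$''. Kantorovich duality gives the \emph{opposite} inequality $\tfrac12 W_i^2(c_i+s\zeta_i,\check c_i)-\tfrac12 W_i^2(c_i,\check c_i)\ge s\int_\O\varphi_i\zeta_i\,\d\x$ for any such $\varphi_i$, hence only $\liminf\ge\int\varphi_i\zeta_i$, which is useless for deducing $\int_\O F^n\zeta\,\d\x\ge0$ from minimality. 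To obtain the needed upper bound, the paper uses the Kantorovich potentials $\varphi_i^\eps$ from the \emph{perturbed} density $c_i^\eps=(1-\eps)c_i^n+\eps c_i$ to $c_i^{n-1}$ (so that duality points the right way), and then passes to the limit $\eps\to0$ via the uniform convergence $\varphi_i^\eps\to\varphi_i^n$ --- which requires uniqueness of $\varphi_i^n$ up to a constant, i.e., $c_i^{n-1}>0$ a.e. Your Step~3 is exactly where this becomes rigorous, since $c_i^{n-1,\delta}>0$ by construction; but Step~2 as written does not stand on its own. A minor related point: the paper handles the thermal contribution not by Gateaux-differentiation at $\bc^n$ but by convexity, using $f(c_1^\eps)$ (well-defined for $\eps>0$) and passing to the limit; the hypothesis ``$f(c_1^n)\in L^2(\O)$'' that you invoke is not established in the paper (only $f(c_1^n)\in L^1(\O)$ when $\theta_i>0$, via Lemma~\ref{lem:log(c)}).
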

 {With the particular choice of $\bvarphi^n$ from the lemma, 
  we define $\bmu^n = \left(\mu_1^n, \mu_2^n\right) : \O \to \R^2$ by}
\begin{align}
  \label{eq:defmu}
  \mu_1^n := - \frac{\varphi_1^n}\tau - \Psi_1 - \theta_1 \log(c_1^n)  + \left(F^n\right)_+ , \quad \text{and} \quad
  \mu_2^n := - \frac{\varphi_2^n}\tau  - \Psi_2 - \theta_2 \log(c_2^n)  + \left(F^n\right)_- .
\end{align}
 {Actually, since $F^n$ is invariant under simultaneous addition of
  a global constant to both $\varphi_1^n$ and $\varphi_2^n$, 
  we can assume that their normalization is chosen such that}
\begin{align}
  \label{eq:ov-mu_mean}
  \int_\O \ov \mu^n \d\x = 0, &\qquad 
                                  \text{where}\quad \ov \mu^n = c_1^n \mu_1^n + c_2^n \mu_2^n.    
\end{align}
 {Note that it follows directly from the definition of $\bmu^n$ that}
\begin{align}
    \label{eq:diff-mu}
    \mu_1^n -\mu_2^n = - \alpha \Delta c_1^n + \chi(1-2c_1^n) &\qquad
                                                                \text{a.e. in $\O$}.  
\end{align}
 {Moreover, thanks to \eqref{eq:EL}, we have that}
\begin{align}
  \label{eq:mu_i^n}
  \mu_i^n = - \frac{\varphi_i^n}\tau - \Psi_i - \theta_i \log(c_i^n)  &\qquad 
                                                                        \text{a.e. in $\{c_i^n>0\}$}.
\end{align}
 {With these quantities at hand, we define further the  {piecewise constant} interpolants} $\bc_\tau: \R_+ \to \bAa \cap \bXx$ and $\bmu_\tau:\R_+\to\R^2$ 
by 
\be\label{eq:c_tau}
\bc_\tau(t) = (c_1^n,c_2^n), \quad
\bmu_\tau(t) = (\mu_1^n,\mu_2^n)
\qquad\text{for $t \in ((n-1)\tau, n\tau]$}.
\ee
 {Any pair $(\bc_\tau,\bmu_\tau)$ that has been obtained in this way from an iterated minimizer $\left(\bc^n\right)_{n\ge 1}$
will be referred to as \emph{$\tau$-approximate solution} below.}

\subsection{Weak solutions}
The goal of this section is to state our main result, that is the convergence of the JKO scheme. 
It requires the introduction of the notion of weak solution that will be obtained at the limit when the approximation parameter $\tau$ tends to 0.
\begin{Def}\label{def:weak}
  A pair $\left(\bc, \bmu\right)$ of phase concentrations $\bc=(c_1,c_2):\Omega\to[0,1]$ and phase potentials $\bmu=(\mu_1,\mu_2):\Omega\to\R$
  is said to be a {\em weak solution} to the problem~\eqref{eq:syst-PDE}, \eqref{eq:initial}, and~\eqref{eq:BC} if 
  \begin{itemize}
  \item \textbf{Regularity of concentrations:}
    $c_i\in L^\infty(\R_+;H^1(\O)) \cap L^2_{\rm loc}( {\R_+};H^2(\O)) \cap C(\R_+;L^2(\O))$; 
  \item \textbf{Regularity of potentials:}
    $\mu_i\in L^2_{\rm loc}(\R_+;L^{ {q_d}}(\O))$  {with $q_d=2$ if $d\in\{1,2\}$, and $q_d=\frac32$ if $d=3$}, 
    as well as $ {c_i} \grad \mu_i \in L^2(\R_+, L^2(\O))^d$, 
    and (normalization)
    $$
    \int_\O \left(c_1(\x,t) \mu_1(\x,t) + c_2(\x,t) \mu_2(\x,t)\right)\d\x = 0, \qquad \text{for a.e.}\; t \ge 0;
    $$
  \item \textbf{Initial and boundary conditions:}
    $c_i(\cdot,0) = c_i^0$ on $\O$, and $\grad c_i\cdot \n = 0$ on $\p\O\times \R_+$;
  \item \textbf{Volume constraint:}
    $c_1 + c_2 = 1$ a.e. in $\O \times \R_+$;
  \item \textbf{Continuity equations:}
    for all $\xi \in C^2(\ov \O)$ and all $t_1, t_2 \in \R_+$ with $t_2 \geq t_1$,
    \be\label{eq:weak1}
    \int_\O (c_i(\x,t_2) - c_i(\x, t_1)) \xi(\x) \d\x + m_i \int_{t_1}^{t_2} \int_\O \left(c_i \grad (\mu_i + \Psi_i) + \theta_i \grad c_i\right) \cdot \grad \xi \d\x \d t = 0;
    \ee
  \item \textbf{Steepest descent:}
    \eqref{eq:dmu} holds almost everywhere in $\O \times \R_+$, i.e.
    \be
    \mu_1  - \mu_2 = - \alpha \Delta c_1 + \chi (1-2c_1).
    \ee
  \end{itemize}
\end{Def}
Here is the convergence theorem for the minimization scheme. The  {global} existence of a weak solution is a by-product. 
 {All along the paper, $T$ denotes an arbitrary finite time horizon, and we make use of the shorten notation $Q_T$ for the space-time cylinder $\O\times(0,T)$.}
\begin{thm}[Convergence of the minimizing movement scheme]\label{thm:main} 
   {Assume that parameters $\alpha_1,\alpha_2,\chi>0$ and $\theta_1,\theta_2\ge0$ 
    as well as external potentials $\Psi_1,\Psi_2\in H^1(\O)$ are prescribed.
    Let $\bc^0=(c_1^0,c_2^0)$ be an initial condition satisfying \eqref{eq:c10+c20}.}

   {Then, for any sequence $\left(\tau_n\right)_{n\ge1} \subset (0,1)$ with $\tau_n\to0$
    and a corresponding sequence of $\tau_n$-approximate solutions $\left(\bc_{\tau_n}, \bmu_{\tau_n}\right)_{n\ge1}$,
    one can select a sub-sequence (not relabeled)}
  such that, 
  \begin{align*}
    & c_{i,\tau_n} \underset{n\to\infty}\longrightarrow c_i \quad \text{in the}\; L^\infty((0,T);H^1(\O))\text{-weak-$\star$ sense},\\
    &  \| c_{i,\tau_n}(\cdot, t) - c_i(\cdot, t)\|_{L^2(\O)}  \underset{n\to\infty}\longrightarrow 0 \quad \text{for all}\; t \in [0,T], \\
    & c_{i,\tau_n} \underset{n\to\infty}\longrightarrow c_i \quad \text{in}\; L^2((0,T);W^{1,d}(\O)),\\
    & c_{i,\tau_n} \underset{n\to\infty}\longrightarrow c_i \quad \text{weakly in}\; L^2((0,T);H^2(\O)),\\
    & \mu_{i,\tau_n} \underset{n\to\infty}\longrightarrow \mu_i \quad \text{for the weak topology of}\; L^2((0,T);L^{ {q_d}}(\O)),\\
    &  {c_{i,\tau_n} \grad \mu_{i,\tau_n}  \underset{n\to\infty}\longrightarrow  {{c_i}\,  \grad \mu_i} \quad \text{weakly in}\; L^2(Q_T)^d,} 
  \end{align*}
  and the limit $\left(\bc,\bmu\right)$ is a weak solution in the sense of Definition~\ref{def:weak}.
\end{thm}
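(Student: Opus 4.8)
The plan is to establish the a priori estimates that are uniform in the time step $\tau$, then extract compactness, and finally pass to the limit in the time-discrete Euler--Lagrange relations and the continuity equations. First I would derive the basic \emph{energy--dissipation estimate}: summing the defining inequality $\Ff^n_\tau(\bc^n)\le\Ff^n_\tau(\bc^{n-1})=\Ee(\bc^{n-1})$ over $n$ telescopes to
\[
\Ee(\bc^N_\tau) + \frac1{2\tau}\sum_{n=1}^N \bW^2(\bc^n,\bc^{n-1}) \le \Ee(\bc^0),
\]
which bounds $c_{i,\tau}$ in $L^\infty(\R_+;H^1(\O))$ (via $\Ee_{\rm Dir}$ plus the mass constraint and a Poincar\'e--Wirtinger inequality) and gives the metric-slope bound $\sum_n \bW^2(\bc^n,\bc^{n-1})/\tau \le C$. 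The latter, via \eqref{eq:Kanto-grad}, controls $\sum_n \tau\int_\O c_i^n|\grad\varphi_i^n/\tau|^2\d\x$, i.e. $c_{i,\tau}\grad(\varphi_{i,\tau}/\tau)$ in $L^2(Q_T)^d$. Combined with the flux identity $c_i^n\bv_i^n=-m_ic_i^n\grad\mu_i^n$ coming from \eqref{eq:defmu}--\eqref{eq:mu_i^n}, this is exactly the $L^2$-bound on $c_i\grad\mu_i$. The $W^{1,d}$ and $H^2$ bounds on $c_{i,\tau}$ should follow by testing the discrete Euler--Lagrange relation from Lemma~\ref{lem:preEL} against $-\Delta c_1^n$ (or using it as an elliptic regularity input: $\alpha\Delta c_1^n$ equals an $L^2$-controlled combination of $\varphi_i^n/\tau$, $f(c_1^n)$, $\chi(1-2c_1^n)$, $\Psi_i$ on $\{c_1^n>0\}$, and one handles the degenerate set by the convexity/monotonicity structure, yielding $\Delta c_1^n\in L^2$ and hence $c_1^n\in H^2$ by convexity of $\O$ and the Neumann condition).

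Second, I would obtain \emph{time regularity}. From the continuity equation in discrete form, $\int_\O(c_i^n-c_i^{n-1})\xi\,\d\x = -m_i\tau\int_\O(c_i^n\grad(\mu_i^n+\Psi_i)+\theta_i\grad c_i^n)\cdot\grad\xi\,\d\x$, together with the $L^2(Q_T)$ flux bound, one reads off that $\partial_t c_{i,\tau}$ is bounded in $L^2(0,T;(H^1)')$ (or a similar dual space accommodating the $L^{q_d}$ integrability of $\mu_i$ when $d=3$, where one uses $c_i\grad\mu_i\in L^2$ rather than $\grad\mu_i$ alone). An Aubin--Lions--Simon argument with $H^1\hookrightarrow\hookrightarrow L^2\hookrightarrow (H^1)'$ then upgrades the $L^\infty(H^1)$-weak-$\star$ compactness to strong convergence in $C([0,T];L^2(\O))$ along a subsequence, which in particular passes the initial condition to the limit and identifies the pointwise-in-time $L^2$ convergence claimed in the theorem. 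The volume constraint $c_1+c_2=1$ is preserved under $L^2$ convergence, and Neumann boundary traces pass to the limit by the weak $H^2$ convergence.

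Third, \emph{passage to the limit}. In \eqref{eq:weak1}, the linear terms pass by the convergences already obtained; the only delicate term is $c_{i,\tau}\grad(\mu_{i,\tau}+\Psi_i)$, for which weak $L^2(Q_T)^d$ convergence to $c_i\grad(\mu_i+\Psi_i)$ suffices — but one must \emph{identify} the weak limit, i.e. show it equals $c_i$ times $\grad\mu_i$ for the limiting $\mu_i$. This is done by combining strong $L^2(Q_T)$ convergence of $c_{i,\tau}$ (hence strong convergence of $\sqrt{c_{i,\tau}}$) with the bound on $\sqrt{c_{i,\tau}}\grad\mu_{i,\tau}$-type quantities, writing $c_i\grad\mu_i=\sqrt{c_i}\cdot(\sqrt{c_i}\grad\mu_i)$, exactly as in degenerate Cahn--Hilliard arguments \cite{EG96}. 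For \eqref{eq:diff-mu}, the left side converges weakly in $L^2(0,T;L^{q_d})$ and the right side converges (weakly in $L^2(Q_T)$ for $\Delta c_1$, strongly for the lower-order term), giving the steepest-descent relation in the limit. The normalization \eqref{eq:ov-mu_mean} passes because $\ov\mu^n = \ov\mu_\tau$ has the form $\sum_i c_i^n\mu_i^n$ with each product converging weakly in $L^1$ and the mean being linear; here one also invokes a Poincar\'e--Wirtinger control of $\ov\mu_\tau$ itself (not just its gradient) to secure the $L^2(0,T;L^{q_d})$ bound on $\mu_{i,\tau}$, using that $\grad\ov\mu_\tau$ is controlled and that on each phase $\mu_i^n$ agrees with $\ov\mu^n$ up to terms built from $\Delta c_1^n$ and $f(c_i^n)$.

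The main obstacle I expect is the treatment of the degenerate sets $\{c_i^n=0\}$ in two linked places: (i) justifying the selection of Kantorovich potentials $\varphi_i^n$ and of the minimizer $\bc^n$ so that Lemma~\ref{lem:preEL} and the clean splitting \eqref{eq:defmu} hold — this is why the $\delta$-regularization in \eqref{eq:Ffdelta} is introduced, and one must verify that the $\delta\to0$ accumulation points inherit all estimates; and (ii) obtaining the $L^{q_d}$ bound on $\mu_{i,\tau}$ up to the degenerate set, since $\mu_i^n$ is only directly controlled where $c_i^n>0$. The resolution is to work with $\ov\mu_\tau$, which is globally defined and globally controlled, and to show $c_i\mu_i\to c_i\mu_i$ and $c_i\grad\mu_i\to c_i\grad\mu_i$ weakly while only ever using $\mu_i$ through quantities weighted by $c_i$ or through the difference $\mu_1-\mu_2$, which by \eqref{eq:diff-mu} is globally controlled via $\Delta c_1$. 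The $d=3$ case additionally forces the weaker exponent $q_d=3/2$, which must be tracked through every Sobolev embedding used in the $\mu$-estimates.
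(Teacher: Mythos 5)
Your overall architecture (energy--dissipation estimate, metric $1/2$-H\"older bound, compactness, passage to the limit in the discrete Euler--Lagrange and continuity relations, treatment of the degeneracy via $\ov\mu$) matches the paper's, and several steps --- the $L^2((0,T);L^{q_d})$ bound on $\mu_{i,\tau}$ through $\grad\ov\mu_\tau$ and Poincar\'e--Sobolev, the time-compactness via the $(H^1)'$/Wasserstein comparison --- are essentially what the paper does. However, there is one genuine gap at the heart of your plan: the $L^2((0,T);H^2(\O))$ estimate. You propose to read $\alpha\Delta c_1^n$ off the Euler--Lagrange relation of Lemma~\ref{lem:preEL} as an ``$L^2$-controlled combination of $\varphi_i^n/\tau$, $f(c_1^n)$, \dots''. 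This cannot work as stated, for three reasons. First, it is circular: the quantity $F^n$ in \eqref{eq:F^n} contains $\Delta c_1^n$, so Lemma~\ref{lem:preEL} (and the integration by parts of the Dirichlet term in its proof, which also needs the Neumann condition $\grad c_1^n\cdot\n=0$) already presupposes $c_1^n\in H^2(\O)$; in the paper, Lemma~\ref{lem:FI} is an input to Lemma~\ref{lem:linearized}, not a consequence of it. Second, the relation $F^n=0$ holds only on $\{0<c_1^n<1\}$; on the degenerate sets one has only one-sided inequalities, so $\Delta c_1^n$ is not determined there. Third, the available control on the Kantorovich potentials is the weighted bound $\sum_n\tau^{-1}\int_\O c_i^n|\grad\varphi_i^n|^2\d\x\le C$ from \eqref{eq:Kanto-grad} and \eqref{eq:square-distance}, which does not yield an unweighted $L^2$ bound on $\varphi_i^n/\tau$ (and $f(c_1^n)$ is likewise uncontrolled in $L^2$ near the degenerate set). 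The paper obtains \eqref{eq:LapL2}--\eqref{eq:dcdn} by an entirely different mechanism, the flow-interchange technique of \cite{MMS09}: one compares $\Ff_\tau^n(\bc^n)$ with $\Ff_\tau^n(\check\bc(s))$ where $\check\bc$ solves the heat equation started at $\bc^n$, bounds the metric term by the Evolution Variational Inequality for the entropy along the heat flow, and computes $\frac{\d}{\d s}\Ee(\check\bc(s))$ explicitly; the telescoping entropy differences then give the $\tau$-uniform $L^2(H^2)$ bound, the $\int|\grad\sqrt{c_i^n}|^2$ bound (needed for the $\theta_i\log c_i^n$ term in $c_i^n|\grad\mu_i^n|^2$), \emph{and} the Neumann boundary condition for each discrete minimizer. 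Without this ingredient your chain of estimates does not close.

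Two smaller points. The discrete continuity equation is not an exact identity as you wrote it: it holds up to an error of size $\tfrac12 W_i^2(c_i^n,c_i^{n-1})\|D^2\xi\|_\infty$ coming from the Taylor expansion of $\xi$ along the optimal map (Lemma~\ref{lem:weak-disc}); this error is summable by \eqref{eq:square-distance}, so your conclusion survives, but the statement needs correcting. And for the identification of the weak limit of $c_{i,\tau}\grad\mu_{i,\tau}$, writing $\sqrt{c_i}\cdot(\sqrt{c_i}\grad\mu_i)$ merely displaces the identification problem to the factor $\sqrt{c_{i,\tau}}\grad\mu_{i,\tau}$, whose weak limit is equally unidentified since $\grad\mu_{i,\tau}$ alone is not controlled; the paper resolves this by integrating by parts against smooth test fields, $\iint c_{i,\tau}\grad\mu_{i,\tau}\cdot\bphi=-\iint\mu_{i,\tau}(\grad c_{i,\tau}\cdot\bphi+c_{i,\tau}\div\bphi)$, and using the strong $L^2((0,T);W^{1,d})$ convergence of $c_{i,\tau}$ (itself a consequence of the $H^2$ bound via interpolation) against the weak $L^2(L^{q_d})$ convergence of $\mu_{i,\tau}$.
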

 {We recall that the definition of $\tau$-approximate solutions involves
  the selection of particular minimizers in \eqref{eq:JKO} unless all densities have full support.}


\section{Proof of Theorem~\ref{thm:main}}\label{sec:proof}

We first establish some estimates on the approximate solution $\bc_\tau$. 
The very classical energy estimate and some straightforward consequences are 
exposed in Section~\ref{ssec:NRJ}. In Section~\ref{ssec:pos}, we show that the 
approximate solution remains bounded away from $0$ and $1$ if the thermal diffusion coefficients 
$\theta_i$ are positive.  {Then in Section~\ref{ssec:entro}, we make use of the flow interchange technique initially introduced in~\cite{MMS09}
to get enhanced regularity estimates on the approximate solutions}. 
The Euler-Lagrange equation are then obtained in Section~\ref{sec:Euler} thanks to a linearization  {technique} 
inspired from the work of Maury {\em et al.} \cite{MRS10}. The convergence of the approximate solution  {towards a weak solution} 
is finally established in Section~\ref{sec:conv}.

\subsection{Energy and distance estimate}\label{ssec:NRJ}

By definition of $\bc^n$ in \eqref{eq:JKO}, one has $\Ff_\tau^n(\bc^n)\le\Ff_\tau^n(\bc^{n-1})$, i.e.,
\be\label{eq:NRJ}
\Ee(\bc^{n}) + \frac{1}{2\tau} \bW^2(\bc^n, \bc^{n-1}) \leq \Ee(\bc^{n-1}), \qquad \forall n \ge 1.
\ee
Summing~\eqref{eq:NRJ} over $n$, and using that $\Ee(\bc) \ge \Ee_\star > -\infty$ for all $\bc \in \bAa$, 
we obtain the square distance estimate
\be\label{eq:square-distance}
\frac1\tau\sum_{n\ge1} \bW^2(\bc^n, \bc^{n-1}) \leq 2 \left(\Ee(\bc^0) - \Ee_\star\right) < +\infty.
\ee
This readily gives the approximate $1/2$-H\"older estimate 
\be\label{eq:Holder1}
\bW(\bc^{n_2},\bc^{n_1}) \leq C \sqrt{|n_2-n_1| \tau}, \qquad \forall n_1, n_2 \ge 0.
\ee
Bearing in mind the definition~\eqref{eq:c_tau} of the approximate solution $\bc_\tau$, we get that 
\be\label{eq:Holder2}
\bW(\bc_\tau(t),\bc_\tau(s)) \leq C\sqrt{|t-s|+\tau}, \qquad \forall s,t \ge 0.
\ee
We also deduce from the energy estimate~\eqref{eq:NRJ} that 
\be\label{eq:NRJ-decay}
\Ee(\bc^{n}) \leq \Ee(\bc^0) < \infty.
\ee
We deduce that 
\be\label{eq:LinfH1}
\int_\O |\grad c_1^n|^2 \d\x \leq \frac2\a \left(\Ee(\bc^0) + \sum_{i\in\{1,2\}} \|\Psi_i\|_{L^1}\right) < \infty, \qquad \forall n \ge 0.
\ee

\subsection{Positivity of the discrete solution in presence of thermal agitation}\label{ssec:pos}

The formula \eqref{eq:mu_i^n} suggests to give a proper sense to the quantity $\theta_i \log(c_i^n)$. 
This is the purpose of the following lemma, which is an adaptation to our framework of \cite[Lemma 8.6]{Santambrogio_OTAM}.
\begin{lem}\label{lem:log(c)}
  Let $\bc^n$ be a minimizer of $\Ff_\tau^n$ as in~\eqref{eq:JKO}. Assume that $\theta_i >0$, then $c_i^n >0$ a.e. in $\O$. 
  Moreover, $\theta_i \log(c_i^n) \in L^1(\O)$.
\end{lem}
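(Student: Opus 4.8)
The plan is to adapt the proof of \cite[Lemma 8.6]{Santambrogio_OTAM} by perturbing the candidate minimizer on the set where $c_i^n$ is small and exploiting the fact that the thermal entropy $H(c)=c\log c-c+1$ has an infinite slope at $c=0$. First I would suppose, for contradiction, that the set $\{c_i^n < \varepsilon\}$ has positive measure for all small $\varepsilon>0$; I will build a competitor $\bc^\lambda$ by slightly increasing $c_i^n$ on a carefully chosen piece of this set (and correspondingly decreasing it on a piece where $c_i^n$ is bounded away from $0$, to stay in $\bAa_i$), while keeping the other phase updated via $c_1^\lambda + c_2^\lambda = 1$ so as to remain in $\bXx$. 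Along such a one-parameter family, the Dirichlet energy, the chemical energy, the exterior potential and the Wasserstein term all vary at most linearly in $\lambda$ (this uses the $H^1$-bound \eqref{eq:LinfH1}, boundedness of $c_i^n$, and the local Lipschitz dependence of $W_i^2$ under bounded mass transfers between fixed bounded sets), whereas the thermal term changes by an amount controlled from above by $\theta_i\lambda\log\varepsilon + O(\lambda)$, which tends to $-\infty$ as $\varepsilon\to0$. Hence for $\varepsilon$ small enough and $\lambda$ small enough $\Ff_\tau^n(\bc^\lambda) < \Ff_\tau^n(\bc^n)$, contradicting minimality; therefore $c_i^n \ge \varepsilon_0 > 0$... actually the cleaner conclusion is $c_i^n>0$ a.e., obtained by ruling out $|\{c_i^n=0\}|>0$.

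The technical heart is to make the Wasserstein estimate precise: I need to bound $\bW^2(\bc^\lambda,\bc^{n-1}) - \bW^2(\bc^n,\bc^{n-1})$ by $C\lambda$ uniformly. For this I would transport the extra mass $\lambda$ from the ``donor'' region to the ``receptor'' region by composing the optimal map from $\bc^n$ to $\bc^{n-1}$ with an explicit bounded rearrangement; since all regions lie in the bounded set $\O$, any such adjustment costs $O(\lambda)$ in squared distance, with a constant depending only on $\diam(\O)$ and $m_i$. The same construction (or simply lower semicontinuity together with the convexity/linearity of the relevant functionals in the transferred mass) gives the matching lower bound, so the net contribution of every term except $\Ee_{\rm therm}$ is $O(\lambda)$.

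To organize the perturbation, I would fix $\varepsilon>0$ with $|A_\varepsilon|>0$ where $A_\varepsilon=\{0\le c_i^n<\varepsilon\}$, pick a subset $A\subset A_\varepsilon$ of measure $\lambda$ (possible for $\lambda$ small), pick a ``reservoir'' subset $B\subset\{c_i^n\ge\beta\}$ of measure $\lambda$ for some fixed $\beta>0$ whose existence follows from $\int c_i^n = |\O|\ov c_i>0$, and set $c_i^\lambda = c_i^n + \1_A - \1_B$ suitably rescaled so that values stay in $[0,1]$ — concretely raise $c_i^n$ by a small constant $\eta$ on $A$ and lower it by $\eta$ on a set of equal ``mass'', then let $c_j^\lambda=1-c_i^\lambda$. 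The monotonicity $H'(s)=\log s$ yields
\[
\int_A \big(H(c_i^\lambda)-H(c_i^n)\big)\,\d\x \le \eta|A|\log\varepsilon + C\eta^2,
\]
and on $B$ the change is bounded by $C\eta$, so $\Ee_{\rm therm}(\bc^\lambda)-\Ee_{\rm therm}(\bc^n)\le \theta_i\eta|A|\log\varepsilon + C\eta$, which is negative and of order $|\log\varepsilon|$ once $\varepsilon$ is small; balancing against the $O(\eta)$ errors from the other terms closes the argument. The main obstacle I anticipate is precisely the uniform $O(\lambda)$ control of the Wasserstein term under the mass-rearrangement competitor — one must be careful that moving mass into a set where the density was essentially zero does not incur a superlinear transport cost; this is handled by noting we only move an amount $\lambda$ of mass over distances $\le\diam(\O)$. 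Once $c_i^n>0$ a.e. is established, $\theta_i\log(c_i^n)\in L^1(\O)$ follows because $H(c_i^n)\ge0$ is integrable (finite energy) while the negative part of $\log(c_i^n)$ is dominated by $|H(c_i^n)|$ plus a constant on $\{c_i^n\le 1\}$, hence also integrable.
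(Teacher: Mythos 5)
There is a genuine gap, and it is precisely in the term you dismiss most quickly: the Dirichlet energy. Your competitor $c_i^\lambda = c_i^n + \eta\,\1_A - \eta\,\1_B$ (with $c_j^\lambda = 1-c_i^\lambda$) does not belong to $H^1(\O)$, because the distributional gradient of an indicator function of a set of intermediate measure is a surface measure, not an $L^2$ function. Hence $\Ee_{\rm Dir}(\bc^\lambda)=+\infty$, the inequality $\Ff_\tau^n(\bc^n)\le\Ff_\tau^n(\bc^\lambda)$ is vacuous, and no contradiction with minimality is obtained. The $H^1$ bound \eqref{eq:LinfH1} on $c_i^n$ itself does not help here; what is needed is that the \emph{perturbation} preserves $H^1$ regularity without paying more than $O(\eta)$ in Dirichlet energy, and a characteristic-function bump cannot do that. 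Smoothing the bump does not obviously rescue the argument either, since the set $\{c_i^n<\varepsilon\}$ is merely measurable and offers no room to control $\grad$ of a mollified indicator supported inside it. The paper's proof avoids this entirely by using the global convex interpolation $c_i^\eps=\eps\,\ov c_i+(1-\eps)c_i^n$ with the constant state $\ov c_i$: this competitor is automatically in $H^1(\O)\cap[0,1]$, satisfies the volume constraint, \emph{decreases} the Dirichlet energy by convexity, and is strictly positive, so that the convexity inequality $H(c_i^n)-H(c_i^\eps)\ge(c_i^n-c_i^\eps)\log(c_i^\eps)$ can be integrated; the divergence of $-\ov c_i\log(\eps\ov c_i)\,|\{c_i^n=0\}|$ as $\eps\to0$ then forces $|\{c_i^n=0\}|=0$. (Your rearrangement bound for the Wasserstein term, by contrast, is sound: reassigning the destinations of a mass $\eta|A|$ within $\O$ costs at most $2\,\mathrm{diam}(\O)^2\eta|A|/m_i$ in squared distance.)

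The concluding step is also incorrect as stated. On $[0,1]$ the function $H(c)=c\log c-c+1$ is \emph{bounded} (by $H(0)=1$), so $\int_\O H(c_i^n)\,\d\x<\infty$ holds for every measurable $c_i^n$ with values in $[0,1]$ and carries no information; moreover $|\log c|$ is \emph{not} dominated by $H(c)$ plus a constant near $c=0$ (take $c=e^{-k}$: $H(c)\to1$ while $|\log c|=k\to\infty$). So positivity a.e. plus finiteness of the thermal energy does not yield $\log(c_i^n)\in L^1(\O)$. The integrability must instead be extracted quantitatively from the perturbation itself: the paper passes to the limit $\eps\to0$ via Fatou in the inequality $\int_\O(c_i^n-\ov c_i)\log(c_i^\eps)\,\d\x\le C/\tau$, and since $(c_i^n-\ov c_i)\log(c_i^n)\ge\tfrac{\ov c_i}{2}|\log(c_i^n)|$ on $\{c_i^n\le\ov c_i/2\}$ while the integrand is bounded below elsewhere, this forces $\log(c_i^n)\in L^1(\O)$. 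You would need to supply an analogous quantitative estimate, which your purely local perturbation does not produce.
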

\begin{proof}
With a slight abuse of notation, we denote by $\ov \bc = (\ov c_1, \ov c_2)$ the constant element of $\bXx\cap \bAa$ given by~\eqref{eq:m_i}.
  Fix $n$.
  Given $\eps \in (0,1)$, we introduce $\bc^\eps \in \bXx \cap \bAa$ by
  \be\label{eq:cieps}
  c_i^\eps = \eps \ov c_i + (1-\eps) c_i^n = c_i^n + \eps (\ov c_i - c_i^n).
  \ee
  Note that $c_i^\eps >0$ everywhere in $\O$,
  and that
  \begin{align}
    \label{eq:help001}
    c_i^\eps-\ov c_i = (1-\eps)(c_i^n-\ov c_i)
  \end{align}
  By optimality of $\bc^n$, the inequality 
  \begin{multline*}
    \Ee_{\rm therm}(\bc^n) - \Ee_{\rm therm}(\bc^\eps) \leq \Ee_{\rm Dir}(\bc^\eps) - \Ee_{\rm Dir}(\bc^n) 
    + \Ee_{\rm chem}(\bc^\eps) -  \Ee_{\rm chem}(\bc^n) \\
    + \Ee_{\rm ext}(\bc^\eps) - \Ee_{\rm ext}(\bc^n) 
    + \frac1{2\tau} \left( \bW^2(\bc^\eps, \bc^{n-1}) - \bW^2(\bc^n, \bc^{n-1}) \right)
  \end{multline*}
  holds for all $\eps \in (0,1)$.
  Then --- with generic constants $C$ that may change from line to line, but are independent of $\eps$ ---
  it follows directly from the definition of $\bc^\eps$ in \eqref{eq:cieps} that 
  $$
  \Ee_{\rm Dir}(\bc^\eps) \leq \Ee_{\rm Dir}(\bc^n), 
  $$
  that 
  $$
  \Ee_{\rm chem}(\bc^\eps) \leq  \Ee_{\rm chem}(\bc^n) + 
  \eps \int_\O ( \ov c_1 - c_1^n)(1-2 c_1^n) \d\x \leq  \Ee_{\rm chem}(\bc^n) + C \eps, 
  $$
  and that 
  $$
  \Ee_{\rm ext}(\bc^\eps) 
  =  \Ee_{\rm ext}(\bc^n) + \eps \sum_{i\in\{1,2\}}\left\langle c_i^n - \ov c_i \, , \, \Psi_i \right\rangle_{ {L^\infty(\O), L^1(\O)}} 
  \leq  \Ee_{\rm ext}(\bc^n)  + C \eps. 
  $$
  Moreover, the convexity of $\bW^2$ yields 
  $$
  \bW^2(\bc^\eps, \bc^{n-1}) 
  \leq \bW^2(\bc^n, \bc^{n-1}) + \eps \left( \bW^2(\ov \bc, \bc^{n-1}) -  \bW^2(\bc^n, \bc^{n-1})\right)
  \leq \bW^2(\bc^n, \bc^{n-1}) +C \eps.
  $$
  Combining the above inequalities, we obtain that 
  \be\label{eq:est_Ee_therm}
  \Ee_{\rm therm}(\bc^n) - \Ee_{\rm therm}(\bc^\eps) 
  =  \sum_{i\in\{1,2\}} \theta_i \int_\O (H(c_i^n) - H(c_i^\eps)) \d\x 
  \leq  {\frac{C\eps}{\tau}}. 
  \ee
  Let $i\in \{1,2\}$ be such that $\theta_i >0$, and denote by $A = \{ \x \in \O \, | \, c_i^n(\x) >0\}.$
  Then convexity of $H$ implies that 
  \be\label{eq:est_H_A}
  H(c_i^n) - H(c_i^\eps) \geq (c_i^n - c_i^\eps) \log(c_i^\eps) 
  = \eps (c_i^n - \ov c_i) \log(c_i^\eps) \quad \text{a.e. in $A$}.
  \ee
  On the other hand, since $c_i^\eps=\eps\ov c_i$ a.e. in $A^c$, 
  \be\label{eq:est_H_Ac}
  H(c_i^n) - H(c_i^\eps) = 1 - H(\eps \ov c_i) =  -\eps \ov c_i \log(\eps \ov c_i) + \eps \ov c_i \quad \text{a.e. in $A^c$}.
  \ee
  Integrating~\eqref{eq:est_H_A} over $A$ and \eqref{eq:est_H_Ac} over $A^c$ and using \eqref{eq:est_Ee_therm}, 
  we obtain  {after division by $\eps>0$}
  that 
  \be\label{eq:8.17}
  -\ov c_i \log(\eps \ov c_i) |A^c| + \int_A (c_i^n - \ov c_i) \log(c_i^\eps) \leq {\frac{C}{\tau}}. 
  \ee
  In view of \eqref{eq:help001}, we have
  \begin{align}
    \label{eq:help002}
    (c_i^n - \ov c_i) \log(c_i^\eps) \ge (c_i^n - \ov c_i) \log(\ov c_i),    
  \end{align}
  and therefore
  $$
  - \ov c_i \log(\eps \ov c_i) |A^c| + \int_A (c_i^n - \ov c_i) \log(\ov c_i) \leq  {\frac{C}{\tau}}. 
  $$
  Letting $\eps$ tend to $0$ produces a contradiction unless $|A^c| = 0$.
  Thus we have proved that $A = \O$ (up to a negligible set). 
  Moreover, in view of \eqref{eq:help002}, and since $(c_i^n - \ov c_i) \log(\ov c_i)\in L^1(\O)$, 
  Fatou's Lemma applies and leads to
  $$
  \int_\O  (c_i^n - \ov c_i) \log(c_i^n) \d\x  
  \leq \liminf_{\eps\to0} \int_\O  (c_i^n - \ov c_i) \log(c_i^\eps) \d\x 
  \leq  {\frac{C}{\tau}}. 
  $$
This latter inequality imposes that $\log(c_i^n)$ belongs to $L^1(\O)$  {for fixed $\tau>0$}.
\end{proof}
\subsection{Flow interchange and entropy estimate}\label{ssec:entro}

In the next lemma, our goal is to get an improved regularity estimate on $\bc$ by the mean of 
the flow interchange technique. 
\begin{lem}\label{lem:FI}
  There exists $C$ depending only on $\alpha, \chi, \Psi_i, \bc^0$, $m_i$, $T$, 
  such that 
  \be\label{eq:LapL2}
   {\sum_{i\in\{1,2\}} \theta_i \int_0^T \int_\O  |\grad \sqrt {c_{i,\tau}}|^2 \d\x \d t \; +}  
  \int_0^T \int_\O |\Delta \bc_\tau|^2 \d\x \d t \leq C, \qquad \forall T>0.
  \ee
  Since $\O$ is convex, it implies 
  \be\label{eq:L2H2}
  \|\bc_\tau\|_{L^2((0,T);H^2(\O))} \leq C.
  \ee
   {Moreover, there holds} 
  \be\label{eq:dcdn}
   {\grad c_i^n \cdot \n = 0 \quad \text{on }\;\p\O.}
  \ee
\end{lem}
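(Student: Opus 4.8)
The plan is to obtain the entropy-type estimate \eqref{eq:LapL2} via the flow interchange technique of \cite{MMS09}: one uses an auxiliary gradient flow as a \emph{test flow} against which the JKO minimizer $\bc^n$ is probed. The natural candidate here is the tensorized heat flow $\partial_s \bar c_i = \Delta \bar c_i$ with homogeneous Neumann boundary conditions, whose driving functional is (a multiple of) the Dirichlet energy $\mathcal G(\bc)=\tfrac12\sum_i\int_\Omega|\nabla c_i|^2\,\d\x$, and which is compatible with the mass constraints defining $\bAa$ (the heat flow preserves $\int_\Omega c_i$) and with the affine constraint $c_1+c_2=1$ (since $\Delta$ is linear, $\partial_s(\bar c_1+\bar c_2)=\Delta(\bar c_1+\bar c_2)=0$, so $\bXx$ is invariant). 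First I would recall the flow interchange inequality: if $s\mapsto \bc^n(s)$ denotes the heat flow issued from $\bc^n$, then by minimality of $\bc^n$ for $\Ff_\tau^n$,
\[
\mathcal G(\bc^{n-1}) - \mathcal G(\bc^n) \;\ge\; \limsup_{s\downarrow 0}\frac{\mathcal G(\bc^n)-\mathcal G(\bc^n(s))}{s} \;\ge\; -\frac{1}{\tau}\,\frac{\d^+}{\d s}\Big|_{s=0}\!\!\Big(\tfrac12\bW^2(\bc^n(s),\bc^{n-1})\Big) \;+\; \frac{\d^+}{\d s}\Big|_{s=0}\mathcal E(\bc^n(s)).
\]
The Wasserstein term is controlled by the Evolution Variational Inequality / displacement convexity of $\mathcal G$ along geodesics (the heat flow is the gradient flow of the Dirichlet energy, which is $0$-displacement-convex, so the map $s\mapsto \bW^2(\bc^n(s),\bc^{n-1})$ has the right one-sided derivative bound), yielding a term absorbed into $-[\mathcal G(\bc^n)-\mathcal G(\bc^{n-1})]$ after summation. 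The dissipation term $-\tfrac{\d^+}{\d s}\mathcal E(\bc^n(s))$ is computed by differentiating each component of $\mathcal E$ along the heat flow.

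Second, I would carry out that differentiation. Along $\partial_s c_i = \Delta c_i$:
the Dirichlet part contributes $\sum_i\alpha_i\int_\Omega |\Delta c_i|^2\,\d\x$ (after integrating by parts, using the Neumann condition produced by the heat semigroup), which is the good term on the left of \eqref{eq:LapL2} once one notes $\Delta c_1 = -\Delta c_2$ on $\bXx$; the chemical part $\chi\int_\Omega c_1c_2$ contributes a term $\chi\int_\Omega(\text{something})\,\Delta c_1$ bounded by $C\|\Delta c_1\|_{L^2}\le \tfrac12\|\Delta c_1\|_{L^2}^2 + C$ via Young; the thermal part $\theta_i\int_\Omega H(c_i)$ contributes $-\theta_i\int_\Omega H'(c_i)\Delta c_i = \theta_i\int_\Omega H''(c_i)|\nabla c_i|^2 = \theta_i\int_\Omega \frac{|\nabla c_i|^2}{c_i} = 4\theta_i\int_\Omega|\nabla\sqrt{c_i}|^2$, which is exactly the other good term on the left of \eqref{eq:LapL2} (here Lemma \ref{lem:log(c)} guarantees $c_i>0$ a.e. when $\theta_i>0$, so the computation is legitimate); the exterior part $\int_\Omega c_i\Psi_i$ contributes $\int_\Omega \Psi_i\Delta c_i = \int_\Omega c_i\Delta\Psi_i$ or, if $\Psi_i\in H^1$ only, $-\int_\Omega\nabla\Psi_i\cdot\nabla c_i$, bounded by $C\|\nabla c_i\|_{L^2}\le C$ using \eqref{eq:LinfH1}. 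Summing over $n$ with $n\tau\le T$ telescopes the Dirichlet differences and, using \eqref{eq:NRJ-decay} / \eqref{eq:LinfH1} to bound $\mathcal G(\bc^0)$ and the accumulated lower-order terms by $C(1+T)$, produces \eqref{eq:LapL2}. The bound \eqref{eq:L2H2} then follows from the elliptic regularity estimate $\|\bc_\tau\|_{H^2}\le C(\|\Delta\bc_\tau\|_{L^2}+\|\bc_\tau\|_{L^2})$ valid on convex domains with Neumann data (this is where convexity of $\Omega$ is used), together with the $L^\infty$ bound on $\bc_\tau$. Finally, \eqref{eq:dcdn} is the boundary condition inherited from the heat semigroup acting on $c_i^n$: more precisely, the a priori $H^2(\Omega)$ regularity just obtained, combined with $-\Delta c_1^n \in L^2(\Omega)$ and the variational characterization of the minimizer, forces the natural Neumann condition $\nabla c_i^n\cdot\n=0$ in the trace sense on $\partial\Omega$.

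The main obstacle I expect is making the flow interchange rigorous despite the low regularity and the constraint: one must justify that the heat flow stays in $\bAa\cap\bXx$ (mass and affine constraint preservation — true, but must be checked), that $\mathcal E$ is finite and the one-sided derivative formulas hold along the flow for $s>0$ small (the heat semigroup instantly regularizes $c_i^n\in H^1$ to $C^\infty$ in the interior, so the pointwise computations are valid for $s>0$ and one passes to the limit $s\downarrow 0$ using lower semicontinuity of $\mathcal E$ and monotonicity of $\mathcal G$ along the flow), and — most delicately — handling the thermal term $\int_\Omega|\nabla\sqrt{c_i}|^2$ when $c_i^n$ may be small: for $s>0$ the flow is strictly positive and smooth so the integral is finite, and Fatou's lemma as $s\downarrow0$ gives the claimed bound on $\bc_\tau$ itself. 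A secondary technical point is that $\mathcal G$ is only $0$-displacement-convex, so the Wasserstein contribution gives exactly a telescoping bound with no sign help — this is fine after summation because the left-hand side of \eqref{eq:LapL2} is the \emph{dissipation} integrated in time, bounded by the total drop of $\mathcal G$ plus the manifestly controlled lower-order contributions.
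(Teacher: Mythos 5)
There is a genuine gap at the heart of your flow--interchange argument: you have misidentified the driving functional of the auxiliary heat flow in the Wasserstein geometry. The heat semigroup is the $L^2$-gradient flow of the Dirichlet energy, but in the Wasserstein metric it is the gradient flow of the \emph{Boltzmann entropy} $\Hh(c)=\int_\O H(c)\,\d\x$, and it is $\Hh$ --- not $\Gg(\bc)=\tfrac12\sum_i\int_\O|\grad c_i|^2\d\x$ --- that is displacement convex and hence satisfies the Evolution Variational Inequality
\begin{equation*}
\frac12\,\frac{\d}{\d s}W_i^2(\check c_i(s),c_i^{n-1})\;\le\;\frac{\Hh(c_i^{n-1})-\Hh(\check c_i(s))}{m_i}.
\end{equation*}
Your claim that ``the heat flow is the gradient flow of the Dirichlet energy, which is $0$-displacement-convex, so $s\mapsto\bW^2(\bc^n(s),\bc^{n-1})$ has the right one-sided derivative bound'' is false on both counts: the Dirichlet energy is not known to be displacement convex, and even if it were, the EVI for $W_2$ holds along the \emph{Wasserstein} gradient flow of the functional in question, which for the Dirichlet energy would be a fourth-order thin-film-type equation, not the heat equation. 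Consequently your proposed telescoping in $\Gg$ is unjustified; the correct telescoping is in $\Hh$, and it is summable precisely because $0\le\Hh(c_i)\le|\O|$ for densities valued in $[0,1]$. (Your displayed ``flow interchange inequality'' is also dimensionally inconsistent: the leftmost member is an energy difference while the middle member is a rate; the correct statement divides the telescoping difference by $\tau$.) The rest of your computation --- the dissipation of each component of $\Ee$ along the heat flow, including the identification of $4\theta_i\int_\O|\grad\sqrt{c_i}|^2$ and the absorption of the chemical and exterior terms --- is correct and matches the intended argument.

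A secondary point: your derivation of \eqref{eq:L2H2} before \eqref{eq:dcdn} is mildly circular, since the convex-domain inequality $\int_\O\|\nabla^2c\|^2\le\int_\O|\Delta c|^2$ requires the homogeneous Neumann condition, which is what \eqref{eq:dcdn} asserts. The clean order is to apply that inequality to $\check c_i(s)$ for $s>0$ (where the Neumann condition holds by construction of the semigroup), deduce a uniform $H^2$ bound as $s\downarrow0$, pass to the weak $H^2$ limit to obtain both $c_i^n\in H^2(\O)$ and, by weak continuity of the trace $c\mapsto\grad c\cdot\n$, the condition \eqref{eq:dcdn}; only then does the convex-domain estimate apply to $c_i^n$ itself and yield \eqref{eq:L2H2}.
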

\begin{proof}
   {Below, $s\ge0$ denotes an auxiliary time variable.}
  Let $\check c_i$ ($i\in \{1,2\}$) be the unique solution to 
  \be\label{eq:heat}
  \begin{cases}
    \p_s \check c_i = \Delta \check c_i &\text{in}\; \O \times (0,\infty), \\
    \grad \check c_i \cdot \n = 0 &\text{on}\; \p\O \times (0,\infty),\\
    (\check c_i)_{|_{s=0}} = c_i^n &\text{in}\; \O.
  \end{cases}
  \ee
  It is easy to check that $\int_\O \check c_i(\x,s) \d\x = \int_\O c_i^0(\x) \d\x$, 
  hence each $\check \bc(s)=(\check c_1(s), \check c_2(s))$ is an admissible competitor in~\eqref{eq:JKO},
  i.e., $\Ff_\tau^n(\bc^n)\le\Ff_\tau^n(\check\bc(s))$.
  After rearraging terms in $\Ff_\tau^n$ and dividing by $s>0$, 
  the passage to the limit $s\to0$ in this inequality produces
  \begin{align}
    \label{eq:help003}
    -\operatorname*{lim\,sup}_{s\to0}\frac{\d}{\d s}\Ee(\check\bc(s)) \le
    \frac{\d}{\d s}\bigg|_{s=0}\left(\frac1{2\tau}\bW^2(\check \bc(s), \bc^{n-1})\right).
  \end{align}
  To estimate the derivative on the right hand side above,
  we use that the heat equation~\eqref{eq:heat} is the gradient flow
  of the  Boltzmann entropy functional $\Hh(c) = \int_\O H(c) \d\x$, 
  which is displacement convex. 
  Therefore, solutions $\check\bc$ to \eqref{eq:heat} satisfy 
  the {\em Evolution Variational Inequality} \cite[Definition 4.5]{AG13_UGOT} 
  centered at $c_i^{n-1}$, 
  that is
  $$
  \frac12 \frac{\d}{\d s} W_i^2(\check c_i(s), c_i^{n-1}) \leq\frac{\Hh(c_i^{n-1}) -  \Hh(\check c_i(s))}{m_i}. 
  $$
  Division by $\tau$ and summation over $i$ leads to
  \be\label{eq:EVI}
  \frac{\d}{\d s}\left( \frac1{2\tau} \bW^2(\check \bc(s), \bc^{n-1})\right) 
  \leq \sum_{i\in\{1,2\}} \frac{\Hh(c_i^{n-1}) - \Hh(\check c_i(s))}{m_i\tau}, \qquad \forall s >0.
  \ee
  The estimate for the left-hand side of \eqref{eq:help003} is more combersome.
  We estimate the $s$-derivate of the various parts of $\Ee$ individually.
  To begin with, notice that there is no contribution from $\Ee_{\rm cons}$,
  since  the constraint $\check c_1(s)+\check c_2(s)=1$ is preserved by \eqref{eq:heat}.
  For working on the  other parts of $\Ee$, 
  we use that the solution $\check c_i$ of~\eqref{eq:heat} belongs to $C^\infty((0,\infty); H^2(\O))$, 
  is positive on $\O$ for $s>0$, and satisfies homogeneous Neumann boundary conditions.
  That makes $\check c_i$ regular enough to justify the following calculations,
  and in particular the integration by parts.
  For the Dirichlet energy, we obtain
  \be\label{eq:A_Dir}
  \frac{\d}{\d s}\Ee_{\rm Dir}(\check\bc(s)) 
  =  \frac{\d}{\d s}  \frac{\a}2 \int_\O |\grad \check c_1(s)|^2 \d\x 
  = -\a \int_\O \Delta \check c_1(s) \p_s \check c_1(s) \d\x 
  = -\a  \int_\O \left|\Delta \check c_1(s) \right|^2 \d\x.
  \ee
  The derivative of the chemical energy amounts to
  \begin{align*}
    \frac{\d}{\d s}\Ee_{\rm chem}(\check\bc(s))
    = &\; \frac{\d}{\d s}  \chi  \int_\O \check c_1(s) (1-\check c_1(s)) \d\x \\
    =&\; \chi \int_\O (1-2\check c_1(s)) \p_s \check c_1(s) \d\x
       = 2\chi \int_\O |\grad \check c_1(s)|^2 \d\x,
  \end{align*}
  and since $s\mapsto\Ee_{\rm Dir}(\check\bc(s))$ is non-increasing by the previous calculation,
  we can further conclude that
  \be\label{eq:A_chem}
    \frac{\d}{\d s}\Ee_{\rm chem}(\check\bc(s))
    \le 2 \chi \int_\O |\grad c_1^n|^2 \d\x \leq C,
    \ee
  where the second inequality follows from~\eqref{eq:LinfH1}.
  Next, for the thermal energy,
  \begin{align*}
    \frac{\d}{\d s}\Ee_{\rm therm}(\check\bc(s)) 
    =&  \sum_{i\in\{1,2\}} \theta_i \frac{\d}{\d s} \int_\O H(\check c_i(s)) \d\x 
       = \sum_{i\in\{1,2\}} \theta_i \int_\O \log(\check c_i(s)) \p_s \check c_i(s) \d\x \\
    =& \sum_{i\in\{1,2\}} \theta_i \int_\O \log(\check c_i(s)) \Delta \check c_i(s) \d\x 
       = -  \sum_{i\in\{1,2\}} \theta_i \int_\O \grad \log({\check c_i(s)}) \cdot \grad \check c_i(s) \d\x,
  \end{align*}
  and since $\check c_i(t)$ is smooth on $ {\ov\O}$ and bounded away from $0$, 
  we conclude that
  \be\label{eq:A_therm}
  \frac{\d}{\d s}\Ee_{\rm therm}(\check\bc(s)) 
  = - 4 \sum_{i\in\{1,2\}} \theta_i \int_\O |\grad \sqrt{\check c_i(s)} |^2 \d\x.
  \ee
  Finally, we obtain for the derivative of the potential
  \begin{align*}
    \frac{\d}{\d s}\Ee_{\rm ext}(\check \bc(s)) 
    =&\;  \frac{\d}{\d s} \sum_{i\in\{1,2\}}  \int_\O \Psi_i \check c_i(s) \d\x 
       = \int_\O (\Psi_1 - \Psi_2) \p_s \check c_1(s) \d\x  \\
     & =  \int_\O (\Psi_1 - \Psi_2) \Delta \check c_1(s) \d\x
       \leq \; \frac{\alpha}2 \|\Delta \check c_1(s)\|_{L^2}^2 + \frac1{2\alpha} \|\Psi_1 - \Psi_2\|_{L^2}^2.
  \end{align*}
  In combination with \eqref{eq:A_Dir}, this implies
  \be\label{eq:A_ext}
  \frac{\d}{\d s}\big(\Ee_{\rm Dir}(\check \bc(s)) + \Ee_{\rm ext}(\check \bc(s)) \big)
  \leq  - \frac{\alpha}2  \|\Delta \check c_1(s)\|_{L^2}^2 + C, \qquad \forall s >0.
  \ee
     {Since the initial condition $c_i^{n}$ in \eqref{eq:heat} belongs to $H^1(\O)$,
    we have that $\check c_i(s)\to c_i^{n}$ in $H^1(\O)$ as $s\to0$.} In particular, 
    $\Hh(\check c_i(s)) \to   \Hh(c_i^n)$ as $s\to0$.
  Now we substitute \eqref{eq:EVI} and \eqref{eq:A_chem}, \eqref{eq:A_therm}, \eqref{eq:A_ext} into \eqref{eq:help003},
  obtaining
  \begin{align}
    \label{eq:help004}
    \operatorname*{lim\,sup}_{s\to0}
    \left\{ \int_\O |\Delta \check c_1(s)|^2 \d\x 
     {\;+ 
    \sum_{i\in\{1,2\}} \theta_i \int_\O \left| \grad \sqrt{\check c_i(t)} \right|^2 \d\x}
    \right\}
    \leq C\left(1+ \sum_{i\in\{1,2\}}
    \frac{\Hh(c_i^{n-1}) - \Hh( {c_i^n})}{ {m_i}\tau}\right).
  \end{align}
   {Since the initial condition $c_i^{n}$ in \eqref{eq:heat} belongs to $H^1(\O)$,
    we have that $\check c_i(s)\to c_i^{n}$ in $H^1(\O)$ as $s\to0$.}
     {Convexity of the the functionals $c\mapsto\int|\Delta c|^2\d\x$ and $c\mapsto\int|\nabla\sqrt c|^2\d\x$
    implies lower semi-continuity with respect to convergence in $H^1(\O)$, and therefore
  }
  $$
  \int_\O |\Delta c_1^n|^2 \d\x+  \sum_{i\in\{1,2\}} \theta_i \int_\O \left|\grad \sqrt{c_i^n}\right|^2 \d\x\leq C\left(1+ \sum_{i\in\{1,2\}}
    \frac{\Hh(c_i^{n-1}) - \Hh(c_i^{n})}{m_i \tau}\right), \qquad \forall n \ge 1.
  $$
   {Multiplying by $\tau$ 
    and summing over $n \in \left\{1,\dots,\left\lceil \frac T\tau \right\rceil \right\}$ 
    leads to~\eqref{eq:LapL2};
    here we use that $0\le\sum_{i\in\{1,2\}}\Hh(c_i)\le2|\Omega|$ for all measurable $c_1,c_2:\O\to[0,1]$,  
    and that $|\Delta c_2^n| = |\Delta c_1^n|$.}

   {
    Concerning the boundary condition \eqref{eq:dcdn}:
    observe that the solution $\check c_i$ to \eqref{eq:heat} satisfies in particular $\nabla\check c_i(s)\cdot\n=0$ at each $s>0$.
    Since $\O$ is convex, one can show (see \cite[Chapter 3]{Grisvard85}) that 
    \[
    \int_\O\|\nabla^2\check c_i(s)\|^2\d\x\le\int_\O|\Delta\check c_i(s)|^2\d\x.
    \]
    In combination with the fact that $\check c_i(s)$ has values in $[0,1]$ only,
    it follows that the $L^2$-norm of $\Delta\check c_i(s)$ controls the full $H^2$-norm of $\check c_i(s)$, 
    i.e.,
    \begin{align}
      \label{eq:help005}
      \|\check c_i(s)\|_{H^2(\O)}^2\le C\left(1+\int_\O|\Delta\check c_i(s)|^2\d\x\right).
    \end{align}
    On the other hand, \eqref{eq:help004} implies in particular that $\Delta\check c_i(s)$ remains bounded in $L^2$ as $s\to0$.
    By Alaoglu, and since we already know that $\check c_i(s)$ tends to $c_i^n$ in $H^1(\O)$,
    it follows that $\check c_i(s)$ converges to $c_i^{n}$ weakly in $H^2(\Omega)$ as $s\to0$.
    Now, since the trace operator mapping $c\in H^2(\O)$ to $\nabla c\cdot\n\in L^2(\p\O)$ is weakly continuous,
    we conclude that $\nabla c_i^{n}\cdot\n=0$ as well.}

   {Finally, we obtain \eqref{eq:L2H2}:
    since we have just shown that $c_i^{n}$ satisfies homogeneous Neumann boundary conditions,
    the same argument as above shows that \eqref{eq:help005} holds also with $c_i^{n}$ in place of $\check c_i(s)$.
    Summation of \eqref{eq:LapL2} over $n$ directly produces \eqref{eq:L2H2}.}
\end{proof}
\begin{coro}
  \label{cor:JKO}
   {
    At each step $n\ge1$, there exists
    among the minimizers in \eqref{eq:JKO} at least one 
    which is a weak $H^2(\O)^2$-limit point for $\delta\to0$ 
    of the set of minimizers of the functionals $\Ff_\tau^{n,\delta}$ defined in \eqref{eq:Ffdelta}.}
\end{coro}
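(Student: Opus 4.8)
The plan is to fix $n\ge1$ and, for each $\delta\in(0,1)$, produce a minimizer $\bc^{n,\delta}$ of $\Ff_\tau^{n,\delta}$, then extract from the family $\left(\bc^{n,\delta}\right)_\delta$ a sequence converging weakly in $H^2(\O)^2$ as $\delta\to0$ to some $\bc^\star$, and finally verify that $\bc^\star$ is a minimizer of $\Ff_\tau^n=\Ff_\tau^{n,0}$. To start, observe that $\bc^{n-1,\delta}\in\bAa\cap\bXx$: it has the correct masses, and $c_1^{n-1,\delta}+c_2^{n-1,\delta}=1$ because it is a convex combination of $\bc^{n-1}\in\bXx$ and the constant state $\ov\bc\in\bXx$. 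Hence Proposition~\ref{prop:JKO}, applied with $\bc^{n-1,\delta}$ in place of $\bc^{n-1}$, provides a minimizer $\bc^{n,\delta}\in\bAa\cap\bXx$ of $\Ff_\tau^{n,\delta}$. Testing against the competitor $\bc^{n-1}$ and using that $\bW^2(\bc^{n-1},\bc^{n-1,\delta})$ is bounded uniformly in $\delta$ (all densities take values in $[0,1]$ on the bounded set $\O$), we obtain $\Ee(\bc^{n,\delta})+\frac1{2\tau}\bW^2(\bc^{n,\delta},\bc^{n-1,\delta})\le C$ with $C$ independent of $\delta$.

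The crucial step is a $\delta$-uniform $H^2$-bound. The flow interchange estimate of Lemma~\ref{lem:FI} applies verbatim with $\bc^{n-1,\delta}$ in place of $\bc^{n-1}$: the heat flow $\check c_i$ issued from $c_i^{n,\delta}$ is an admissible competitor for $\Ff^{n,\delta}_\tau$, and the Evolution Variational Inequality for the Boltzmann entropy yields both $\int_\O|\Delta c_1^{n,\delta}|^2\,\d\x\le C\bigl(1+\sum_{i}\tfrac{\Hh(c_i^{n-1,\delta})-\Hh(c_i^{n,\delta})}{m_i\tau}\bigr)$ and the homogeneous Neumann condition $\grad c_i^{n,\delta}\cdot\n=0$ on $\p\O$. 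Since $0\le\Hh(c)\le|\O|$ for every measurable $c:\O\to[0,1]$, the right-hand side is bounded uniformly in $\delta$; and since $\O$ is convex and $c_i^{n,\delta}$ is valued in $[0,1]$ with vanishing normal derivative, the $L^2$-norm of $\Delta c_i^{n,\delta}$ controls the full $H^2$-norm, exactly as in~\eqref{eq:help005}. This gives a bound on $\|\bc^{n,\delta}\|_{H^2(\O)^2}$ independent of $\delta$.

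By Alaoglu we may now extract $\delta_k\to0$ with $\bc^{n,\delta_k}\rightharpoonup\bc^\star$ weakly in $H^2(\O)^2$; since $d\le3$, the compact embedding $H^2(\O)\hookrightarrow\hookrightarrow C(\ov\O)$ makes the convergence uniform, so $\bc^\star\in\bAa\cap\bXx$ and $c_i^{n,\delta_k}\to c_i^\star$ narrowly as measures. On the other hand $\bc^{n-1,\delta_k}\to\bc^{n-1}$ in every $L^p(\O)$, hence narrowly. Joint lower semicontinuity of $\bW^2$ under narrow convergence of both arguments, combined with lower semicontinuity of each component of $\Ee$ with respect to this convergence, gives $\Ff_\tau^n(\bc^\star)\le\liminf_k\Ff_\tau^{n,\delta_k}(\bc^{n,\delta_k})$. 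Conversely, for any competitor $\bc\in\bAa$ we have $\Ff_\tau^{n,\delta_k}(\bc^{n,\delta_k})\le\Ff_\tau^{n,\delta_k}(\bc)=\Ee(\bc)+\frac1{2\tau}\bW^2(\bc,\bc^{n-1,\delta_k})$, and since $W_2$ is continuous under narrow convergence on the bounded domain $\O$, the right-hand side converges to $\Ff_\tau^n(\bc)$. Hence $\Ff_\tau^n(\bc^\star)\le\Ff_\tau^n(\bc)$ for every $\bc\in\bAa$, so $\bc^\star$ is a minimizer in~\eqref{eq:JKO} which, by construction, is a weak $H^2(\O)^2$-limit point for $\delta\to0$ of minimizers of $\Ff_\tau^{n,\delta}$, as claimed.

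The main obstacle is the $\delta$-uniform $H^2$-estimate, i.e.\ ensuring that the flow-interchange argument of Lemma~\ref{lem:FI} survives the perturbation of the reference measure with a right-hand side that stays bounded as $\delta\to0$; the boundedness of the entropy $\Hh$ on $[0,1]$-valued densities is what makes this work. Once this is in hand, the compactness and lower-semicontinuity bookkeeping for the Wasserstein term with a moving base point is routine on the bounded domain $\O$.
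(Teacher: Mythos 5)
Your proof is correct and follows essentially the same route as the paper: a $\delta$-uniform $H^2$-bound obtained by rerunning the flow-interchange argument of Lemma~\ref{lem:FI} with the regularized base point $\bc^{n-1,\delta}$, followed by weak $H^2$-compactness and the identification of the limit as a minimizer of $\Ff_\tau^n$. The only cosmetic difference is that the paper invokes $\Gamma$-convergence of $\Ff_\tau^{n,\delta}$ to $\Ff_\tau^n$ in one line, whereas you prove the corresponding liminf and limsup inequalities by hand; the substance is identical.
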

\begin{proof}
  Since $\bc^{n-1,\delta}\to\bc^{n-1}$ in $L^1(\O)^2$ as $\delta\to0$,
  the $\Ff_\tau^{n,\delta}$ $\Gamma$-converge to $\Ff_\tau^n$, for instance in the topology induced by $\bW$ on $\bAa$.
  In complete analogy to the previous proof, we obtain for each minimizer $\bc^{n,\delta}$ of $\Ff_\tau^{n,\delta}$ that
  \begin{align*}
    \|c^{n,\delta}\|_{H^2}^2 \le C\left(1+\sum_{i\in\{1,2\}}\frac{\Hh(c_i^{n-1,\delta})-\Hh(c_i^{n,\delta})}{m_i\tau}\right) \le \frac C\tau.
  \end{align*}
  Therefore, there is a sequence $\bc^{n,\delta_k}$ that converges weakly in $H^2(\O)^2$ to a limit $\bc^n$
  that minimizes $\Ff_\tau^n$.
\end{proof}

\subsection{Euler Lagrange equations}\label{sec:Euler}

The goal of this section is to characterize the minimizer $\bc^n$ of~\eqref{eq:JKO}. 
 {We have anticipated the main result already in Lemma \ref{lem:preEL}.
  It is a consequence of the following linearized optimality condition for $\bc^n$.}
\begin{lem}\label{lem:linearized}
  In each step $n\ge1$,
  there exist Kantorovich potentials $\bvarphi^n= (\varphi_1^n, \varphi_2^n)$ 
  such that the quantity $F^n$ given in \eqref{eq:F^n} satisfies
  the following linearized optimality condition:
  \be\label{eq:linearized}
  \int_\O F^n c_1^n \d\x \leq 
  \int_\O F^n c_1 \d\x, \qquad \forall \bc = (c_1,c_2) \in \bAa.
  \ee
\end{lem}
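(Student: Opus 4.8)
The plan is to derive \eqref{eq:linearized} by differentiating $\Ff_\tau^n$ along admissible perturbations of the minimizer $\bc^n$ and carefully tracking each term, following the linearization strategy of Maury et al.\ \cite{MRS10}. Fix $n\ge1$ and let $\bc=(c_1,c_2)\in\bAa$ be arbitrary; since $\bc^n$ and $\bc$ both have the prescribed masses $|\O|\ov c_i$, the segment $\bc^n_t:=(1-t)\bc^n+t\bc$ stays in $\bAa$ for $t\in[0,1]$. First I would record the optimality inequality $\Ff_\tau^n(\bc^n)\le\Ff_\tau^n(\bc^n_t)$, rearrange it, divide by $t>0$, and let $t\to0^+$. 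The energy contributions are handled term by term: the Dirichlet, chemical, thermal, and exterior energies are either smooth or convex, and their one-sided derivatives at $t=0$ along the segment produce exactly $\int_\O\big(-\alpha\Delta c_1^n+\chi(1-2c_1^n)+f(c_1^n)+\Psi_1-\Psi_2\big)(c_1-c_1^n)\d\x$ after an integration by parts in the Dirichlet term, which is legitimate because Lemma~\ref{lem:FI} gives $c_1^n\in H^2(\O)$ with $\grad c_1^n\cdot\n=0$, and using $c_2-c_2^n=-(c_1-c_1^n)$ from the constraint; the thermal term requires Lemma~\ref{lem:log(c)} (when $\theta_i>0$) to make sense of $f(c_1^n)$ and the limit $\frac1t\int_\O\theta_i\big(H(c_i^n+t(c_i-c_i^n))-H(c_i^n)\big)\to\int_\O\theta_i\log(c_i^n)(c_i-c_i^n)$, controlled from above by convexity and from below by Fatou.

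The delicate term is the Wasserstein part $\frac1{2\tau}\bW^2(\bc^n_t,\bc^{n-1})$. Here I would use the Kantorovich dual formulation \eqref{eq:DP}: with $(\varphi_i^n,\psi_i^n)$ optimal potentials for the pair $(c_i^n,c_i^{n-1})$ — and these are precisely the backward Kantorovich potentials $\bvarphi^n$ whose existence and selection underlie Lemma~\ref{lem:preEL} — the pair $(\varphi_i^n,\psi_i^n)$ remains \emph{admissible} (not necessarily optimal) for the perturbed pair $(c_i^n+t(c_i-c_i^n),c_i^{n-1})$, since the constraint $\varphi_i(\x)+\psi_i(\by)\le\frac{|\x-\by|^2}{2m_i}$ does not involve the densities. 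This yields the one-sided bound
\be
\frac1{2\tau}\bW^2(\bc^n_t,\bc^{n-1})\ge\frac1{2\tau}\bW^2(\bc^n,\bc^{n-1})+\frac t{2\tau}\sum_{i\in\{1,2\}}\int_\O\varphi_i^n(c_i-c_i^n)\d\x,
\ee
so after dividing by $t$ and letting $t\to0^+$ the Wasserstein contribution to the derivative is bounded below by $\frac1{2\tau}\sum_i\int_\O\varphi_i^n(c_i-c_i^n)\d\x$; note the factor $\tfrac12$ here will combine with the $\tfrac1{2\tau}$ prefactor, and a bit of care is needed to reconcile this with the normalization $\frac{\varphi_i^n}\tau$ appearing in \eqref{eq:F^n} — in fact the cleanest route is to absorb the $\tfrac12$ by using that \eqref{eq:DP} computes $\tfrac12 W_i^2$, so the derivative of $\tfrac1{2\tau}W_i^2$ is $\tfrac1\tau\int_\O\varphi_i^n(c_i-c_i^n)\d\x$. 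Assembling all pieces, the limit of $\frac1t(\Ff_\tau^n(\bc^n_t)-\Ff_\tau^n(\bc^n))\ge0$ gives exactly $\int_\O F^n(c_1-c_1^n)\d\x\ge0$, which is \eqref{eq:linearized}.

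The main obstacle is the non-uniqueness of the Kantorovich potentials when one of the densities $c_i^n$ vanishes on a set of positive measure: the inequality above holds for \emph{any} admissible dual pair, but to get a single $F^n$ that works for \emph{all} competitors $\bc$ simultaneously one must exhibit one fixed choice of $\bvarphi^n$. This is where the selection of $\bc^n$ as a weak-$H^2$ limit point of minimizers of the regularized functionals $\Ff_\tau^{n,\delta}$ (Corollary~\ref{cor:JKO}) is used: for the regularized problems the densities $c_i^{n-1,\delta}$ are bounded away from $0$, the potentials are essentially unique, the linearized inequality holds with the associated $F^{n,\delta}$, and then one passes to the limit $\delta\to0$ — using the $H^2$ convergence of $c_i^{n,\delta}$, stability of optimal transport maps, and $L^p$ convergence of the Kantorovich potentials — to obtain a limiting $\bvarphi^n$ for which \eqref{eq:linearized} survives. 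I would also need to check that $F^n\in L^1(\O)$ (or at least that all integrals $\int_\O F^nc_1\d\x$ make sense), which follows from $\Delta c_1^n\in L^2(\O)$, boundedness of $c_1^n$, $\Psi_i\in H^1(\O)$, $f(c_1^n)\in L^1(\O)$, and $\varphi_i^n\in L^1(c_i^n)$ together with a mild integrability improvement for the Kantorovich potentials on the bounded domain $\O$.
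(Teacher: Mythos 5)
Your overall strategy --- perturbing $\bc^n$ along the segment towards a competitor, linearizing the energy by convexity/concavity, handling the Wasserstein term by Kantorovich duality, and resolving the non-uniqueness of the potentials through the $\delta$-regularized problems of Corollary~\ref{cor:JKO} --- is exactly the paper's. However, there is a genuine directional error in your treatment of the Wasserstein term, and it is precisely the step where the argument must not be taken for granted. You keep the potentials $(\varphi_i^n,\psi_i^n)$ optimal for the \emph{unperturbed} pair $(c_i^n,c_i^{n-1})$ and use their admissibility for the perturbed pair. That gives
\[
\frac12\Big(W_i^2(c_i^{n}+t(c_i-c_i^n),c_i^{n-1})-W_i^2(c_i^n,c_i^{n-1})\Big)\;\ge\; t\int_\O \varphi_i^n\,(c_i-c_i^n)\,\d\x,
\]
i.e.\ a \emph{lower} bound on the increment (the subgradient inequality). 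But the optimality of $\bc^n$ only provides $0\le \frac1t\big(\Ff_\tau^n(\bc^n_t)-\Ff_\tau^n(\bc^n)\big)$, so to conclude $\int_\O F^n(c_1-c_1^n)\,\d\x\ge0$ you need an \emph{upper} bound of the form $\frac1t\big(\Ff_\tau^n(\bc^n_t)-\Ff_\tau^n(\bc^n)\big)\le \int_\O F^n(c_1-c_1^n)\,\d\x+o(1)$. From ``$0\le A_t+B_t$, $A_t\to a$, $B_t\ge b$'' one cannot deduce $a+b\ge0$. The correct move (and the one the paper makes) is the opposite pairing: take $(\varphi_i^\eps,\psi_i^\eps)$ optimal for the \emph{perturbed} pair $(c_i^\eps,c_i^{n-1})$ and merely admissible for $(c_i^n,c_i^{n-1})$, which yields the upper bound $\frac12\big(W_i^2(c_i^\eps,c_i^{n-1})-W_i^2(c_i^n,c_i^{n-1})\big)\le \eps\int_\O\varphi_i^\eps(c_i-c_i^n)\,\d\x$, and then pass $\eps\to0$ using the uniform convergence $\varphi_i^\eps\to\varphi_i^n$ (\cite[Theorem~1.52]{Santambrogio_OTAM}). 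Your version skips the potentials $\varphi_i^\eps$ entirely and therefore also the stability-of-potentials argument that justifies the limit.

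Two smaller remarks. First, the perturbation argument should be carried out for competitors $\bc\in\bAa\cap\bXx$ (otherwise $\Ee(\bc^n_t)=+\infty$ and the convexity inequalities are vacuous); the extension to all of $\bAa$ in \eqref{eq:linearized} is a separate (density) step. Second, your description of the degenerate case is consistent with the paper: the linearized inequality is first obtained for the regularized minimizers $\bc^{n,\delta}$ with $c_i^{n-1,\delta}$ bounded away from zero, and then passed to the limit $\delta\to0$ using the weak $H^2$ convergence and the induced uniform convergence of the Kantorovich potentials; that part of your plan is sound once the directional issue above is repaired.
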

\begin{proof}
  The proof is inspired from~\cite[Lemma 3.1]{MRS10}, see also~\cite[Lemma 3.2]{CGM17}. 
  Assume first that $c_i^{n-1}>0$ almost everywhere in $\O$, so that 
  the Kantorovich potentials $\varphi_1^n,\psi_1^n$ from $c_1^n$ to $c_1^{n-1}$ are unique  {up to addition of a global constant},
  that we fix at an arbitrary value for this proof.
  For any given $\bc\in\bAa \cap \bXx$, we perturb $\bc^n$ into 
  \be\label{eq:c^eps}
  \bc^\eps = (1-\eps) \bc^n + \eps \bc, \qquad \forall \eps \in (0,1).
  \ee
  Clearly, $\bc^\eps$ belongs to $\bAa \cap \bXx$. 
  Defining $(\varphi^\eps_i,\psi_i^\eps)$ the Kantorovich potential from $c_i^\eps$ to $c_i^{n-1}$, we infer from~\eqref{eq:DP}
  that 
  \[
    \begin{cases}
      \ds \frac{1}{2}W_i^2(c_i^\eps,c_i^{n-1})=\int_\O \varphi_i^\eps(\x) c_i^\eps(\x) \d\x +\int_\O \psi_i^\eps(\by)c_i^{n-1}(\by)\d\by,\\
      \ds \frac{1}{2}W_i^2(c_i^n, c_i^{n-1}) \geq\int_\O \varphi_i^\eps(\x) c_i^n(\x) \d\x +\int_\O \psi_i^\eps(\by)c_i^{n-1}(\by)\d\by.
    \end{cases}
  \]
  Subtracting the two above relations and using the definition~\eqref{eq:c^eps} of $\bc^\eps$, one gets 
  $$
  \frac{1}{2\tau}\left( W_i^2(c_i^\eps,c_i^{n-1}) - W_i^2(c_i^n, c_i^{n-1}) \right) \leq \eps \int_\O \frac{\varphi_i^\eps}{\tau}\left( c_i- c_i^n\right) \d\x.
  $$
  Hence, using $\bc^n, \bc \in \bXx$, one gets that 
  \be\label{eq:W_eps}
  \frac{1}{2\tau}\left( \bW^2(\bc^\eps,\bc^{n-1}) - \bW^2(\bc^n, \bc^{n-1}) \right) \leq 
  \eps \int_\O \left(\frac{\varphi_1^\eps}{\tau} -\frac{\varphi_2^\eps}{\tau} \right)\left( c_1- c_1^n\right) \d\x.
  \ee
  On the other hand, the convexity of $\Ee_{\rm Dir}$ and $\Ee_{\rm therm}$, the linearity of $\Ee_{\rm ext}$, and the concavity of $\Ee_{\rm chem}$
  yield
  \begin{multline}\label{eq:Ee_eps}
    \Ee(\bc^\eps) - \Ee(\bc^n) \leq
    \eps \int_\O \alpha \grad c_1^\eps \cdot \grad (c_1 - c_1^n)  \d\x + \eps \int_\O \chi (1-2c_1^n)(c_1 - c_1^n) \d\x \\
    + \eps \int_\O  f(c_1^\eps) (c_1 - c_1^n) \d\x 
    +  \eps \int_\O \left( \Psi_1 - \Psi_2\right) (c_1 - c_1^n) \d\x.
  \end{multline}
  Bearing in mind that $c_1^n$ is a minimizer, the combination of~\eqref{eq:W_eps} with \eqref{eq:Ee_eps} leads to 
  \begin{multline*}
    0 \leq \frac{\Ff_\tau^n(\bc^\eps) - \Ff_\tau^n(\bc^n)}\eps 
    \leq  \int_\O \left(\frac{\varphi_1^\eps}{\tau} -\frac{\varphi_2^\eps}{\tau} \right)\left( c_1- c_1^n\right) \d\x + 
    \int_\O \alpha \grad c_1^\eps \cdot \grad (c_1 - c_1^n)  \d\x \\
    + \int_\O \chi (1-2c_1^n)(c_1 - c_1^n) \d\x +  \int_\O  f(c_1^\eps) (c_1 - c_1^n) \d\x 
    + \int_\O \left( \Psi_1 - \Psi_2\right) (c_1 - c_1^n) \d\x, \quad \forall \eps >0.
  \end{multline*}
  We can consider the limit $\eps \to 0$ in the right-hand side of the above expression.
  From the definition~\eqref{eq:c^eps} of $\bc^\eps$, it is clear that $c_1^\eps$ converges in 
  $H^1(\O)$ towards $c_1^n$  and that $f(c_1^\eps)$ converges in $L^1(\O)$ towards $f(c_1^n)$, 
  while $\varphi_i^\eps$ converges uniformly towards $\varphi_i^n$ (see for instance~\cite[Theorem~1.52]{Santambrogio_OTAM}),
  so that \eqref{eq:linearized} holds thanks to Lemma~\ref{lem:FI}.

  Assume now that $c_i^{n-1} = 0$ on some part of $\O$. 
   {By definition of $\bc^n$ in and after \eqref{eq:JKO}, there exists a sequence of minimizers $\bc^{n,\delta}$
    of the respective functionals $\Ff_\tau^{n,\delta}$ given in \eqref{eq:Ffdelta} that converge weakly in $H^2(\O)^2$ to $\bc^n$.
    Defining $F^{n,\delta}$ as $F^n$ in~\eqref{eq:F^n} upon replacing $c_i^{n}$ by $c_i^{n,\delta}$ 
    and $\varphi_i^n$ by the Kantorovich potential $\varphi_i^{n,\delta}$ sending $c_i^{n,\delta}$ to $c_i^{n-1,\delta}$, 
    then we get from the reasoning above that }
  \be\label{eq:F^n_delta}
  \int_\O F^{n,\delta} (c_1 - c_1^{n,\delta}) \ge 0, \qquad \forall \bc=(c_1,c_2) \in\bAa\cap \bXx.
  \ee
   {The weak convergence of the $c^{n,\delta}_i$ to $c^n_i$ in $H^2(\O)$,
    and the induced uniform convergence of the Kantorovich potentials $\varphi_i^{n,\delta}$ to Kantorovich potentials $\varphi_i^n$ 
    sending $c_i^n$ to $c_i^{n-1}$ (cf. \cite[Theorem 1.52]{Santambrogio_OTAM})
    is sufficient to deduce \eqref{eq:linearized} from \eqref{eq:F^n_delta} in the limit $\delta\to0$.}
\end{proof}
We are now in the position to prove Lemma \ref{lem:preEL} 
which has been essential for the definition of the potentials $\bmu$.
\begin{proof}[Proof of Lemma \ref{lem:preEL}]
   {Since $0 \leq c_1^n \leq 1$, the minimization problem \eqref{eq:linearized} can be solved thanks to 
    the bathtub principle \cite[Theorem 1.14]{LL01}. It amounts to saturate the sublevel sets of $F^n$ with $c_1^n=1$ until 
    all the mass $\int_\O c_i^0$ has been allocated. This} implies the existence of some $\ell \in \R$ such that 
  \be\label{eq:bathtub}
  c_1^n(\x) = \begin{cases}
    0 & \text{if}\; F^n(\x) > \ell, \\
    1 & \text{if}\; F^n(\x) < \ell,
  \end{cases}
  \qquad 
  c_2^n(\x) = \begin{cases}
    1 & \text{if}\; F^n(\x) > \ell, \\
    0 & \text{if}\; F^n(\x) < \ell.
  \end{cases}
  \ee
   {Given $F^n$, the solution $c_1^n$ to the minimization problem~\eqref{eq:linearized} is in general not unique 
    since a prescribed amount of mass can be distributed on the level set $\{F^n = \ell\}$ in different ways without changing 
    the value of the functional. But this lack of uniqueness does not affect the relations~\eqref{eq:bathtub}.}

   {Recall that we still have the freedom 
    to change the Kantorovich potential $\varphi_1^n$ in the definition \eqref{eq:F^n} of $F^n$ 
    by addition of a global constant.
    We choose that constant to enforce $\ell=0$ in \eqref{eq:bathtub} above.
    With that normalization, \eqref{eq:bathtub} implies \eqref{eq:EL}.}
\end{proof}
\begin{lem}
  \label{lem:postEL}
  The chemical potentials $\bmu$ satisfy the following $\tau$-uniform estimates:
  \be\label{eq:est-mu_tau}
  \|\mu_{i,\tau}\|_{L^2((0,T);L^{ {q_d}}(\O))} \leq C, 
  \qquad 
  \|\mu_{1,\tau}-\mu_{2,\tau}\|_{L^2(Q_T)} \leq C, 
  \qquad 
  \left\|\sqrt{c_{i,\tau}} \grad \mu_{i,\tau} \right\|_{L^2(Q_T)} \leq C. 
  \ee
\end{lem}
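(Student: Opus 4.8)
The plan is to establish the three bounds in the order they are listed, since the first feeds into the second and both feed into the third; throughout, $C$ denotes a constant independent of $\tau$, sums over $n$ run over $\{1,\dots,\lceil T/\tau\rceil\}$, and the piecewise-constant interpolant on the last subinterval is dealt with by the crude majoration $(\text{interval length})\le\tau$. The estimate on $\mu_1-\mu_2$ is read off directly from \eqref{eq:diff-mu}: since $0\le c_1^n\le1$ one has $\|\mu_1^n-\mu_2^n\|_{L^2(\O)}\le\alpha\|\Delta c_1^n\|_{L^2(\O)}+\chi|\O|^{1/2}$, so squaring, multiplying by $\tau$, summing over $n$ and invoking \eqref{eq:LapL2} (or \eqref{eq:L2H2}) gives $\|\mu_{1,\tau}-\mu_{2,\tau}\|_{L^2(Q_T)}\le C$.

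For the weighted gradient I would start from \eqref{eq:mu_i^n}, which on $\{c_i^n>0\}$ reads $\mu_i^n=-\varphi_i^n/\tau-\Psi_i-\theta_i\log c_i^n$; differentiating and multiplying by $\sqrt{c_i^n}$ (with the usual convention that $c_i^n\grad\mu_i^n$, and also $\theta_i\log(c_i^n)\grad c_i^n$, vanish on $\{c_i^n=0\}$, which is consistent since $\grad c_i^n=0$ a.e.\ there and $t\mapsto t\log t$ extends continuously by $0$) yields the pointwise identity
\[
\sqrt{c_i^n}\,\grad\mu_i^n=-\frac{\sqrt{c_i^n}\,\grad\varphi_i^n}\tau-\sqrt{c_i^n}\,\grad\Psi_i-2\theta_i\,\grad\sqrt{c_i^n}\qquad\text{a.e. in }\O .
\]
The first term is the crucial one: by \eqref{eq:Kanto-grad} its squared $L^2(\O)$-norm equals $W_i^2(c_i^n,c_i^{n-1})/(m_i\tau^2)$, so its contribution to $\|\sqrt{c_{i,\tau}}\grad\mu_{i,\tau}\|_{L^2(Q_T)}^2$ is at most $\frac1{m_i\tau}\sum_{n\ge1}\bW^2(\bc^n,\bc^{n-1})\le\frac2{m_i}\bigl(\Ee(\bc^0)-\Ee_\star\bigr)$ by \eqref{eq:square-distance}. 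The second term contributes at most $T\|\Psi_i\|_{H^1(\O)}^2$ (as $c_i^n\le1$), and the third at most $4\theta_i$ times the quantity bounded in \eqref{eq:LapL2}. This proves the third estimate, and multiplying the identity by $\sqrt{c_i^n}$ once more gives $c_i^n\grad\mu_i^n=-\tau^{-1}c_i^n\grad\varphi_i^n-c_i^n\grad\Psi_i-\theta_i\grad c_i^n$ together with $\|c_i^n\grad\mu_i^n\|_{L^2(\O)}\le\|\sqrt{c_i^n}\grad\mu_i^n\|_{L^2(\O)}$.

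For the bound on $\mu_i$ I would pass through the global chemical potential $\ov\mu^n=c_1^n\mu_1^n+c_2^n\mu_2^n$, which has zero spatial mean by \eqref{eq:ov-mu_mean}. Since $c_1^n+c_2^n=1$, $\mu_1^n=\ov\mu^n+c_2^n(\mu_1^n-\mu_2^n)$ and $\mu_2^n=\ov\mu^n-c_1^n(\mu_1^n-\mu_2^n)$; as $c_i^n\le1$ and $L^2(\O)\hookrightarrow L^{q_d}(\O)$, the first step already controls the $(\mu_1^n-\mu_2^n)$-parts, so it remains to bound $\ov\mu^n$ in $L^{q_d}(\O)$. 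The key point is that the $(F^n)_\pm$-terms of \eqref{eq:defmu} disappear from $\ov\mu^n$: by \eqref{eq:EL}, $(F^n)_+=0$ on $\{c_1^n>0\}$ and $(F^n)_-=0$ on $\{c_2^n>0\}$, so $c_1^n(F^n)_+\equiv c_2^n(F^n)_-\equiv0$ and
\[
\ov\mu^n=-\frac1\tau\bigl(c_1^n\varphi_1^n+c_2^n\varphi_2^n\bigr)-\bigl(c_1^n\Psi_1+c_2^n\Psi_2\bigr)-\theta_1\,c_1^n\log c_1^n-\theta_2\,c_2^n\log c_2^n .
\]
Each summand is a product of the bounded $H^1(\O)$ function $c_i^n$ with the Lipschitz Kantorovich potential $\varphi_i^n$, with the $H^1(\O)$ potential $\Psi_i$, or with $g(c_i^n)$ where $g(c)=c\log c$; the last lies in $H^1(\O)$ because $g'(c_i^n)\grad c_i^n=2\sqrt{c_i^n}\bigl(2\log\sqrt{c_i^n}+1\bigr)\grad\sqrt{c_i^n}$ is a bounded function times $\grad\sqrt{c_i^n}\in L^2(\O)$ (controlled by \eqref{eq:LapL2} when $\theta_i>0$, the term being absent when $\theta_i=0$). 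Hence $\ov\mu^n\in W^{1,3/2}(\O)\subset W^{1,1}(\O)$, and the Leibniz rule, combined with the identity for $c_i^n\grad\mu_i^n$ from the previous step and $\grad c_2^n=-\grad c_1^n$, gives
\[
\grad\ov\mu^n=(\mu_1^n-\mu_2^n)\,\grad c_1^n+c_1^n\,\grad\mu_1^n+c_2^n\,\grad\mu_2^n ,
\]
so $\|\grad\ov\mu^n\|_{L^1(\O)}\le\|\mu_1^n-\mu_2^n\|_{L^2(\O)}\|\grad c_1^n\|_{L^2(\O)}+|\O|^{1/2}\sum_i\|\sqrt{c_i^n}\grad\mu_i^n\|_{L^2(\O)}$. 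Since $\O$ is bounded and convex, the Poincaré–Wirtinger inequality and the embedding $W^{1,1}(\O)\hookrightarrow L^{q_d}(\O)$ — valid for $d\le3$, precisely with $q_d=2$ for $d\le2$ and $q_d=3/2$ for $d=3$ — give $\|\ov\mu^n\|_{L^{q_d}(\O)}\le C\|\grad\ov\mu^n\|_{L^1(\O)}$. Squaring, multiplying by $\tau$, summing over $n$, and using the uniform $H^1$-bound \eqref{eq:LinfH1} on $\grad c_{1,\tau}$ together with the two estimates already proved then yields $\|\mu_{i,\tau}\|_{L^2((0,T);L^{q_d}(\O))}\le C$.

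The hard part is this last estimate. The two earlier bounds are essentially bookkeeping on top of \eqref{eq:Kanto-grad}, \eqref{eq:square-distance} and \eqref{eq:LapL2}, but $\mu_i^n$ by itself need not be a Sobolev function — only $c_i^n\mu_i^n$ and $\sqrt{c_i^n}\grad\mu_i^n$ are — so one cannot differentiate $\ov\mu^n$ naively. The decisive step is the cancellation of the $(F^n)_\pm$-contributions via the Euler–Lagrange relation \eqref{eq:EL}, which exhibits $\ov\mu^n$ as a genuine $W^{1,3/2}(\O)$-function with computable gradient; one must also check that the Sobolev exponent lands exactly on $q_d$ (the borderline embedding $W^{1,1}\hookrightarrow L^{3/2}$ in dimension three).
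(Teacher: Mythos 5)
Your proof is correct and follows essentially the same route as the paper's: control $\mu_{1,\tau}-\mu_{2,\tau}$ via \eqref{eq:diff-mu} and \eqref{eq:LapL2}, control $\sqrt{c_{i,\tau}}\grad\mu_{i,\tau}$ via \eqref{eq:mu_i^n}, \eqref{eq:Kanto-grad} and \eqref{eq:square-distance}, and recover the $L^2(L^{q_d})$ bound on $\mu_{i,\tau}$ by bounding $\grad\ov\mu_\tau$ in $L^2((0,T);L^1(\O))$ and applying Poincar\'e--Sobolev with the normalization \eqref{eq:ov-mu_mean}. The only (welcome) addition is that you make explicit why $\ov\mu^n$ is a genuine Sobolev function, namely the cancellation $c_1^n(F^n)_+=c_2^n(F^n)_-=0$ coming from \eqref{eq:EL}, a point the paper uses implicitly.
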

\begin{proof}
  Recall the definition of $\bmu$ in \eqref{eq:defmu}.
  It follows from \eqref{eq:LapL2} that 
  \be\label{eq:diff-muL2}
  \iint_{Q_T} \left(\mu_{1,\tau} -\mu_{2,\tau}\right)^2 \d\x \d t\leq C. 
  \ee
  The definition of $\ov \mu^n$ implies that 
  \begin{align*}
    \grad \ov \mu^n =&  \sum_{i\in\{1,2\}} c_i^n \grad \mu_i^n + \grad c_1^n \left(  \mu_1^n - \mu_2^n \right) \\
    = & -  \sum_{i\in\{1,2\}}  \left( c_i^n \frac{\grad \varphi_i^n}{\tau} - c_i^n \grad \Psi_i - \theta_i \grad c_i^n \right) + \grad c_1^n \left(  \mu_1^n - \mu_2^n \right) .
  \end{align*}
  Using the triangle inequality,  Cauchy-Schwarz inequality, and $0 \leq c_i^n \leq 1$, we get
  \begin{multline*}
    \|\grad \ov \mu^n \|_{L^{1}(\O)}  \leq \frac1\tau\sum_{i\in\{1,2\}} \left(\int_\O c_i^n\right)^{1/2} \left(\int_\O c_i^n |\grad \varphi_i^n|^2 \d\x \right)^{1/2} \\
    + \sum_{i\in\{1,2\}} \|\grad \Psi_i\|_{L^1(\O) {^d}} + {\|\grad c_1^n\|}_{L^2(\O)^d} \left( |\O|^{1/2}(\theta_1+\theta_2) +   \|\mu_1^n-\mu_2^n\|_{L^2(\O)} \right).
  \end{multline*}
  Since $\bc^n \in \bAa$, one has $\int_\O c_i^n \d\x = \int_\O c_i^0 \d\x$. Thus it follows from~\eqref{eq:Kanto-grad}, \eqref{eq:square-distance}, \eqref{eq:LinfH1} and \eqref{eq:diff-muL2} that 
  \be
   {\|\grad \ov \mu_\tau\|^2_{L^2\left((0,T);L^1(\O)^d\right)} \leq} \sum_{n=0}^{\lceil \frac{T}{\tau} \rceil} \tau \|\grad \ov \mu^n \|_{L^{1}(\O) {^d}}^2 \leq C.
  \ee
  Bearing~\eqref{eq:ov-mu_mean} in mind, we can use the Poincar\'e-Sobolev inequality and get that 
  \be
  \|\ov \mu_\tau \|_{L^2((0,T);L^{d/(d-1)}(\O))} \leq C.
  \ee
  Since 
  $$
  \mu_1^n = \ov \mu^n + c_2^n (\mu_1^n - \mu_2^n), \qquad 
  \mu_2^n = \ov \mu^n + c_1^n (\mu_2^n - \mu_1^n),
  $$
  we deduce from~\eqref{eq:diff-muL2} the desired $L^2((0,T);L^{ {q_d}}(\O))$ estimates on the phase potentials $\mu_i^n$  {with $q_d$ defined in Definition \ref{def:weak}}.
  Finally, the combination of the relations~\eqref{eq:mu_i^n}, \eqref{eq:Kanto-grad},  {\eqref{eq:LapL2}} and \eqref{eq:square-distance} yields
  $$
  \iint_{Q_T} c_{i,\tau} \left|\grad \mu_{i,\tau} \right|^2 \d\x\d t \leq C. 
  $$
\end{proof}

The following lemma is a first step towards the recovery of the weak formulation~\eqref{eq:weak1}. 
\begin{lem}\label{lem:weak-disc}
For any $\xi \in C^2(\ov \O)$, there holds 
\be\label{eq:Otto-calculus}
\left|\int_\O \left(c_i^{n} - c_i^{n-1}\right) \xi \d\x + \tau m_i \int_\O \left( c_i^n \grad \left( \mu_i^n + \Psi_i\right)  + \theta_i \grad c_i^n\right)   \cdot \grad \xi \d\x \right| \leq  \frac12 W_i^2(c_i^{n}, c_i^{n-1}) \|D^2\xi\|_\infty.
\ee
\end{lem}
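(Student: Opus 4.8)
The plan is to expand the difference $\int_\O(c_i^n-c_i^{n-1})\xi\,\d\x$ along the optimal transport map from $c_i^n$ to $c_i^{n-1}$ and to match the resulting first-order term with the flux appearing in \eqref{eq:Otto-calculus} by invoking the definition \eqref{eq:defmu} of $\bmu^n$. The argument is carried out for a fixed $i\in\{1,2\}$ and a fixed step $n\ge1$, and — this is the point that keeps it short — it requires no integration by parts, so that no boundary term is produced; the regularity from Lemma~\ref{lem:FI} is used only to give a meaning to $\mu_i^n$.

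First I would fix the Kantorovich potentials $\bvarphi^n=(\varphi_1^n,\varphi_2^n)$ provided by Lemma~\ref{lem:preEL} (these are precisely the ones entering \eqref{eq:defmu}) and recall that $\bt_i^n:\x\mapsto\x-m_i\grad\varphi_i^n(\x)$ is the optimal map sending $c_i^n$ onto $c_i^{n-1}$. Since $\O$ is convex and both densities are supported in $\O$, for $c_i^n$-a.e.\ $\x$ the segment $[\x,\bt_i^n(\x)]$ lies in $\ov\O$; as $\xi\in C^2(\ov\O)$ is bounded, the change-of-variables formula $\int_\O\xi\,c_i^{n-1}\,\d\x=\int_\O(\xi\circ\bt_i^n)\,c_i^n\,\d\x$ holds, and a second-order Taylor expansion of $\xi$ between $\x$ and $\bt_i^n(\x)$, together with $\bt_i^n(\x)-\x=-m_i\grad\varphi_i^n(\x)$, gives
\begin{multline*}
\int_\O(c_i^n-c_i^{n-1})\xi\,\d\x
=\int_\O\big(\xi(\x)-\xi(\bt_i^n(\x))\big)c_i^n(\x)\,\d\x\\
=m_i\int_\O c_i^n\,\grad\varphi_i^n\cdot\grad\xi\,\d\x+R_i^n,
\end{multline*}
where, by \eqref{eq:W_i},
\[
|R_i^n|\le\tfrac12\|D^2\xi\|_\infty\int_\O|\x-\bt_i^n(\x)|^2c_i^n(\x)\,\d\x=\tfrac{m_i}2\|D^2\xi\|_\infty\,W_i^2(c_i^n,c_i^{n-1}),
\]
which is the right-hand side of \eqref{eq:Otto-calculus}. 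All the integrals here are finite because $0\le c_i^n\le1$ and, by \eqref{eq:Kanto-grad}, $\int_\O c_i^n|\grad\varphi_i^n|^2\,\d\x=W_i^2(c_i^n,c_i^{n-1})/m_i<\infty$, so that $c_i^n\grad\varphi_i^n\in L^2(\O)^d$.

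The second step is the algebraic identity $m_i\,c_i^n\grad\varphi_i^n=-\tau m_i\big(c_i^n\grad(\mu_i^n+\Psi_i)+\theta_i\grad c_i^n\big)$ a.e.\ in $\O$. On $\{c_i^n>0\}$ this is exactly \eqref{eq:mu_i^n}: differentiating $\mu_i^n+\Psi_i=-\varphi_i^n/\tau-\theta_i\log c_i^n$ and multiplying by $c_i^n$, using $c_i^n\grad\log c_i^n=\grad c_i^n$, gives the claim there. On $\{c_i^n=0\}$ both sides vanish: the left-hand side because of the factor $c_i^n$, and on the right-hand side $\grad c_i^n=0$ a.e.\ (since $c_i^n\in H^1(\O)$), while $c_i^n\grad(\mu_i^n+\Psi_i)$ is again killed by $c_i^n$ — in particular the contributions of $(F^n)_\pm$ in \eqref{eq:defmu} disappear, since the sign conditions \eqref{eq:EL} force $c_1^n\grad(F^n)_+=\mathbf{1}_{\{F^n>0\}}c_1^n\grad F^n=0$ and $c_2^n\grad(F^n)_-=-\mathbf{1}_{\{F^n<0\}}c_2^n\grad F^n=0$ a.e.\ in $\O$. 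Substituting this identity into the expansion above, the term $m_i\int_\O c_i^n\grad\varphi_i^n\cdot\grad\xi\,\d\x$ becomes $-\tau m_i\int_\O(c_i^n\grad(\mu_i^n+\Psi_i)+\theta_i\grad c_i^n)\cdot\grad\xi\,\d\x$, and \eqref{eq:Otto-calculus} follows.

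I do not expect a genuine obstacle here; the single point I would treat with care is the behaviour on the degenerate set $\{c_i^n=0\}$ — relevant only when $\theta_i=0$, since $\theta_i>0$ forces $c_i^n>0$ a.e.\ by Lemma~\ref{lem:log(c)}. There one must consistently read $c_i^n\grad\varphi_i^n$, $c_i^n\grad(\mu_i^n+\Psi_i)$ and $\theta_i\grad c_i^n$ as vanishing where $c_i^n$ does, and check that the $(F^n)_\pm$ terms drop out as above; once this bookkeeping is in place, nothing beyond Lemmas~\ref{lem:preEL}, \ref{lem:FI} and the dual characterisation of $W_i$ recalled in Section~\ref{ssec:Wasserstein} is needed.
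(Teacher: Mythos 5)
Your proof is correct and follows essentially the same route as the paper's: push $\xi$ forward along the optimal map $\bt_i^n=\mathrm{id}-m_i\grad\varphi_i^n$, Taylor-expand to second order, and rewrite the first-order term as the flux using \eqref{eq:mu_i^n}; your extra bookkeeping on the degenerate set $\{c_i^n=0\}$ only makes explicit what the paper leaves implicit. One cosmetic remark: your computation (like the paper's own final step) actually produces the constant $\tfrac{m_i}{2}$ rather than $\tfrac12$ in front of $W_i^2(c_i^n,c_i^{n-1})\|D^2\xi\|_\infty$, because of the $1/m_i$ weight in the definition \eqref{eq:W_i} --- a harmless discrepancy since $m_i$ is a fixed constant, but you should not assert that $\tfrac{m_i}{2}\|D^2\xi\|_\infty W_i^2$ ``is'' the stated right-hand side.
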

\begin{proof}
The optimal transport map
$$\bt^n_i(\x) = \x - m_i\grad \varphi_i^n(\x), \qquad \forall \x \in \O$$
sending $c_i^n$ to $c_i^{n-1}$ maps $\O$ into itself because $\O$ is convex.
Therefore, since $c_i^{n-1} = \bt_i^n\# c_i^n$ and thanks to~\eqref{eq:mu_i^n}, one gets that 
\begin{multline*}
\int_\O \left(c_i^{n} - c_i^{n-1}\right) \xi \d\x + \tau m_i \int_\O \left( c_i^n \grad \left( \mu_i^n + \Psi_i\right)  + \theta_i \grad c_i^n\right)   \cdot \grad \xi \d\x \\
= \int_\O  \left(\xi(\x) - \xi(\bt_i^n(\x)) - m_i \grad \xi(\x) \cdot \grad \varphi_i^n(\x)\right) c_i^n(\x)  \d\x
\end{multline*}
for all $\xi \in C^2(\ov \O)$. The Taylor expansion of $\xi$ at point $\x$ provides
$$
\left| \xi(\bt^n_i(\x)) - \xi(\x) + m_i \grad \xi(\x) \cdot \grad \varphi_i(\x) \right| \leq \frac{1}2 \|D^2\xi\|_\infty |\bt_i^n(\x)-\x|^2, \qquad \forall \x \in \O, 
$$
so that 
\begin{multline*}
\left| \int_\O (c_i^n - c_i^{n-1}) \xi \d\x +
 \tau m_i \int_\O  \left( c_i^n \grad \left( \mu_i^n + \Psi_i\right)  + \theta_i \grad c_i^n\right) \cdot \grad \xi \d\x \right|  \\
\leq \frac12 \|D^2\xi\|_\infty  \int_\O |\bt_i^n(\x)-\x|^2 c_i^n(\x) \d\x,
\end{multline*}
which is exactly the desired result. 
\end{proof}
\subsection{Convergence towards a weak solution}\label{sec:conv}

The goal of this section is to consider the limit $\tau \to 0$. This requires some compactness on the approximate phase field $\bc_\tau$ and 
on the approximate potential $\bmu_\tau$. 
In what follows, $\bAa$ is equipped with the topology corresponding to the distance $\bW$.
\begin{prop}\label{lem:compact}
There exist $\bc \in C([0,T];L^2(\O)) \cap L^2((0,T);H^2(\O))\cap L^\infty((0,T); H^1(\O))$ with $\bc(t) \in \bAa \cap \bXx$ for a.e. $t\in [0,T]$, and 
$\bmu \in L^2((0,T);L^{ {q_d}}(\O))$ such that, up to the extraction of a subsequence, the following 
convergence properties hold:
\begin{subequations}
\begin{align}
& c_{i,\tau} \underset{\tau\to0}\longrightarrow c_i \quad \text{in the}\; L^\infty((0,T);H^1(\O))\text{-weak-$\star$ sense},\\
& \| c_{i,\tau}(\cdot, t) - c_i(\cdot, t)\|_{L^2(\O)}  \underset{\tau\to0}\longrightarrow 0 \quad \text{for all}\;t\in [0,T],
\label{eq:unif-L2}\\
& c_{i,\tau} \underset{\tau\to0}\longrightarrow c_i \quad \text{in}\; L^2((0,T);W^{1,d}(\O)),\\
& c_{i,\tau} \underset{\tau\to0}\longrightarrow c_i \quad \text{weakly in}\; L^2((0,T);H^2(\O)),\\
& \mu_{i,\tau} \underset{\tau\to0}\longrightarrow \mu_i \quad \text{for the weak topology of}\; L^2((0,T);L^{ {q_d}}(\O)),\\
& c_{i,\tau} \grad \mu_{i,\tau}  \underset{\tau\to0}\longrightarrow c_i \grad \mu_i \quad \text{weakly in}\; L^2(Q_T)^d. \label{eq:c-grad-mu}
\end{align}
\end{subequations}
\end{prop}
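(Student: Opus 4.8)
The plan is to establish the six convergence properties in Proposition~\ref{lem:compact} essentially in the order they are stated, extracting a single subsequence along the way, and then to verify that the limit has the claimed regularity and lies in $\bAa\cap\bXx$ almost everywhere.

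First I would recover the compactness of $\bc_\tau$. From the energy bound \eqref{eq:LinfH1} together with the mass constraint $0\le c_i^n\le1$ and the Poincar\'e--Wirtinger inequality, the family $\left(c_{i,\tau}\right)$ is bounded in $L^\infty((0,T);H^1(\O))$, which gives the weak-$\star$ limit in that space after passing to a subsequence. From Lemma~\ref{lem:FI}, specifically \eqref{eq:L2H2}, the family is bounded in $L^2((0,T);H^2(\O))$, yielding the weak limit there. For the pointwise-in-time $L^2$ convergence \eqref{eq:unif-L2}, I would combine the uniform spatial $H^1$-bound with the approximate $1/2$-H\"older estimate \eqref{eq:Holder2} in the Wasserstein distance: since $\bW$ metrizes a topology weaker than $L^2$ but the $H^1$-bound provides compactness in $L^2$, a refined Arzel\`a--Ascoli / Aubin--Lions--Simon type argument (e.g. following the abstract compactness lemma of Rossi--Savar\'e) shows equicontinuity in time with values in $L^2(\O)$ and hence uniform convergence on $[0,T]$ to a limit $c_i\in C([0,T];L^2(\O))$. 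The strong convergence in $L^2((0,T);W^{1,d}(\O))$ then follows by interpolation between the weak $L^2H^2$ bound and the strong $L^\infty_t L^2_x$ (hence $L^2_{t,x}$ strong) convergence, using that in dimension $d\le3$ the embedding $H^2\hookrightarrow W^{1,d}$ is such that an interpolation inequality $\|v\|_{W^{1,d}}\lesssim\|v\|_{L^2}^{\sigma}\|v\|_{H^2}^{1-\sigma}$ holds with $\sigma>0$, converting strong $L^2_{t,x}$ plus bounded $L^2_tH^2_x$ into strong $L^2_tW^{1,d}_x$.

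Next I would treat the potentials. Lemma~\ref{lem:postEL} gives a $\tau$-uniform bound on $\mu_{i,\tau}$ in $L^2((0,T);L^{q_d}(\O))$, so after a further subsequence $\mu_{i,\tau}\rightharpoonup\mu_i$ weakly in that space; since $q_d\ge\frac32>1$ the space is reflexive and this is legitimate, and the normalization $\int_\O\ov\mu^n\,\d\x=0$ passes to the limit. For the product $c_{i,\tau}\grad\mu_{i,\tau}$, the bound $\left\|\sqrt{c_{i,\tau}}\grad\mu_{i,\tau}\right\|_{L^2(Q_T)}\le C$ from Lemma~\ref{lem:postEL}, together with $0\le c_{i,\tau}\le1$, gives $\left\|c_{i,\tau}\grad\mu_{i,\tau}\right\|_{L^2(Q_T)}=\left\|\sqrt{c_{i,\tau}}\cdot\sqrt{c_{i,\tau}}\grad\mu_{i,\tau}\right\|_{L^2(Q_T)}\le C$, so up to a subsequence $c_{i,\tau}\grad\mu_{i,\tau}\rightharpoonup\boldsymbol g_i$ weakly in $L^2(Q_T)^d$. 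Identifying $\boldsymbol g_i=c_i\grad\mu_i$ is the delicate point: I would pair $c_{i,\tau}\grad\mu_{i,\tau}$ against a test field $\boldsymbol\eta\in C^\infty_c(Q_T)^d$, integrate by parts to move the gradient off $\mu_{i,\tau}$ onto $c_{i,\tau}\boldsymbol\eta$ (using the $L^2_tH^2_x$ hence $L^2_tW^{1,d}_x$ strong convergence of $c_{i,\tau}$ and the $L^2_tL^{q_d}_x$ weak convergence of $\mu_{i,\tau}$, with $\tfrac1{q_d}+\tfrac1d<1$ so the duality is well posed), then integrate by parts back. Care must be taken with the boundary term $\int c_{i,\tau}\mu_{i,\tau}\,\boldsymbol\eta\cdot\n$, which vanishes for compactly supported $\boldsymbol\eta$; a density argument then extends the identification. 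Finally, for a.e.\ $t$, $\bc(t)\in\bAa$ follows since the mass constraint is linear and stable under $L^2$-convergence, $\bc(t)\in\bXx$ since $c_{1,\tau}+c_{2,\tau}\equiv1$ passes to the limit and the bound $0\le c_i\le1$ is preserved, and the regularity $\bc\in C([0,T];L^2)\cap L^2((0,T);H^2)\cap L^\infty((0,T);H^1)$ has been obtained above.

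The main obstacle I anticipate is the identification of the weak $L^2(Q_T)^d$ limit of $c_{i,\tau}\grad\mu_{i,\tau}$ as $c_i\grad\mu_i$, precisely because $\mu_{i,\tau}$ converges only weakly (and only in $L^{q_d}$ with $q_d$ possibly $<2$ when $d=3$), so one cannot simply multiply a weak limit by a weak limit. The resolution hinges on having \emph{strong} convergence of $c_{i,\tau}$ in a space dual-compatible with $L^{q_d}$ — this is exactly why the strong $L^2((0,T);W^{1,d}(\O))$ convergence of $\bc_\tau$ (itself a consequence of the flow-interchange estimate \eqref{eq:L2H2}) is needed — and on a careful integration-by-parts manoeuvre that trades the derivative between the two factors. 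A secondary technical point is the time-continuity estimate leading to \eqref{eq:unif-L2}, where the mismatch between the $\bW$-H\"older bound and the $L^2$ topology must be bridged via a compactness argument rather than a direct interpolation.
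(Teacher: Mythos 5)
Your proposal is correct and follows essentially the same route as the paper: the same a priori bounds, the Wasserstein H\"older estimate \eqref{eq:Holder2} combined with a refined Arzel\`a--Ascoli argument to get $C([0,T];L^2(\O))$ compactness (the paper makes this concrete by first converting \eqref{eq:Holder2} into an $(H^1)'$ modulus of continuity and interpolating against the $H^1$ bound), interpolation between strong $L^2(Q_T)$ convergence and the weak $L^2((0,T);H^2(\O))$ bound to obtain strong convergence in $L^2((0,T);W^{1,d}(\O))$, and the same integration-by-parts identification of the weak $L^2(Q_T)^d$ limit of $c_{i,\tau}\grad\mu_{i,\tau}$. The only cosmetic slip is that the relevant exponents satisfy $\tfrac1{q_d}+\tfrac1d=1$ (exact conjugacy, e.g. $\tfrac23+\tfrac13$ for $d=3$) rather than a strict inequality, which is exactly what the weak--strong pairing in \eqref{eq:poipoi_1}--\eqref{eq:poipoi_2} requires.
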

\begin{proof}
All the convergence properties stated below occur up to the extraction of a subsequence when $\tau$ tends to $0$. 
We deduce from Estimate~\eqref{eq:LinfH1} that the family $\left(\bc_\tau\right)_{\tau>0}$ is bounded in $L^\infty((0,T);H^1(\O))$. 
Hence we can assume that 
$\bc_\tau$ tends to some $\bc$ in the $L^\infty((0,T);H^1(\O))$-weak-$\star$ sense. Moreover, since $0 \leq c_{i,\tau} \leq 1$, we also have that 
$0 \leq c_i \leq 1$ a.e. in $Q_T$. 
We also infer from Estimate~\eqref{eq:L2H2} that $\bc_\tau$ converges weakly in $L^2((0,T);H^2(\O))$ towards $\bc$.

As a consequence of the $L^\infty$ bound on $c_{i,\tau}$ and of the Benamou-Brenier formula, we get that 
$$
\|c_i^{(1)} - c_i^{(2)}\|_{H^1(\O)'} \leq \frac1{m_i} W_i (c_i^{(1)}, c_i^{(2)}), \qquad \forall c_i^{(1)}, c_i^{(2)} \in \Aa_i,
$$
(see more precisely \cite[Lemma 3.4]{Santambrogio_OTAM}). Therefore, we infer from \eqref{eq:Holder2} that 
$$
\|c_{i,\tau}(t) - c_{i,\tau}(s)\|_{H^1(\O)'} \leq C \sqrt{|t-s|+\tau}, \qquad \forall s,t \in [0,T].
$$
Let $\dt>0$ and let $t \in [0,T-\dt]$, then 
$$
\|c_{i,\tau}(t + \dt) - c_{i,\tau}(t) \|_{L^2(\O)}^2 \, {\leq}\, \|c_{i,\tau}(t + \dt) - c_{i,\tau}(t) \|_{H^1}\|c_{i,\tau}(t + \dt) - c_{i,\tau}(t) \|_{(H^1)'}
\leq C \sqrt{\dt+\tau}.
$$
Bearing in mind the $L^\infty((0,T);H^1(\O))$ estimate on $c_{i,\tau}$, we can apply a refined version of the Arzel\`a-Ascoli theorem 
\cite[Prop. 3.3.1]{AGS08} 
to obtain that $c_i \in C([0,T];L^2(\O))$ and that 
$$
c_{i,\tau}(t)  \underset{\tau\to0}\longrightarrow c_i(t) \quad \text{in}\; L^2(\O) \; \text{for all}\; t \in [0,T].
$$
Together with the estimate $0 \leq c_{i,\tau} \leq 1$, we deduce that $\bc \in C([0,T];L^p(\O))^2$ and that 
$$
\bc_{\tau}  \underset{\tau\to0}\longrightarrow \bc \quad \text{in}\; L^p(Q_T)^2, \quad \forall p \in [1,+\infty).
$$
This implies in particular some strong convergence in $L^2(Q_T)$, which can be combined with the weak convergence 
in $L^2((0,T);H^2(\O))$ thanks to interpolation arguments to derive some strong convergence in $L^2((0,T);H^s(\O))$ for 
any $s < 2$. The continuous embedding of $H^s(\O)$ into $W^{1,d}(\O)$ when $s \geq1+\max(0,\frac{d-2}2)$ (see for instance \cite[Theorem 6.7]{NPV12}) 
ensures that 
\be\label{eq:conv-W1d}
\bc_\tau\underset{\tau\to0}\longrightarrow \bc \quad \text{in}\; L^2((0,T);W^{1,d}(\O))^2.
\ee

Let us switch to the phase potentials $\bmu$.
Thanks to Lemma~\ref{lem:postEL}, we have (uniform w.r.t. $\tau$) $L^2((0,T);L^{ {q_d}}(\O))$ estimates on $\mu_{i,\tau}$. Hence
there exists $\bmu$ in $L^2((0,T);L^{ {q_d}}(\O))^2$ such that 
\be\label{eq:conv-mu_i}
\mu_{i,\tau} \underset{\tau\to0}\longrightarrow \mu_i \quad \text{weakly in}\; L^2((0,T);L^{ {q_d}}(\O)).
\ee
In Lemma~\ref{lem:postEL}, we also established a (uniform w.r.t. $\tau$) $L^2(Q_T)^d$ estimate on $\left(\sqrt{c_{i,\tau}} \grad \mu_{i,\tau} \right)_{\tau>0}$.
Since $0 \leq c_{i,\tau}\leq 1$, it implies a uniform $L^2(Q_T)^d$ estimate on $\left({c_{i,\tau}} \grad \mu_{i,\tau} \right)_{\tau>0}$.
Therefore, there exists $\boldsymbol\vartheta_i \in L^2(Q_T)^d$ such that ${c_{i,\tau}} \grad \mu_{i,\tau}$ converges weakly in 
$L^2(Q_T)^d$ to $\boldsymbol\vartheta_i$ as $\tau$ tends to 0. It remains to show that $\boldsymbol\vartheta_i = c_i \grad \mu_i$. 
First, the distributions $c_i$ and $\grad \mu_i$ can be multiplied since $c_i$ belongs to $L^2((0,T);W^{1,d}(\O))$ and $\grad \mu_i$ belongs 
to $L^2((0,T);W^{-1, {q_d}}(\O))$  {with $q_d\leq \frac{d}{d-1}$}. Moreover, for all $\bphi \in C^\infty_c(Q_T)^d$, one has 
\be\label{eq:poipoi_1}
\iint_{Q_T} c_{i,\tau} \grad \mu_{i,\tau} \cdot \bphi \d\x \d t = - \iint_{Q_T} \mu_{i,\tau} \left( \grad c_{i,\tau} \cdot \bphi + c_{i,\tau} \div \bphi \right) \d\x \d t.
\ee
Thanks to~\eqref{eq:conv-W1d} and \eqref{eq:conv-mu_i}, we can pass in the limit in the right-hand side of the above expression. This leads to 
\be\label{eq:poipoi_2}
\iint_{Q_T} c_{i,\tau} \grad \mu_{i,\tau} \cdot \bphi \d\x \d t \underset{\tau\to0} \longrightarrow 
- \iint_{Q_T} \mu_{i} \left( \grad c_{i} \cdot \bphi + c_{i} \div \bphi \right) \d\x \d t = \langle c_{i} \grad \mu_{i} \; , \; \bphi \rangle_{\Dd', \Dd}.
\ee
As a consequence, $\boldsymbol \vartheta_i =  c_{i} \grad \mu_{i}$ in the distributional sense, thus also in $L^2(Q_T)$.

Since $\bc_\tau(t)$ converges in $L^2(\O)^2$ towards $\bc(t)$ for all $t\in[0,1]$, there holds 
$$
\int_\O c_i(t) \d\x = \int_\O c_i^0 \d\x, \qquad c_1(\x,t) + c_2(\x,t) = 1 \quad \text{for all} \; t \in [0,T].
$$
Moreover, since $\bc(t)$ belongs to $H^1(\O)$ for a.e. $t \in (0,T)$, $\bc(t)$ belongs to $\bAa \cap \bXx$ for a.e. $t \in (0,T)$.
This concludes the proof of Proposition~\ref{lem:compact}.
\end{proof}

We have all the necessary convergence properties to pass to the limit $\tau\to 0$ and to identify the limit $(\bc, \bmu)$ exhibited in Proposition~\ref{lem:compact} 
as a weak solution in the sense of Definition~\ref{def:weak}. 

\begin{prop}\label{prop:identify}
Let $(\bc, \bmu)$ be as in Proposition~\ref{lem:compact}, then $(\bc, \bmu)$ is a weak solution to the problem~\eqref{eq:syst-PDE}, \eqref{eq:initial}--\eqref{eq:BC} 
in the sense of Definition~\ref{def:weak}.
\end{prop}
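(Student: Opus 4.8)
The plan is to verify, one after another, the six requirements of Definition~\ref{def:weak} for the limit pair $(\bc,\bmu)$ produced by Proposition~\ref{lem:compact}, using only the convergences and a priori estimates already established. First I would dispatch the "soft" items. The regularity of $\bc$ and of $\bmu$, together with $c_i\grad\mu_i\in L^2(Q_T)^d$, is exactly the content of Proposition~\ref{lem:compact} (applied for an arbitrary finite horizon $T$, with a routine diagonal extraction to reach $L^2_{\rm loc}(\R_+;\cdot)$). The volume constraint $c_1+c_2=1$ and the identity $c_i(\cdot,0)=c_i^0$ are immediate from the pointwise-in-time $L^2$ convergence \eqref{eq:unif-L2}. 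For the Neumann condition $\grad c_i\cdot\n=0$ on $\p\O$, I would combine the discrete relation \eqref{eq:dcdn} with the weak $L^2((0,T);H^2(\O))$ convergence of $c_{i,\tau}$ and the weak continuity of the normal-trace operator $H^2(\O)\to L^2(\p\O)$. For the normalization $\int_\O(c_1\mu_1+c_2\mu_2)\d\x=0$ I would start from \eqref{eq:ov-mu_mean}, write $\ov\mu_\tau=\mu_{2,\tau}+c_{1,\tau}(\mu_{1,\tau}-\mu_{2,\tau})$, and pass to the limit against an arbitrary time weight $\eta\in C_c(0,T)$, using that $c_{1,\tau}\to c_1$ boundedly and a.e.\ while $\mu_{2,\tau}\rightharpoonup\mu_2$ and $\mu_{1,\tau}-\mu_{2,\tau}\rightharpoonup\mu_1-\mu_2$ weakly in $L^2(Q_T)$ (the product of a bounded a.e.-convergent factor with an $L^2$-weakly convergent one converges weakly in $L^2$).

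The heart of the proof is the continuity equation \eqref{eq:weak1}. Here I would take the discrete weak formulation of Lemma~\ref{lem:weak-disc} and sum it over time steps: given $\xi\in C^2(\ov\O)$ and $0\le t_1\le t_2$, set $n_j=\lceil t_j/\tau\rceil$ and sum over $n\in\{n_1+1,\dots,n_2\}$. The first term telescopes to $\int_\O(c_i^{n_2}-c_i^{n_1})\xi\d\x$, the flux term becomes $m_i\int_{n_1\tau}^{n_2\tau}\!\int_\O\big(c_{i,\tau}\grad(\mu_{i,\tau}+\Psi_i)+\theta_i\grad c_{i,\tau}\big)\cdot\grad\xi\,\d\x\d t$, and the right-hand side is controlled by $\tfrac12\|D^2\xi\|_\infty\sum_{n=n_1+1}^{n_2}W_i^2(c_i^n,c_i^{n-1})\le C\tau\|D^2\xi\|_\infty\to0$ thanks to the square-distance estimate \eqref{eq:square-distance}. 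To pass to the limit I would argue that $c_i^{n_j}\to c_i(t_j)$ in $L^2(\O)$ (using \eqref{eq:Holder2} and the $L^\infty H^1$ bound to get $\|c_i^{n_j}-c_{i,\tau}(t_j)\|_{L^2}\le C\sqrt\tau$, then \eqref{eq:unif-L2}); that $c_{i,\tau}\grad\mu_{i,\tau}\rightharpoonup c_i\grad\mu_i$ weakly in $L^2(Q_T)^d$ by \eqref{eq:c-grad-mu}, $c_{i,\tau}\grad\Psi_i\to c_i\grad\Psi_i$ strongly in $L^2(Q_T)^d$ by dominated convergence, and $\grad c_{i,\tau}\rightharpoonup\grad c_i$ weakly in $L^2(Q_T)^d$; and that $\mathbf{1}_{(n_1\tau,n_2\tau)}\grad\xi\to\mathbf{1}_{(t_1,t_2)}\grad\xi$ strongly in $L^2(Q_T)^d$. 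Pairing each weakly convergent factor against the corresponding strongly convergent one then yields exactly \eqref{eq:weak1}.

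The remaining item, the steepest-descent relation \eqref{eq:dmu}, is easy: \eqref{eq:diff-mu} holds pointwise for every $n$, so $\mu_{1,\tau}-\mu_{2,\tau}=-\alpha\Delta c_{1,\tau}+\chi(1-2c_{1,\tau})$ a.e.\ in $Q_T$, and since $\mu_{1,\tau}-\mu_{2,\tau}\rightharpoonup\mu_1-\mu_2$ and $\Delta c_{1,\tau}\rightharpoonup\Delta c_1$ weakly in $L^2(Q_T)$ while $c_{1,\tau}\to c_1$ strongly, the identity passes to the limit. I expect the only genuinely delicate point in the whole argument to be the passage to the limit in the flux term of \eqref{eq:weak1}, which rests entirely on the weak $L^2(Q_T)^d$ convergence $c_{i,\tau}\grad\mu_{i,\tau}\rightharpoonup c_i\grad\mu_i$; that convergence, however, has already been secured in Proposition~\ref{lem:compact}, so what is left is bookkeeping — in particular keeping track of the $O(\sqrt\tau)$ errors created by replacing $t_1,t_2$ with the grid points $n_1\tau,n_2\tau$.
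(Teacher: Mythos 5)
Your proposal is correct and follows essentially the same route as the paper: the soft items (regularity, constraint, boundary and initial data, the relation $\mu_1-\mu_2=-\alpha\Delta c_1+\chi(1-2c_1)$) are read off from the convergences of Proposition~\ref{lem:compact}, and the continuity equation is obtained by summing the discrete estimate of Lemma~\ref{lem:weak-disc} over time steps, bounding the remainder by $C\tau$ via \eqref{eq:square-distance}, and passing to the limit with the weak convergence \eqref{eq:c-grad-mu}. Your treatment is if anything slightly more complete than the paper's, since you also spell out the passage to the limit in the normalization $\int_\O\ov\mu\,\d\x=0$, which the paper leaves implicit.
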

\begin{proof}
Since $c_{i,\tau}$ and $\mu_{i,\tau}$ tend weakly  in $L^2((0,T);H^2(\O))$ and $L^2((0,T);L^{ {q_d}}(\O))$ towards $c_{i}$ and $\mu_i$ respectively, 
we can pass to the limit in~\eqref{eq:diff-mu}. Moreover, one can also pass to the limit in the relation $0 = \grad c_{1,\tau}\cdot \n$ established in Lemma~\ref{lem:FI}, 
leading to $\grad c_1 \cdot \n = 0$ on $\p\O$. 

It only remains to recover the weak formulation~\eqref{eq:weak1}. Let $t_1, t_2 \in [0,T]$ with $t_2 \ge t_1$, then summing~\eqref{eq:Otto-calculus} over 
$n \in \left\{\left\lceil \frac{t_1}\tau \right\rceil + 1, \dots, \left\lceil \frac{t_2}\tau \right\rceil \right\}$ yields 
\begin{multline*}
\left|\int_\O \left(c_{i,\tau}(t_2) - c_{i,\tau}(t_1) \right) \xi \d\x + m_i \int_{\left\lceil \frac{t_1}\tau \right\rceil \tau}^{\left\lceil \frac{t_2}\tau \right\rceil \tau} \int_\O 
\left(c_{i,\tau} \grad \left(\mu_{i,\tau}+ \Psi_i\right) + \theta_i \grad c_{i,\tau}\right)\cdot \grad \xi\d\x \d t \right| \\
 \leq \frac12 \|D^2\xi\|_\infty \sum_{n=\left\lceil \frac{t_1}\tau \right\rceil}^{\left\lceil \frac{t_2}\tau \right\rceil} 
W_i^2(c_i^n, c_i^{n-1}) \leq C \tau,
\end{multline*}
the last inequality being the consequence of the squared distance estimate~\eqref{eq:square-distance}.
We can pass to the limit $\tau \to 0$ in the above relation thanks to Proposition~\ref{lem:compact}. 
\end{proof}
We have finally proved Theorem~\ref{thm:main} that is a 
combination of Propositions~\ref{lem:compact} and~\ref{prop:identify}.

\subsection*{Acknowledgements}  
 {The authors warmly thank the referees for their valuable remarks and suggestions.}
This research was supported by the DFG Collaborative Research Center TRR 109, ``Discretization in Geometry and Dynamics" and by 
the French National Research Agency (ANR) through grants ANR-13-JS01-0007-01 (project GEOPOR) and ANR-11-LABX-0007-01 (Labex CEMPI).


\end{document}